\newcommand{\dist}{\stackrel{{\rm d}}{=}}
\newcommand{\dto}{\stackrel{{\rm d}}{\longrightarrow}}
\renewcommand{\bar}{\overline}
\newcommand{\bC}{\ensuremath{\mathbb{C}}}
\newcommand{\bE}{\ensuremath{\mathbb{E}}}
\newcommand{\bN}{\ensuremath{\mathbb{N}}}
\newcommand{\bP}{\ensuremath{\mathbb{P}}}
\newcommand{\bR}{\ensuremath{\mathbb{R}}}
\newcommand{\bS}{\ensuremath{\mathbb{S}}}
\newcommand{\cB}{\ensuremath{\mathcal{B}}}
\newcommand{\cF}{\ensuremath{\mathcal{F}}}
\newcommand{\cL}{\ensuremath{\mathcal{L}}}
\newcommand{\cP}{\ensuremath{\mathcal{P}}}
\newcommand{\cR}{\ensuremath{\mathcal{R}}}
\theoremstyle{plain}
\newtheorem{Thm}{Theorem}[section]
\newtheorem{Lem}[Thm]{Lemma}
\newtheorem{Prop}[Thm]{Proposition}
\newtheorem{Cor}[Thm]{Corollary}
\theoremstyle{definition}
\newtheorem{Ass}[Thm]{Assumption}
\newtheorem{Def}[Thm]{Definition}
\newtheorem{Rem}[Thm]{Remark}
\newtheorem{Ex}[Thm]{Example}
\numberwithin{equation}{section}
\newcommand{\bn}{\mbox{\boldmath $n$}}
\newcommand{\rd}{\ensuremath{\mathrm{d}}}
\begin{document}

\begin{center}
{\Large \bf 
Functional limit theorem for occupation time processes of intermittent maps
}
\end{center}
\begin{center}
Toru Sera
\end{center}
%\begin{center}
%{\small \today}
%\end{center}

\begin{abstract}
We establish a functional limit theorem for the joint-law of 
occupations near and away from indifferent fixed points of interval maps, and of waits for the occupations away from these points, in the sense of strong distributional convergence.
It is a functional and joint-distributional extension of Darling--Kac type limit theorem, of Lamperti type generalized arcsine laws for occupation times, and of Dynkin and Lamperti type generalized arcsine laws for waiting times, at the same time. 
\end{abstract}

%\tableofcontents

%%%%% text %%%%%

\section{Introduction} 
In statistical physics (see for example Pomeau--Manneville \cite{PM} and Manneville \cite{M}), interval maps with indifferent fixed points have been studied as models of intermittent phenomena, such as intermittent transitions to turbulence in convective fluid. In this context, the occupations near the indifferent fixed points correspond to \emph{laminar} phases, while the occupations away from them correspond to \emph{turbulent} bursts.

In the present paper, we study the time evolution of the occupations near and away from these points, and of the waits for the occupations away from these points. We obtain the scaling limit of the time evolution in the sense of strong distributional convergence.
Our limit theorem is a functional and joint-distributional extension  of Darling--Kac type limit theorems \cite{A81, A86, TZ, Z07, KZ, OSb}, of Lamperti type generalized arcsine laws for occupation times \cite{T02, TZ, Z07, SY}, and of Dynkin and Lamperti type generalized arcsine laws for waiting times \cite{T98, TZ, Z07, KZ}, at the same time.

Let us illustrate earlier studies.
Following \cite[Examples]{T02}, we define an interval map $T:[0,1]\to[0,1]$ by 
\begin{align}\label{boole}
	Tx
	=
	\frac{x(1-x)}{1-x-x^2}
	\quad
	\bigg(0\leq x \leq \frac{1}{2}\bigg),
	\quad
	Tx=1-T(1-x)
	\quad
	\bigg(\frac{1}{2}<x\leq 1\bigg).
\end{align} 
See Figure \ref{fig:graph}. Note that $0$ and $1$ are  \emph{indifferent fixed points} of $T$, that is, 
 $T0=0$, $T1=1$ and $T'0=T'1=1$.
In addition, we have $Tx= x+x^3+o(x^3)$, as $x\to 0$. This map is conjugate to \emph{Boole's transformation} \cite{AW}, which is a typical example of infinite ergodic transformations.
The map $T$ has an ergodic invariant measure 
\begin{align}
	\mu(\rd x)=(x^{-2}+(1-x)^{-2})\rd x, \quad 0<x<1,
\end{align}   
whose total mass is infinite. Let us fix $\delta\in(0,1/2)$ from now on.
With respect to $\mu$, the interval $[\delta,1-\delta]$ has finite mass and its complement $[0,\delta)\cup(1-\delta,1]$ has infinite mass. 
Birkhoff's pointwise ergodic theorem implies
\begin{align}
	\frac{1}{n}\sum_{k=1}^{n}\mathbbm{1}_{\{T^k x\in [\delta,1-\delta]\}}
	\underset{n\to\infty}{\to} 0
	\quad
	\bigg(\text{or equivalently,}\;\; \frac{1}{n}\sum_{k=1}^{n}\mathbbm{1}_{\{T^k x\notin [\delta,1-\delta]\}}
	\underset{n\to\infty}{\to} 1 \bigg), \quad
	\text{a.e.$x$.}
\end{align}
Roughly speaking, the orbit $(x,Tx,T^2x,\dots)$ of almost every initial point $x$ is concentrated close to $0$ and $1$. See Figure \ref{fig:orbit}.
\begin{figure}
\begin{minipage}{0.3\hsize}
\centering
	\includegraphics[height=4cm]{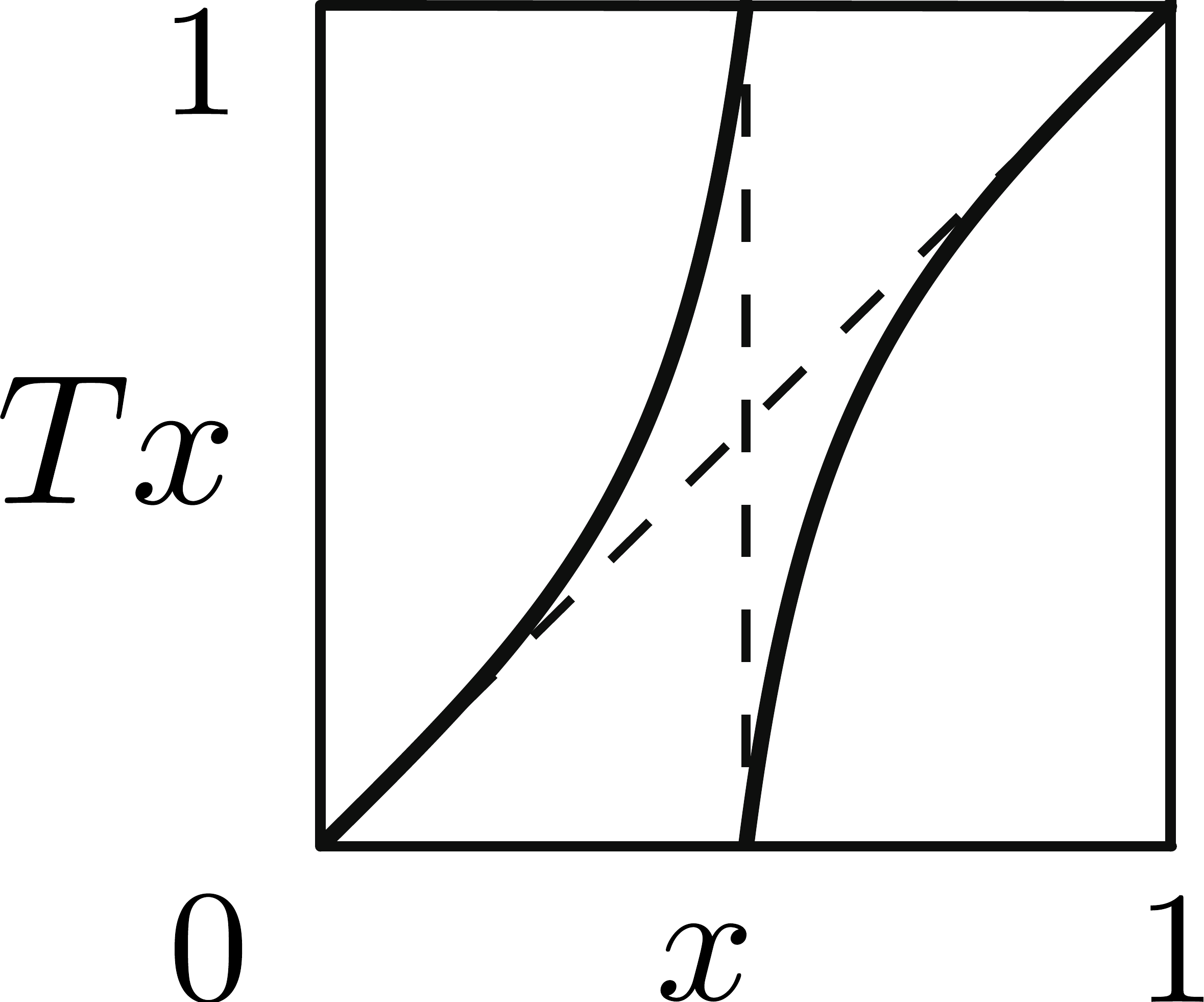}
	\caption{Graph of $T$}
	\label{fig:graph}
\end{minipage}
\begin{minipage}{0.7\hsize}
\centering
	\includegraphics[height=4cm]{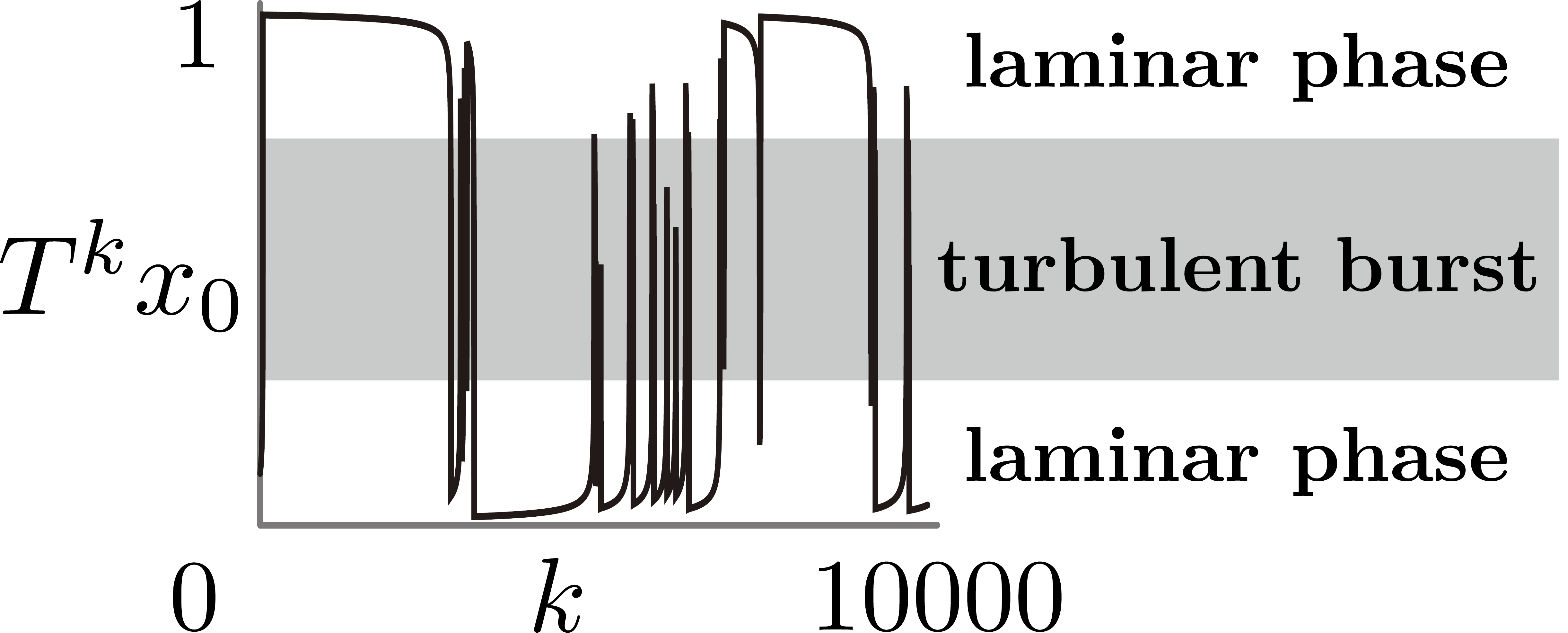}
	\caption{The orbit $(x_0, Tx_0,T^2 x_0,\dots)$ starting at $x_0=1/10$}
	\label{fig:orbit}	
\end{minipage}
\end{figure}
We are interested in non-trivial scaling limits of the occupation times for $[0,\delta)$, $[\delta,1-\delta]$ and $(1-\delta,1]$. Unfortunately, we cannot obtain them in the sense of a.e.$x$ convergence. See \cite{A77, ATZ} for the details. Instead, we can  obtain them in the sense of \emph{strong distributional convergence}, as we shall see in the following.

On the one hand, Aaronson \cite[Theorem 1]{A81} obtained  \emph{Darling--Kac type limit theorem} for the occupations away from the indifferent fixed points: for $t>0$ and $C=\sqrt{2\pi}/\mu([\delta,1-\delta])$,
\begin{align}\label{Aaronson}
	\frac{C}{\sqrt{n}}\sum_{k=1}^{\lfloor nt\rfloor}\mathbbm{1}_{\{T^k x\in [\delta, 1-\delta]\}}
	\;\underset{n\to\infty}{\overset{\cL(\mu)}{\Longrightarrow}}\;
	L(t)=\lim_{\varepsilon\downarrow0}\frac{1}{\sqrt{2}\varepsilon}\int_0^t \mathbbm{1}_{\{|B(s)|\leq \varepsilon\}}\rd s,
	\quad \text{in $\bR$},
\end{align}
where the notation $\overset{\cL(\mu)}{\Longrightarrow}$ denotes the strong distributional convergence with respect to $\mu$ (it will be explained in Section \ref{sec:notation}), and $B=(B(t):t\geq0)$ denotes a one-dimensional Brownian motion started at the origin, defined on a probability space $(\Omega, \cF, \bP)$. In other words, for any absolutely continuous probability measure $\nu$ on $[0,1]$, it holds that
\begin{align*}
  \nu\bigg[x\in [0,1]:\frac{C}{\sqrt{n}}\sum_{k=1}^{\lfloor nt\rfloor}\mathbbm{1}_{\{T^k x\in [\delta, 1-\delta]\}} \leq u\bigg]
  \underset{n\to\infty}{\to}
	\bP\big[L(t)\leq u\big]
	=
	\int_0^{u} \frac{e^{-s^2/(4t)}}{\sqrt{\pi t}}\rd s,
	\quad
	u\geq0.	
\end{align*}
The limit random variable $L(t)$ is called the \emph{Brownian local time} in the Blumenthal--Getoor normalization. Its one-dimensional marginal law is a half Gaussian distribution, that is, a Mittag-Leffler distribution of order $1/2$.
In addition, Aaronson \cite[Theorem 5']{A86} and Owada--Samorodnitsky \cite[Theorem 6.1]{OSb} obtained  a functional extension:
\begin{align}
   \bigg(
   \frac{C}{\sqrt{n}}\sum_{k=1}^{\lfloor nt\rfloor}\mathbbm{1}_{\{T^k x\in [\delta, 1-\delta]\}}:t\geq0\bigg)
   \;\underset{n\to\infty}{\overset{\cL(\mu)}{\Longrightarrow}}\;
   \big( L(t)  :t\geq0\big),	
   \quad \text{in $D([0,\infty),\bR)$}, \label{Ow-Sa}
\end{align}
where $D([0,\infty),\bR)$ denotes the space of c\`adl\`ag functions $w:[0,\infty)\to\bR$, endowed with the Skorokhod $J_1$-topology. The convergence (\ref{Ow-Sa}) is stronger than (\ref{Aaronson}), since distributional convergence in $D([0,\infty),\bR)$ implies finite-dimensional convergence.  

  On the other hand, Thaler \cite[Theorem]{T02} obtained  \emph{Lamperti type generalized arcsine laws} for the occupations near the indifferent fixed points: for any $t>0$,
\begin{align}\label{Thaler}
   \frac{1}{n}\sum_{k=1}^{\lfloor nt\rfloor}\mathbbm{1}_{\{T^k x <  \delta\}}   
   \;\underset{n\to\infty}{\overset{\cL(\mu)}{\Longrightarrow}}\;
   Z_-(t)
   =\int_0^t \mathbbm{1}_{\{B(s)<0\}}\rd s,
   \quad
   \text{in $\bR$}.	
\end{align}
The limit random variable $Z_-(t)$ denotes the amount of time which $B$ spends on the negative side $(-\infty,0)$ up to time $t$. Its  one-dimensional marginal law is an arcsine distribution. The convergence (\ref{Thaler}) is equivalent to the following: for any absolutely continuous probability measure $\nu$ on $[0,1]$, it holds that
\begin{align*}
 \nu\bigg[x\in[0,1]:
 \frac{1}{n}\sum_{k=1}^{\lfloor nt\rfloor}\mathbbm{1}_{\{T^k x < \delta\}}\leq u
 \bigg]
 \underset{n\to\infty}{\to}
 \bP\big[Z_-(t)\leq u \big]
 =\frac{2}{\pi}\text{arcsin}\bigg(\sqrt{\frac{u}{t}}\bigg),
 \quad 0\leq u\leq t.	
\end{align*}

 We now illustrate our main result. In Theorem \ref{main2} (see also Example \ref{ex:boole}), we obtain a functional convergence of the occupations near and away from the indifferent fixed points: 
 \begin{align}
		&\bigg( \frac{1}{n}\sum_{k=1}^{\lfloor nt\rfloor}\mathbbm{1}_{\{T^k x < \delta\}},
		\;\frac{C}{\sqrt{n}}\sum_{k=1}^{\lfloor nt\rfloor}\mathbbm{1}_{\{T^k x\in [\delta, 1-\delta]\}},\;
	\frac{1}{n}\sum_{k=1}^{\lfloor nt\rfloor}\mathbbm{1}_{\{T^k x>  1-\delta\}} :t\geq0\bigg)
	\notag\\ \label{our-main}
	&\hspace{10mm}\underset{n\to\infty}{\overset{\cL(\mu)}{\Longrightarrow}}\;
	\big(Z_-(t),\;L(t),\;Z_+(t):t\geq0\big),
    \quad
	 \text{in $D([0,\infty), \bR^3)$},
\end{align} 
where $Z_+(t)=t-Z_-(t)$ denotes the amount of time which $B$ spends on the positive side $(0,\infty)$ up to time $t$. 
Needless to say, our result (\ref{our-main}) is a refinement of the earlier results (\ref{Aaronson}), (\ref{Ow-Sa}) and (\ref{Thaler}). 
More generally, we focus on a certain class of intermittent interval maps with two or more indifferent fixed points, and study the time evolution of the occupations near and away from these points. We show that the scaling limits are occupation times and a local time of a \emph{skew Bessel diffusion process} moving on multiray. See Table \ref{table:scaling-limit}. In addition, using the functional convergence of occupation time processes, we can also obtain those of waiting time processes for the occupations away from the indifferent fixed points.  This convergence is a functional extension of \emph{Dynkin and Lamperti type generalized arcsine laws} for waiting times \cite{T98, TZ, Z07, KZ}. 

\begin{table}[]
\centering
\begin{tabular}{|l||l|l|l|}
\hline
 & Darling--Kac type & \begin{tabular}[c]{@{}c@{}}Dynkin and\\ Lamperti type\end{tabular} & Lamperti type\\ \hhline{|=#=|=|=|}
\begin{tabular}[c]{@{}c@{}}time marginal\\ limit\end{tabular} & \begin{tabular}[c]{@{}c@{}}Mittag-Leffler \\ distribution \end{tabular}& beta distribution & \begin{tabular}[c]{@{}c@{}}Lamperti's generalized\\ arcsine distribution \end{tabular} \\ \hhline{|=#=|=|=|}
functional limit& Bessel local time & Bessel waiting time & Bessel occupation time \\ \hline
\end{tabular}
\caption{Three types of limit theorems and corresponding scaling limits}
\label{table:scaling-limit}
\end{table}

The methods of the proofs in earlier studies have been calculus of moments or double Laplace transforms. 
Instead, we adopt a method of pathwise analysis. We are inspired by the studies of diffusion processes via the It\^o excursion theory, for examples, \cite{BPY, W95, FKY, Y17}.
We utilize the first return map to analyze the statistics of the excursions wandering near the indifferent fixed points (Section \ref{sec:proof2}). Then, using Tyran-Kami\'nska's functional limit theorem \cite{Ty10a, Ty10b}, we show that the partial sum process of the excursion lengths under a certain scaling converges to a stable L\'evy process, which implies the functional convergence of occupation time processes, since the occupation times can be represented in terms of the excursion lengths (Section \ref{sec:proof}).

Let us give a quick review of earlier studies of occupation times in the context of infinite ergodic theory. See Table \ref{table:ergodic}. 
Aaronson \cite{A81} studied Darling--Kac type limit theorem. Thaler \cite{T98} studied Dynkin and Lamperti type generalized arcsine laws for waiting times. Thaler \cite{T02} studied Lamperti type generalized arcsine laws for occupation times. These three types of limit theorems were individually extended to some classes of infinite ergodic transformations by Thaler--Zweim\"uller \cite{TZ} and Zweim\"uller \cite{Z07}.
Kocheim--Zweim\"uller \cite{KZ} obtained a joint-distributional extension both of Darling--Kac type and of Dynkin and Lamperti type.
Aaronson \cite{A86} and Owada--Samorodnitsky \cite{OSb} obtained a functional extension of Darling--Kac type. In the previous study \cite{SY}, the author and Kouji Yano obtained a multiray generalization of Lamperti type. They focused on interval maps with more than two indifferent fixed points, and studied the joint-law of occupations near each of these points. 
\renewcommand{\arraystretch}{1.2}
\begin{table}[]
\begin{center}
\begin{tabular}{|l||l|l|l|}
\hline
& \begin{tabular}[l]{@{}c@{}}Darling--Kac type \\ (occupation time for\\
 subset of finite mass) \end{tabular}   
& \begin{tabular}[c]{@{}c@{}}Dynkin and
          \\ Lamperti type \\ (waiting time)
                        \end{tabular} 
           &  \begin{tabular}[c]{@{}c@{}}Lamperti type \\ (occupation time for \\
           subset of infinite mass) \end{tabular}  
\\ \hhline{|=#=|=|=|}
\multirow{2}{*}{\begin{tabular}[c]{@{}c@{}} time\\ marginal\end{tabular}}  & 
\begin{tabular}[c]{@{}l@{}}Aaronson \cite{A81}
                       \\ Thaler--Zweim\"uller \cite{TZ}
                       \\ Zweim\"uller \cite{Z07}
                        \end{tabular} 
& \begin{tabular}[c]{@{}l@{}}Thaler \cite{T98}
                          \\ T--Z \cite{TZ}
                          \\ Z \cite{Z07}
                          \end{tabular} 
& \begin{tabular}[c]{@{}l@{}}Thaler \cite{T02}
                         \\ T--Z \cite{TZ}
                          \\ Z \cite{Z07}
\end{tabular} \\ \cline{2-3} 
   & \multicolumn{2}{c|}{Kocheim--Zweim\"uller \cite{KZ} (joint-law)} 
   & \begin{tabular}[c]{@{}l@{}}Sera--Kouji Yano \cite{SY}
 	                          \\ (multiray) 
 \end{tabular}
\\ \hhline{|=#=|=|=|}
\multirow{2}{*}{functional}
       &  \begin{tabular}[c]{@{}l@{}}
       Aaronson \cite{A86}
       \\
       Owada--Samorodnitsky \cite{OSb}
       \end{tabular}
        &   &   \\ \cline{2-4} 
 & \multicolumn{3}{c|}{\textbf{This paper} (joint-law, multiray) }   \\ \hline
\end{tabular}
\caption{This paper and earlier studies in the context of infinite ergodic theory}
\label{table:ergodic}
\end{center}
\end{table}
\renewcommand{\arraystretch}{1}

\renewcommand{\arraystretch}{1.2}
\begin{table}[]
\begin{center}\begin{tabular}{|l||l|l|l|}
\hline
&  \begin{tabular}[c]{@{}c@{}}Darling--Kac type\\ (local time) \end{tabular}    
& \begin{tabular}[c]{@{}c@{}}Dynkin and
          \\ Lamperti type \\ (waiting time)
                        \end{tabular} 
           &  \begin{tabular}[c]{@{}c@{}}Lamperti type \\ (occupation time)  \end{tabular}
\\ \hhline{|=#=|=|=|}
\multirow{3}{*}{\begin{tabular}[c]{@{}c@{}} time \\marginal \end{tabular}}  & 
\begin{tabular}[c]{@{}l@{}}L\'evy \cite{Le, Le48}
                   \\ Darling--Kac \cite{DK}
                   \\ Karlin--McGregor \cite{KaMc}
                   \\ Kasahara \cite{Kas75, Kas76}
                        \end{tabular} 
& \begin{tabular}[c]{@{}l@{}} L\'evy \cite{Le, Le48}
                          \\ Dynkin \cite{Dyn61}
                          \\ Lamperti \cite{La-b}
                          \end{tabular} 
& \begin{tabular}[c]{@{}l@{}} L\'evy \cite{Le, Le48}
                         \\ Lamperti \cite{La}
                      \\ Watanabe \cite{W95}
                      \\ Yuko Yano \cite{Y17}
\end{tabular} \\ \cline{2-3} 
   & \multicolumn{2}{c|}{Port \cite{Port} (joint-law)} 
   & \begin{tabular}[c]{@{}l@{}}  (multiray) 
 \end{tabular}
\\ 
\cline{2-4}
& \multicolumn{3}{c|}{Barlow--Pitman--Yor \cite{BPY} (joint-law, multiray)} 
\\
\hhline{|=#=|=|=|}
\multirow{2}{*}{functional}
       & Bingham \cite{B71} & Lamperti \cite{La62} & 
       \begin{tabular}[c]{@{}l@{}}
       Fujihara--Kawamura--\\Yuko Yano \cite{FKY}
       \end{tabular}
        \\ \cline{2-4} 
 & \multicolumn{3}{c|}{(Donsker \cite{Don}, Stone \cite{St}, Kasahara \cite{Kas75}) }   \\ \hline
\end{tabular}
\caption{Earlier studies in the context of null-recurrent Markov processes}
\label{table:prob}
\end{center}

\end{table}
\renewcommand{\arraystretch}{1}

The above-mentioned studies are analogues of limit theorems for null-recurrent Markov processes. See Table \ref{table:prob}. On the one hand, L\'evy \cite{Le, Le48} constructed the Brownian local time, and showed that its one-dimensional marginal law is a half Gaussian distribution. Darling--Kac \cite{DK} studied the occupation times of general null-recurrent Markov processes on a subset having finite mass. Darling--Kac type limit theorems have been further developed, for example by  Karlin--McGregor \cite{KaMc} for birth-and-death processes, by Kasahara \cite{Kas75, Kas76} for diffusion processes. Bingham \cite{B71} studied functional extension of Darling--Kac type. 
On the other hand, L\'evy \cite{Le, Le48} focused on one-dimensional Brownian motion and simple symmetric random walk, and obtained the well-known arcsine laws for waiting times and occupation times. The arcsine law for waiting times was generalized by Dynkin \cite{Dyn61} and Lamperti \cite{La-b}. Lamperti \cite{La62} further obtained a functional extension of them. 
Port \cite{Port} obtained a joint-distributional extension both of Darling--Kac type and of Dynkin and Lamperti type at the same time.
The arcsine law for occupation times was generalized by Lamperti \cite{La}. 
Barlow--Pitman--Yor \cite{BPY} focused on skew Bessel diffusion processes moving on multiray, and investigated the joint-distribution of the local times, of the waiting times, and of the occupation times, at the same time. Watanabe \cite{W95} and Yuko Yano \cite{Y17} further developed Lamperti type generalized arcsine laws for one-dimensional or multiray diffusion processes.
Fujihara--Kawamura--Yuko Yano \cite{FKY} obtained a functional extension of Lamperti \cite{La}.
We remark that functional limit theorems for trajectories were also obtained by Donsker \cite{Don} in the setting of non-heavy tailed random walks, by Stone \cite{St} and Kasahara \cite{Kas75} in the setting of one-dimensional diffusion processes.

The present paper is organized as follows. In Section \ref{sec:notation}, we set up notations and state our assumptions in a general setting. In Section \ref{sec:bessel}, we recall the definition and known results for skew Bessel diffusion processes.  
In Section \ref{sec:main}, we formulate our main result in the general setting and then apply it to interval maps with indifferent fixed points.
In Section \ref{sec:Skorokhod}, we recall the Skorokhod $J_1$-topology and prepare auxiliary results. 
The proofs of our general limit theorem and of its application are given in Sections \ref{sec:proof} and \ref{sec:proof2}, respectively. 
In Appendices, we recall several facts about uniformly expanding interval maps and functional stable limit theorem for stationary sequences.

\subsection*{Acknowledgements}

The author are grateful for many helpful comments  provided by Kouji Yano. The author also thanks Takuma Akimoto for drawing the author's attention to connections between interval maps with indifferent fixed points and intermittent phenomena.

%%%%%%%%%%%%%%%%%%%%%%%%%%%%%%%%%%%%%%%%%%%%%%%%%%%%%
\section{Notations and assumptions in a general setting}
\label{sec:notation} 
%%%%%%%%%%%%%%%%%%%%%%%%%%%%%%%%%%%%%%%%%%%%%%%%%%%%%

In this section, we will recall the notions of CEMPT, wandering rates, regular variation and exponentially continued fraction mixing. We will formulate certain assumptions (Assmptions \ref{ass:d-ray}, \ref{ass:reg-var} and \ref{ass:mixing}) for our functional limit theorem. These assumptions are satisfied in the setting of certain classes of null-recurrent Markov chains (Remark \ref{rem:markov}) and of intermittent interval maps (Subsection \ref{subsec:int} and Section \ref{sec:proof2}).
 
Let $(X,\cB,\mu)$ be a $\sigma$-finite measure space with $\mu(X)=\infty$. 
Let $T:(X,\cB,\mu) \to (X,\cB,\mu)$ be a conservative, ergodic, measure preserving transformation, which will be abbreviated as \textit{CEMPT}. 
Equivalently, we assume that $\mu\circ T^{-1}=\mu$ and, for any $A\in\cB$ with $\mu(A)>0$,
\begin{align*}
	\sum_{n\geq0}\mathbbm{1}_{\{T^n x \in A\}}=\infty,
	\quad
	\text{$\mu$-a.e.$x$,}
\end{align*}
where $\mathbbm{1}_{\{\cdot\}}$ denotes the indicator function.
For the details,  see \cite[Chapter 1]{A97}. 
We always use the summation signs to denote unions of disjoint sets; for example, $A_1+A_2$, $\sum_j A_j$ and so on.  
Following \cite{TZ, Z07, SY}, we will impose the following assumption from now on.

\begin{Ass}[Dynamical separation]\label{ass:d-ray}
Let $d\geq1$ be a positive integer. The state space $X$ can be decomposed into $X = \sum_{j=1}^d A_j+Y$ for $A_1,\dots,A_d,Y\in\mathcal{B}$ with $\mu(A_j)=\infty$ ($j=1,\dots,d$) and $\mu(Y)\in(0,\infty)$.
Furthermore, $Y$ \emph{dynamically separates $A_1,\dots,A_d$}, that is, the condition [$x \in A_i$ and $T^n x\in A_j$ for some 
$i \neq j$ and some $n\geq1$] implies the existence of some $k\in\{1,\dots,n-1\}$ for which $T^{k}x \in Y$.
\end{Ass}

Roughly speaking, $A_1,\dots,A_d$ play roles of rays, while $Y$ plays a role of a junction; the orbit cannot pass from $A_i$ to $A_j$ ($i\neq j$) without visiting $Y$. For $A\in\cB$ and $t\geq0$, we define the measurable function $S_A(t):X\to [0,\infty)$ by  
\begin{align}
\label{sA}
S_A(t)(x)&:=\sum_{k=1}^{\lfloor t\rfloor} \mathbbm{1}_{\{T^k x \in A\}},
              \quad x\in X.
\end{align}
In other words, $S_A(t)$ ($n\in\bN$) denotes the amount of time which the orbit  spends on $A$ up to  time $\lfloor t\rfloor$.
Birkhoff's pointwise ergodic theorem implies
\begin{align}\label{Birkhoff}
	\frac{1}{t}S_{Y}(t)
	\underset{t\to\infty}{\to}
	0
	\quad \bigg(\text{or equivalently,}\;\;
	\frac{1}{t} S_{A_1+\dots+A_d}(t)
	\underset{t\to\infty}{\to}
	1\bigg),
	\quad
	\text{$\mu$-a.e.}
\end{align}
We will denote by $(w(n))_{n\geq0}$ the \emph{wandering rate} of $Y$, and by $(w_j(n))_{n\geq0}$ the \emph{wandering rate of $Y$ starting from $A_j$}, i.e.,
\begin{align}
 w(n)&:=
 \mu\bigg(\bigcup_{k=0}^{n-1}T^{-k}Y\bigg),
 \quad
 n\geq0,
 \\
 w_j(n)&:=
 \mu\bigg(\bigcup_{k=0}^{n-1}(T^{-k}Y\cap A_j)\bigg),
 \quad n\geq0,\;j=1,\dots,d.	
\end{align}

Let $f,g:(0,\infty)\to(0,\infty)$ be measurable functions. For a constant $c\in [0,\infty]$, we will write $f(x)\underset{x\to x_0}{\sim} c g(x)$ if it holds that $\lim_{x\to x_0}f(x)/g(x)=c$.
We note that $c g(x)$ has only a symbolic meaning if $c=0$ or $\infty$. See \cite[p.$\:$xix]{BGT}.  
Let $\alpha\in\bR$. We will write $f\in\cR_\alpha(\infty)$ if  $f$ is \emph{regularly varying} of index $\alpha\in\bR$ at $\infty$, that is, $f(\lambda r)\underset{r\to\infty}{\sim} \lambda^\alpha f(r)$ for each $\lambda>0$. Similarly, we will write $f\in\cR_\alpha(0+)$ if $f$ is regularly varying of index $\alpha$ at $0$.
Let $(a_n)_{n\geq0}$ be a $(0,\infty)$-valued sequence. We will write $a_n\in \cR_\alpha(\infty)$ if the function [$r\mapsto a_{\lfloor r \rfloor}$] is regularly varying of index $\alpha$ at $\infty$.
 For basic discussions of regular variation, we refer the reader to Bingham--Goldie--Teugels \cite[Chapter 1]{BGT}.

The following assumption is essentially needed for the existence of non-trivial limit of $S_{A_1},\dots,S_{A_d}, S_Y$, as shown in earlier studies. See for example \cite[Theorems 2.7 and 2.8]{SY}. 

\begin{Ass}[Regular variations of wandering rates]\label{ass:reg-var}
For constants $\alpha\in(0,1)$, $\beta=(\beta_1,\dots,\beta_d)\in[0,1]^d$ with $\sum_{j=1}^d \beta_j=1$, it holds that 
\begin{align}
    &
    w(n) \in \cR_{1-\alpha}(\infty),
            \label{eq:reg}
   \\ 
    &
    w_j(n)\underset{n\to\infty}\sim \beta_j w(n), \quad j=1,\dots,d. 
    \label{eq:bal}
\end{align}	
\end{Ass}

From now on, suppose that Assumption \ref{ass:reg-var} is satisfied.
Let us define measurable functions $\varphi:X\to \bN\cup\{\infty\}$ and $\ell_j:X\to \bN\cup\{\infty\}$
 by   
\begin{align}
   \varphi(x)
   &:=\min\{k\geq1:T^k x\in Y\},
   \quad x\in X,
   \\
   \ell_j(x)
   &:=
   S_{A_j}(\varphi(x))(x)
   =\sum_{k=1}^{\varphi(x)}\mathbbm{1}_{\{T^kx\in A_j\}},
   \quad x\in X,
\end{align}
where it is understood that $\min\emptyset=\infty$.
In other words, $\varphi(x)$ denotes the first return time for $Y$, and $\ell_j(x)$ denotes the length of time spent in $A_j$ by the first excursion $(T^k x:1\leq k\leq \varphi(x))$ away from $Y$. 
Since $T$ is a CEMPT, we have $\varphi, \ell_j<\infty$, $\mu$-a.e. By Assumption \ref{ass:d-ray}, we have
\begin{align*}
	\{\ell_j=n\}
	&=
	T^{-1}A_j\cap\{\varphi=n+1\}
	\\
	&=
	\{x\in X: Tx,\dots,T^{n}x\in A_j \;\;\text{and}\;\; T^{n+1}x\in Y\},
	\quad n\geq1.
\end{align*}
Let us define a probability measure $\mu_Y$ on $(X,\cB)$ by
\begin{align}
   \mu_Y (A)
   :=\mu(A\cap Y)/\mu(Y), \quad A\in\cB.	
\end{align}
As shown in the proof of \cite[Lemma 6.2]{TZ} (see also \cite[Lemma 4.1]{SY}), the difference of $w$ is equal to the tail measure of $\varphi$, while the difference of $w_j$ is equal to the tail measure of $\ell_j$:
\begin{align}
  w(n+1)-w(n)
  &=
  \mu(Y)\mu_Y[\varphi>n],
  \quad n\geq0,
  \label{eq:difference-1}
  \\
  w_j(n+1)-w_j(n)
  &=
  \mu(Y)\mu_Y[\ell_j\geq n],
  \quad n\geq1,\;j=1,\dots,d.
  \label{eq:difference-2}	
\end{align}
Hence Karamata's Tauberian theorem implies the following lemma:

\begin{Lem}[Regular variations of excursion lengths]
Let $T$ be a CEMPT on $(X,\cB,\mu)$. Suppose that Assumptions \ref{ass:d-ray} and \ref{ass:reg-var} are satisfied. Then it holds that
\begin{align}
	&\mu_Y[\varphi > n]\underset{n\to\infty}{\sim} \frac{1-\alpha}{\mu(Y)}\frac{w(n)}{n}\in \cR_{-\alpha}(\infty), 
	\\
	&\mu_Y[\ell_j\geq n] 
	\underset{n\to\infty}{\sim}
	\beta_j \mu_Y[\varphi>n],
	\quad
	j=1,\dots,d.
\end{align} 
\end{Lem}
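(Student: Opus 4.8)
The plan is to combine the two identities \eqref{eq:difference-1}--\eqref{eq:difference-2}, which express the increments of the wandering rates as tail measures, with Karamata's Tauberian theorem (the monotone-density version), using the regular variation hypothesis \eqref{eq:reg} and the balance condition \eqref{eq:bal}. First I would treat the statement for $\varphi$. Since $w\in\cR_{1-\alpha}(\infty)$ with $1-\alpha\in(0,1)$, the sequence $n\mapsto w(n)$ is, up to asymptotic equivalence, a regularly varying sequence of positive index; one passes to the function $r\mapsto w(\lfloor r\rfloor)$ and applies the Tauberian/Abelian machinery of \cite[Chapter 1]{BGT}. The increments $w(n+1)-w(n)=\mu(Y)\,\mu_Y[\varphi>n]$ form a nonincreasing sequence (the tail of a measure is monotone), so the monotone density theorem \cite[Theorem 1.7.2]{BGT} applies: a nonincreasing sequence whose partial sums $w(n)\sim$ a regularly varying function of index $1-\alpha$ must itself be regularly varying of index $-\alpha$, with
\begin{align*}
 w(n+1)-w(n)\underset{n\to\infty}{\sim}(1-\alpha)\frac{w(n)}{n}.
\end{align*}
Dividing by $\mu(Y)$ gives the asserted equivalence $\mu_Y[\varphi>n]\sim\frac{1-\alpha}{\mu(Y)}\,\frac{w(n)}{n}$, and membership in $\cR_{-\alpha}(\infty)$ is immediate since $w\in\cR_{1-\alpha}(\infty)$ and $n\in\cR_1(\infty)$.

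For the second assertion, I would argue as follows. From \eqref{eq:difference-2}, $w_j(n+1)-w_j(n)=\mu(Y)\,\mu_Y[\ell_j\geq n]$ is again a nonincreasing sequence, and by \eqref{eq:bal} its partial sums satisfy $w_j(n)\sim\beta_j w(n)\in\cR_{1-\alpha}(\infty)$ (here one must be slightly careful if some $\beta_j=0$: then $w_j(n)=o(w(n))$ and the claim $\mu_Y[\ell_j\geq n]\sim\beta_j\mu_Y[\varphi>n]=o(\mu_Y[\varphi>n])$ should be read as $\mu_Y[\ell_j\geq n]=o(\mu_Y[\varphi>n])$, which follows from monotonicity of $w_j$ together with $w_j(n)=o(n^{1-\alpha+\epsilon})$; alternatively one restricts to $j$ with $\beta_j>0$ and handles the degenerate rays separately — I would add a remark to this effect). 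When $\beta_j>0$, the monotone density theorem applied to $w_j$ yields $w_j(n+1)-w_j(n)\sim(1-\alpha)\,w_j(n)/n\sim\beta_j(1-\alpha)\,w(n)/n$, and comparing with the $\varphi$-asymptotics gives $\mu_Y[\ell_j\geq n]\sim\beta_j\,\mu_Y[\varphi>n]$.

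The one genuine subtlety — and the step I would be most careful about — is the passage from "partial sums are regularly varying" to "summands are regularly varying'' for the \emph{difference} sequence: the monotone density theorem as usually stated concerns $\int_0^x f(t)\,\rd t$ versus $f(x)$, so one needs the discrete analogue, or one interpolates $w$ to a suitable absolutely continuous function whose derivative is piecewise constant equal to the increments. This is routine but should be cited precisely (e.g. via \cite[Theorem 1.7.2]{BGT} combined with \cite[Theorem 1.7.5]{BGT}, or the discrete version in \cite[\S1.7]{BGT}). The monotonicity of the tail measures $n\mapsto\mu_Y[\varphi>n]$ and $n\mapsto\mu_Y[\ell_j\geq n]$ is exactly what makes this Tauberian step legitimate; without it, regular variation of the partial sums would not transfer to the increments. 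Everything else is bookkeeping with \eqref{eq:difference-1}, \eqref{eq:difference-2}, \eqref{eq:reg} and \eqref{eq:bal}.
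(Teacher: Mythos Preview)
Your proposal is correct and follows exactly the route the paper indicates: the paper does not give a detailed proof but simply states that the identities \eqref{eq:difference-1}--\eqref{eq:difference-2} together with Karamata's Tauberian theorem yield the lemma, which is precisely what you do via the monotone density theorem. Your write-up is in fact more thorough than the paper's one-line justification, correctly isolating the monotonicity of the tail measures as the hypothesis needed for the Tauberian step and flagging the degenerate case $\beta_j=0$.
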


Set
\begin{align}
   \ell (x)
   &:=
   (\ell_1(x),\dots,\ell_d(x)),
   \quad x\in X.
\end{align}
We will denote by $T_Y:X\to Y$ the first return map for $Y$:
\begin{align}
	T_Y x:= T^{\varphi(x)}x,
	\quad x\in X.
\end{align}
The map $T_Y$ is a CEMPT on the probability space $(X,\cB, \mu_Y)$. Therefore, the sequence $(\ell\circ T_Y^n:n\geq 0)$ is strictly stationary with respect to $\mu_Y$. Note that $(\ell_j \circ T_Y^n)(x)$ is the length of time spent in $A_j$ by the $(n+1)$th excursion $(T^k (T_Y^n x): 1 \leq k \leq \varphi_Y (T^n_Yx))$ away from $Y$.
\begin{Ex}
Assume that 
\begin{align*}
  (T^k x)_{k=0}^6
  \in
  Y\times A_1\times Y \times A_2 \times A_2 \times Y \times Y. 
\end{align*}
Then we have 
	$\ell(x)=(1,0,\dots,0)$,
	$(\ell\circ T_Y) (x)=(0,2,\dots,0)$
	and $(\ell\circ T^2_Y) (x)= (0,0,\dots,0)$.
\end{Ex}

For $0\leq n\leq m \leq \infty$, we define the sub-$\sigma$-field $\cF_n^m\subset\cB$ by
\begin{align}
\cF_n^m:=\sigma\{\ell\circ T_Y^{k}:n\leq k \leq m\}.
\end{align}
That is, $\cF_n^m$ denotes the sub-$\sigma$-field generated by the $(n+1)$th, ($n+2$)th, $\dots$, and $(m+1)$th excursion lengths.
Finally, we will impose the following assumption:

\begin{Ass}[Local dependence of excursion lengths]\label{ass:mixing}
The sequence $(\ell\circ T_Y^n:n\geq0)$ of the excursion lengths is \emph{exponentially continued fraction mixing} with respect to $\mu_Y$. That is, there exist some constants $C\in(0,\infty)$ and $\theta\in(0,1)$ such that, for any $k,n\geq0$, $A\in\cF_0^k$ and $B\in\cF_{k+n}^\infty$,
\begin{align}
 |\mu_Y(A\cap B)-\mu_Y(A)\mu_Y(B)|
 \leq
 C\theta^n
 \mu_Y(A)\mu_Y(B).	
\end{align}
\end{Ass}

\begin{Rem}
Suppose that Assumptions \ref{ass:d-ray}, \ref{ass:reg-var} and \ref{ass:mixing} are satisfied. In addition, assume that $\cF_0^\infty=\cB\cap Y$. Then $Y$ is a Darling--Kac set, and hence $T$ is pointwise dual ergodic with normalizing constants
\begin{align}
	a_n:=\sum_{k=1}^n\frac{\mu(Y\cap T^{-k}Y)}{(\mu(Y))^2}
	   \underset{n\to\infty}{\sim}
	   \frac{1}{\Gamma(2-\alpha)\Gamma(1+\alpha)}\frac{n}{w(n)}\in\cR_\alpha(\infty).
\end{align}
See \cite[\S3.7-3.8]{A97} and \cite[Section 1]{A86} for the details. Consequently,  the assumption of \cite[Theorem 1]{A81} and \cite[Theorem 6.1]{OSb} is satisfied.
\end{Rem}

For $A\in\cB$ and $t\geq0$, we will denote by $G_A(t)$ the last visit time for $A$ before the time $t$, while we will denote by $D_A(t)$ the first visit time  for $A$ after the time $t$:
\begin{align}
  G_A(t)(x)&:=\max\{1\leq k\leq t: T^k x \in A\}, \quad x\in X,
  \label{def:G_A}
 \\
  D_A(t)(x)&:=\min\{k> t :T^k x \in A\}, \quad x\in X,
  \label{def:D_A}
\end{align}
where $\max\emptyset=0$ and $\min\emptyset=\infty$.
Here we imitate the notation of the last exit decomposition of Markov processes (see for example \cite{G79, RY, S97, SVY}), and our notation differs from that of \cite{T98, TZ, Z07, KZ}.

Let $U$ be a Polish space, $\nu_0$ a probability measure on $(X,\cB)$, and $(F_n)_{n\geq0}$ a sequence of $U$-valued measurable functions defined on $(X,\cB)$.
Let  $\zeta$ be a $U$-valued random variable defined on a probability space $(\Omega,\cF,\bP)$. 
We will write [$F_n \overset{\nu_0}{\Longrightarrow} \zeta$, in $U$] if the sequence of pushforward probability measures $(\nu_0\circ F_n^{-1})_{n\geq0}$ converges weakly to the law $\bP[\zeta\in\cdot]$ of $\zeta$.
We say that the $F_n$ \textit{converge to $\zeta$ strongly in distribution} (with respect to $\mu$) if, for any probability measure $\nu \ll \mu$ on $(X,\cB)$, the convergence [$F_n \overset{\nu}{\Longrightarrow} \zeta$, in $U$] holds.
Following \cite{TZ, Z07, Z07b, KZ}, we will denote by [$F_n \overset{\cL(\mu)}{\Longrightarrow} \zeta$, in $U$] the strong distributional convergence. 

We will write $C([0,\infty),\bR^d)$ for the space of all continuous functions $w:[0,\infty)\to \bR^d$. The space $C([0,\infty),\bR^d)$ is endowed with the Polish topology of uniform convergence on compact subsets of $[0,\infty)$.
We will denote by $D([0,\infty),\bR^d)$ the space of $\bR^d$-valued c\`adl\`ag functions $w:[0,\infty)\to\bR^d$. We equip $D([0,\infty),\bR^d)$ with the \emph{Skorokhod $J_1$-topology}, which is a Polish topology. See Section \ref{sec:Skorokhod} for the Skorokhod $J_1$-topology.

We are interested in a non-trivial scaling limit of the time evolution
\begin{align*}
	\Big((S_{A_j}(t))_{j=1}^d, S_Y(t), G_Y(t), D_Y(t):t\geq0\Big),
\end{align*}
in the sense of strong distributional convergence in $D([0,\infty),\bR^{d+3})$.
In Section \ref{sec:main}, we will state that the scaling limit is  the joint of occupation times, local time and waiting times of a skew Bessel diffusion process on multiray.

\section{Skew Bessel diffusion processes on multiray}\label{sec:bessel}

Following Barlow--Pitman--Yor \cite{BPY}, we will define skew Bessel diffusion processes on multiray and recall known results for them.  For basic properties of Bessel diffusion processes reflected at the origin, see for instance  Revuz--Yor \cite[Chapter XI]{RY}, Barlow--Pitman--Yor \cite[Section 2]{BPY} and Donati-Martin--Roynette--Vallois--Yor \cite{DRVY}.
For a deeper discussion of diffusion processes on multiray, we refer the reader to Yano \cite[Section 2]{Y17}. 
For basic discussions of Poisson point processes and the It\^o excursion theory, see for instance It\^o \cite{I15} and Revuz--Yor \cite[Chapter XII]{RY}.

%%%
Let $(B(t):t\geq0)$ be a standard one-dimensional Brownian motion. For $\alpha\in(0,1)$, let us consider the stochastic differential equation
\begin{align}\label{BESQ}
   X^{(\alpha)}(0)=0,
   \quad \rd X^{(\alpha)}(t)=2 \sqrt{X^{(\alpha)}(t)}\rd B(t)+(2-2\alpha)\rd t,
   \quad t\geq0.	
\end{align}
The equation \eqref{BESQ} has a unique strong solution $(X^{(\alpha)}(t):t\geq0)$ which is called the \emph{square of $(2-2\alpha)$-dimensional Bessel diffusion process} started and instantaneously reflected at the origin. 
The square root 
\begin{align}
R^{(\alpha)}=\Big(R^{(\alpha)}(t)=\sqrt{X^{(\alpha)}(t)}:t\geq0\Big)
\end{align}
is called the \emph{$(2-2\alpha)$-dimensional Bessel diffusion process}  started and instantaneously reflected at the origin.
%%%
%For $\alpha\in(0,1)$, we will denote by $R^{(\alpha)}=(R^{(\alpha)}(t):t\geq0)$  the \emph{Bessel diffusion process} of dimension $2-2\alpha\in(0,2)$, started and instantaneously reflected at the origin. 
%In other words, $R^{(\alpha)}$ is the $[0,\infty)$-valued regular diffusion process started at the origin with  scale function 
%\begin{align*} 
%s^{(\alpha)}(x)=x^{2\alpha},\quad x\geq0,
%\end{align*}
%and  speed measure 
%\begin{align*}
%m^{(\alpha)}(\rd x) = \alpha^{-1}x^{1-2\alpha}\mathbbm{1}_{(0,\infty)}(x)\rd x, \quad x\geq 0.
%\end{align*}
Their transition probability densities can be written in terms of the Bessel functions. That is the reason why they are called Bessel diffusion processes. 
The Bessel diffusion processes under their natural scale appear as the scaling limits of $[0,\infty)$-valued generalized diffusion processes (see \cite{St} and \cite[Section 5]{Kas75}).
In the special case of $\alpha=1/2$, this process is nothing else but the reflecting Brownian motion, that is, the absolute value of a standard one-dimensional Brownian motion.

Let us denote by $(L^{(\alpha)}(t,x):t,x\geq0)$ the local time of $R^{(\alpha)}$.
More specifically, the map $[0,\infty)^2\ni(t,x)\mapsto L^{(\alpha)}(t,x)\in [0,\infty)$ is jointly continuous, and for any bounded and measurable function $f:[0,\infty)\to[0,\infty)$, the following occupation-time formula holds:
\begin{align*}
  \int_0^t f(R^{(\alpha)}(s))\rd s
  =
  C^{(\alpha)}\int_0^\infty f(x)L^{(\alpha)}(t,x) x^{1-2\alpha} \rd x,
  \quad
  t\geq0,	
\end{align*}
where $C^{(\alpha)}:=2^{\alpha}\Gamma(\alpha)/\Gamma(1-\alpha)$. We will write $L^{(\alpha)}=(L^{(\alpha)}(t):t\geq0)$ for the local time of $R^{(\alpha)}$ at the origin (in the Blumenthal--Getoor normalization in the sense of \cite[VI.(45.5)]{RW2}):
\begin{figure}
\begin{minipage}{0.5\hsize}
\centering
	\includegraphics[width=0.8\linewidth]{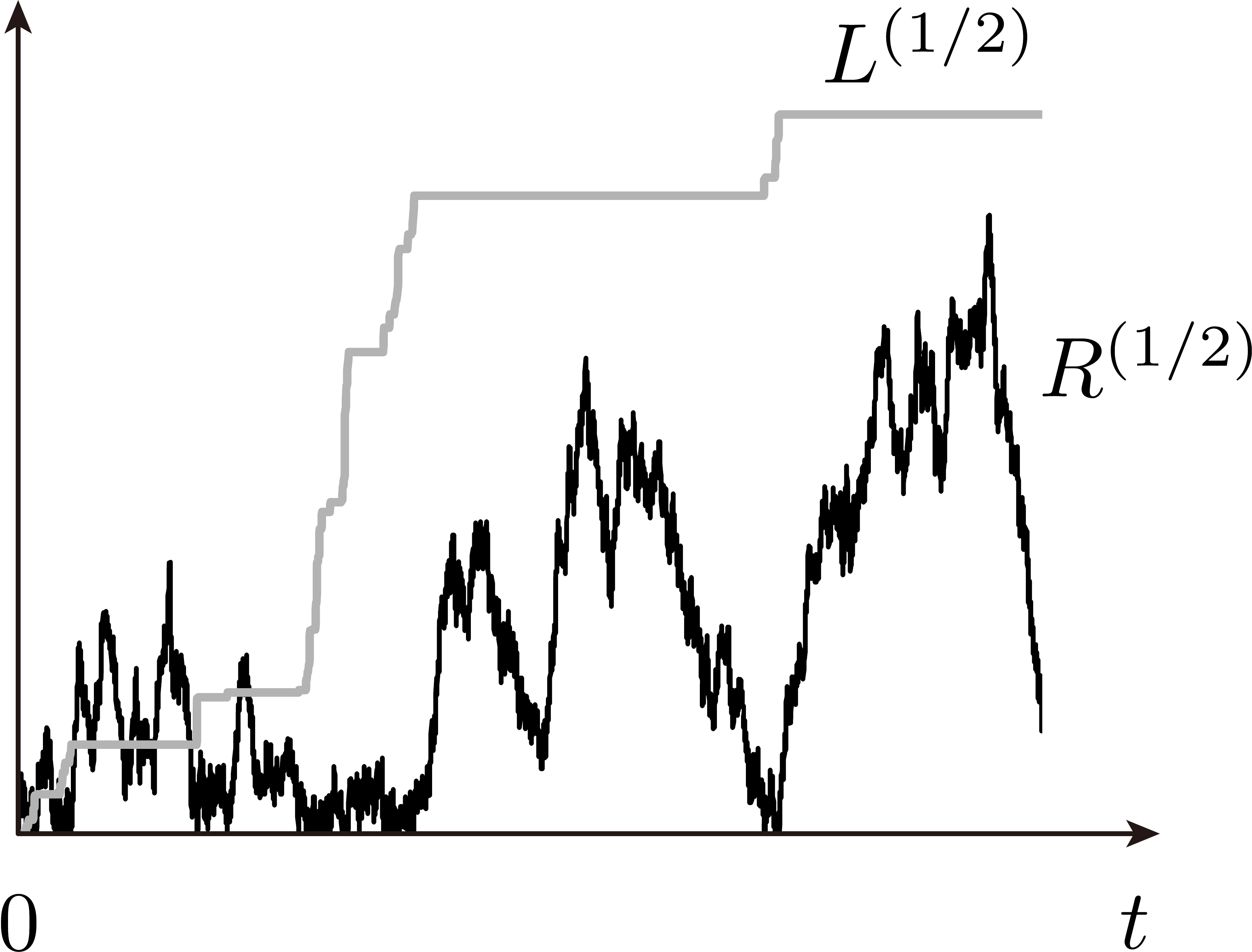}
	\caption{Reflecting Brownian motion $R^{(1/2)}$ and its local time $L^{(1/2)}$ at the origin}
	\label{fig:loc}
\end{minipage}
\hspace{0.1\hsize}
\begin{minipage}{0.35\hsize}
\centering
	\includegraphics[width=0.7\linewidth]{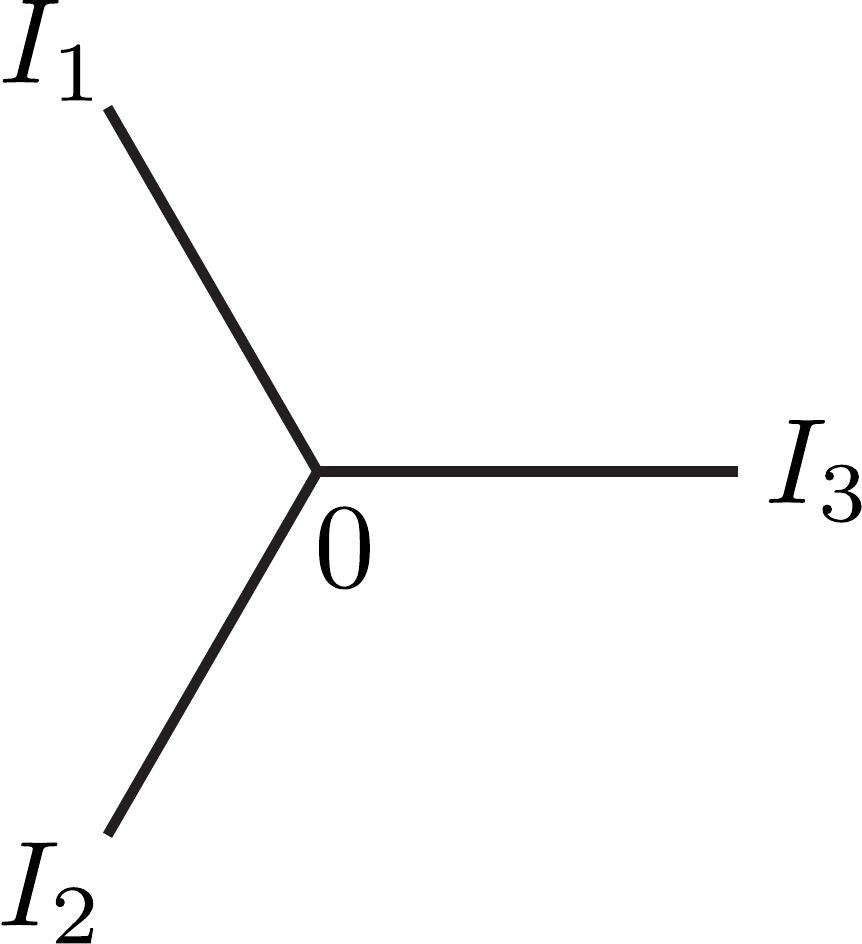}
	\caption{Multiray $I_1\cup I_2\cup I_3$ in the case of $d=3$}
	\label{fig:multiray}	
\end{minipage}
\end{figure}
\begin{align}
  L^{(\alpha)}(t)
  :=
  L^{(\alpha)}(t,0)
  =
  \lim_{\varepsilon\downarrow 0}
  \frac{2-2\alpha}{C^{(\alpha)}\varepsilon^{2-2\alpha}}
  \int_0^t \mathbbm{1}_{\{R^{(\alpha)}(s)\leq\varepsilon\}}\rd s,
  \quad
  t\geq0.
\end{align}
The process $L^{(\alpha)}$ increases only at the zeros of $R^{(\alpha)}$. See Figure \ref{fig:loc}.
The $L^{(\alpha)}$ is a Mittag-Leffler process of order $\alpha$. Its one-dimensional distributions are characterized by
\begin{align*}
	\bE[\exp(-\lambda L^{(\alpha)}(t))]
	=
	\sum_{n\geq0}
	\frac{(-\lambda t^{\alpha})^n}{\Gamma(1+n\alpha)},
	\quad
	\lambda,t\geq0.
\end{align*}
The finite-dimensional distributions of $L^{(\alpha)}$ are characterized by \cite[Propositions 1(a) and 1(b)]{B71}.
We will denote by $\eta^{(\alpha)}=(\eta^{(\alpha)}(s):s\geq0)$ the inverse local time of $R^{(\alpha)}$ at the origin:
\begin{align}
   \eta^{(\alpha)}(s)
   :=
   \big(L^{(\alpha)}\big)^{-1}(s)
   =
   \inf\{t>0:L^{(\alpha)}(t)>s\},
   \quad
   s\geq0.	
\end{align}
Then $\eta^{(\alpha)}$ is an $\alpha$-stable subordinator with Laplace transform
\begin{align*}
  \bE[\exp(-\lambda \eta^{(\alpha)}(s))]
  =
  \exp(-\lambda^{\alpha}s)
  =
  \exp\bigg(-s\int_0^\infty (1-e^{-\lambda r})\frac{\alpha r^{-1-\alpha}}{\Gamma(1-\alpha)}\rd r\bigg),
  \quad
  \lambda,s\geq0.	
\end{align*}
We know that the zero set of $R^{(\alpha)}$ coincides with the closure of the image set of $\eta^{(\alpha)}$:
\begin{align}\label{zeros-inv-local}
	\{t\geq0: R^{(\alpha)}(t)=0\}
	=
	\bar{\{\eta^{(\alpha)}(s):s\geq0\}}.
\end{align}
For $w=(w(t):t\geq0)\in D([0,\infty),\bR^d)$, set $w(t-):=\lim_{u\uparrow t}w(u)$ and $\Delta w(t):= w(t)-w(t-)$.
Let us denote by $e^{(\alpha)}=(e^{(\alpha)}_s=(e^{(\alpha)}_s(t):t\geq0):s\geq0)$ the excursion point process of $R^{(\alpha)}$ away from the origin for the normalization $L^{(\alpha)}$:
\begin{align}
	e^{(\alpha)}_s (t)
	:=
	\begin{cases}
	R^{(\alpha)}(t+\eta^{(\alpha)}(s-)) \mathbbm{1}_{\{t\leq \Delta\eta^{(\alpha)}(s)\}},
	& \text{if $\Delta\eta^{(\alpha)}(s)>0$,}
	\\
	0,
	& \text{otherwise.}
	\end{cases}
\end{align}
Recall that $e^{(\alpha)}$ is a Poisson point process with values in $C([0,\infty),[0,\infty))$.
Note that $\eta^{(\alpha)}$ coincides with the partial sum process of the lifetimes of $e^{(\alpha)}$:
\begin{align}
	\eta^{(\alpha)}(s)=\sum_{u\leq s}\Delta \eta^{(\alpha)}(u)=\sum_{u\leq s} \inf\{t>0:e_u^{(\alpha)}(t)=0\},
	\quad
	s\geq0.
\end{align}
We will denote by $\bn^{(\alpha)}$ the It\^o characteristic measure of the excursion point process $e^{(\alpha)}$.
By disintegreting $\bn^{(\alpha)}$ with respect to the lifetime, we have  
\begin{align*}
	\bn^{(\alpha)}(\cdot)=\int_0^\infty \bP_{0,0}^{2+2\alpha,r}(\cdot) \frac{\alpha r^{-1-\alpha}}{\Gamma(1-\alpha)}\rd r, 
\end{align*}
where $\bP_{0,0}^{2+2\alpha,r}$ denotes the law of the $(2+2\alpha)$-dimensional Bessel bridge from the origin to itself over $[0,r]$ in the sense of \cite[Chapter XI, \S3]{RY}. 

For $z\in\bC$ with $|z|=1$, we define the map $\sigma_{z}:C([0,\infty),\bC)\to C([0,\infty),\bC)$ by
\begin{align*}
  \sigma_z((w(t):t\geq0))
  =
  (w(t)z:t\geq0),
\end{align*}
that is, $\sigma_z$ is a rotation about the origin. 
Note that the image measure $\bn^{(\alpha)}\circ \sigma_z^{-1}$ is the It\^o characteristic measure of the excursion point process of $\sigma_z(R^{(\alpha)})$ away from the origin.
Let $d\geq1$ be a positive integer. Set
\begin{align*}
    z_j
    &:= \exp\big(2\pi j\sqrt{-1}/d\big),
    \quad j=1,\dots,d,
    \\
	I_j
	&:=\{rz_j\in \bC:r\geq0\},
	\quad
	j=1,\dots,d.
\end{align*}
See Figure \ref{fig:multiray}. Let $\beta=(\beta_1,\dots,\beta_d)\in[0,1]^d$ be a constant with $\sum_{j=1}^d \beta_j=1$. Let us denote by $e^{(\alpha,\beta)}=(e^{(\alpha,\beta)}_s=(e^{(\alpha,\beta)}_s(t):t\geq0):s\geq0)$ the Poisson point process with values in $C([0,\infty), \bigcup_{j=1}^d I_j)$ and with It\^o characteristic measure
	$\sum_{j=1}^d \beta_j\big(\bn^{(\alpha)} \circ \sigma_{z_j}^{-1}\big)$. Set
\begin{align}
	\eta^{(\alpha,\beta)}(s):=\sum_{u\leq s} \inf\{t>0:e_u^{(\alpha,\beta)}(t)=0\},
	\quad s\geq0.
\end{align}
That is, $\eta^{(\alpha,\beta)}$ denotes the partial sum process of the lifetimes of $e^{(\alpha,\beta)}$. By the definition, we have $|e^{(\alpha,\beta)}|=((|e^{(\alpha,\beta)}_s(t)|:t\geq0):s\geq0)\dist e^{(\alpha)}$ and hence  $\eta^{(\alpha,\beta)}\dist \eta^{(\alpha)}$.

\begin{Def}[Skew Bessel diffusion process]\label{def:skew-bes}
 For $\alpha\in (0,1)$ and $\beta=(\beta_1,\dots,\beta_d)\in[0,1]^d$ with $\sum_{j=1}^d\beta_j=1$, we will denote by $R^{(\alpha,\beta)}=(R^{(\alpha,\beta)}(t):t\geq0)$ the process pieced together from the excursion point processes $e^{(\alpha,\beta)}$, that is,
 \begin{align}
 	R^{(\alpha,\beta)}(t)
 	:=
 	\begin{cases}
 	e^{(\alpha,\beta)}_s(t-\eta^{(\alpha,\beta)}(s-)),
 	& \text{if $t\in(\eta^{(\alpha,\beta)}(s-), \eta^{(\alpha,\beta)}(s))$ for some $s\geq0$,}
 	\\
 	0,
 	&\text{otherwise.}	
 	\end{cases}
 \end{align}
The process $R^{(\alpha,\beta)}$ is called the $(\bigcup_{j=1}^d I_j)$-valued \emph{skew Bessel diffusion process} of dimension $(2-2\alpha)$, with skewness parameter $\beta$, started at the origin.	
\end{Def}

Note that $|R^{(\alpha,\beta)}|\dist R^{(\alpha)}$.
Roughly speaking, every time the process $R^{(\alpha,\beta)}$ reaches to the origin, it chooses randomly a ray $I_j$ from rays $I_1,\dots,I_d$ with probability $\beta_j$, and then it moves like the $I_j$-valued diffusion process $\sigma_{z_j}(R^{(\alpha)})$ until it returns again to the origin.
It is a multiray diffusion process on $\bigcup_{j=1}^d I_j$ in the sense of \cite[Section 2]{Y17}  (see \cite[Remark 2.2]{Y17}).

\begin{Rem}\label{rem:skew-bes2}
In the case of $\alpha=1/2$, the process $R^{(\alpha,\beta)}$ is also called the \emph{Walsh Brownian motion} or the \emph{Brownian spider}. In the case of $d=2$ and $\alpha=1/2$, it is also called \emph{skew Brownian motion}. See \cite{HS, BPYa, Lej} and the references therein. In the special case of $d=2$ and $\alpha=\beta_1=\beta_2=1/2$, the process $R^{(\alpha,\beta)}$ is nothing else but the standard one-dimensional Brownian motion. 
\end{Rem}

Set
\begin{align}
	\label{def:bes-occ}
	Z_j^{(\alpha,\beta)}(t)
	:=
	\int_0^t \mathbbm{1}_{\{R^{(\alpha,\beta)}(s)\in I_j\}}\rd s,
	\quad
	t\geq0,
	\;j=1,\dots,d,
	\\
	\label{def:bes-loc}
	L^{(\alpha,\beta)}(t)
	:=
  \lim_{\varepsilon\downarrow 0}
  \frac{2-2\alpha}{C^{(\alpha)}\varepsilon^{2-2\alpha}}
  \int_0^t \mathbbm{1}_{\{|R^{(\alpha,\beta)}(s)|\leq\varepsilon\}}\rd s,
  \quad
  t\geq0.
\end{align}
That is, $Z_j^{(\alpha,\beta)}(t)$ denotes the amount of time which $R^{(\alpha,\beta)}$ spends on $I_j$ up to time $t$, and $L^{(\alpha,\beta)}(t)$ denotes the local time of $R^{(\alpha,\beta)}$ at the origin up to time $t$. 
Note that $\eta^{(\alpha,\beta)}=(L^{(\alpha,\beta)})^{-1}$ and $L^{(\alpha,\beta)}\dist L^{(\alpha)}$.
%Recall that $R^{(\alpha,\beta)}$ has the Brownian scaling property in the sense that
%\begin{align*}
%  \bigg(\frac{R^{(\alpha,\beta)}(\lambda t)}{\sqrt{\lambda}}:t\geq0\bigg)
%  \dist 
%  \big(R^{(\alpha,\beta)}(t):t\geq0\big),
% \quad
% \text{in $C([0,\infty),\bC)$,}\quad\lambda>0.
%\end{align*}
%Hence, its occupation times for each rays and its local time at the origin have the following scaling property:
%\begin{align*}
%	&\bigg(\bigg(\frac{1}{\lambda}Z^{(\alpha,\beta)}_j(\lambda t)\bigg_{j=1}^d,\; \frac{L^{(\alpha,\beta)}(\lambda t)}{\lambda^{\alpha}}:t\geq0\bigg)
%		\\
%	&\dist
%	\big(Z^{(\alpha,\beta)}_1(t),\dots,Z^{(\alpha,\beta)}_d (t),L^{(\alpha,\beta)}(t):t\geq0\big),
%	\quad
%	\text{in $C([0,\infty),\bR^{d+1})$,}\quad\lambda>0.
%\end{align*}
%

\begin{Thm}[{\cite[Theorem 1]{BPY}}]\label{thm:BPY}
For $t>0$, it holds that
 \begin{align}
 	\bigg(\bigg(\frac{Z^{(\alpha,\beta)}_j(t)}{t}\bigg)_{j=1}^d,\;
 	      \frac{L^{(\alpha,\beta)}(t)}{t^\alpha} \bigg)
 	\dist
 	\bigg( \bigg(\frac{\xi_j}{\sum_{i=1}^d\xi_i}\bigg)_{j=1}^d, \;
     \frac{1}{(\sum_{i=1}^d\xi_i)^\alpha} \bigg),
    \quad\text{in $\bR^{d+1}$},
    \label{joint-dist}
\end{align}
where $\xi_1,\dots,\xi_d$ denote independent $[0,\infty)$-valued random variables with the one-sided $\alpha$-stable distributions characterized by
\begin{align}\label{xi-lap}
  \bE[\exp(-\lambda \xi_j)]
  =
  \exp(-\lambda^\alpha \beta_j),
  \quad
  \lambda\geq0,
  \;
  j=1,\dots,d.	
\end{align}	
\end{Thm}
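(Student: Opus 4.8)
The plan is to derive \eqref{joint-dist} from It\^o's excursion theory for $R^{(\alpha,\beta)}$, reducing everything to elementary facts about stable subordinators. The first step is to colour the excursion point process $e^{(\alpha,\beta)}$ according to which ray $I_j$ each excursion visits: since each excursion of $e^{(\alpha,\beta)}$ is a rotation by $z_j$ of a Bessel excursion and therefore lives on a single ray, and since the It\^o characteristic measure splits as $\sum_{j=1}^d \beta_j\rbra{\bn^{(\alpha)}\circ\sigma_{z_j}^{-1}}$, the restriction (colouring) property of Poisson point processes shows that the $d$ coloured point processes are independent, with characteristic measures $\beta_j\rbra{\bn^{(\alpha)}\circ\sigma_{z_j}^{-1}}$. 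Writing
\[
  \eta_j(s):=\sum_{u\leq s}\mathbbm{1}_{\{e^{(\alpha,\beta)}_u \text{ visits } I_j\}}\,\inf\cbra{t>0:e^{(\alpha,\beta)}_u(t)=0},\qquad s\geq0,\ j=1,\dots,d,
\]
for their partial-sum (lifetime) processes, the disintegration of $\bn^{(\alpha)}$ recalled in Section~\ref{sec:bessel} together with the rotation-invariance of lifetimes show that $\eta_1,\dots,\eta_d$ are independent $\alpha$-stable subordinators with $\bE\sbra{\exp(-\lambda\eta_j(s))}=\exp(-s\beta_j\lambda^\alpha)$, and $\eta^{(\alpha,\beta)}=\sum_{j=1}^d\eta_j$.

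Next I would record the exact identities valid along the inverse local time. Because the zero set of $R^{(\alpha,\beta)}$ has zero Lebesgue measure, the time spent on $I_j$ before $\eta^{(\alpha,\beta)}(s)$ equals the total lifetime, restricted to $I_j$, of the excursions completed by local time $s$; hence $Z^{(\alpha,\beta)}_j(\eta^{(\alpha,\beta)}(s))=\eta_j(s)$ for all $s\geq0$, while $L^{(\alpha,\beta)}(\eta^{(\alpha,\beta)}(s))=s$ since $\eta^{(\alpha,\beta)}$ is the right-continuous inverse of the continuous function $L^{(\alpha,\beta)}$. Put $\xi_j:=\eta_j(1)$, so that $\xi_1,\dots,\xi_d$ are independent with the Laplace transforms \eqref{xi-lap}, and $T:=\eta^{(\alpha,\beta)}(1)=\sum_{i=1}^d\xi_i$, which is a zero of $R^{(\alpha,\beta)}$ with $L^{(\alpha,\beta)}(T)=1$. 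Evaluating the left-hand side of \eqref{joint-dist} at $t=T$ then produces exactly its right-hand side:
\[
  \rbra{\rbra{\frac{Z^{(\alpha,\beta)}_j(T)}{T}}_{j=1}^d,\ \frac{L^{(\alpha,\beta)}(T)}{T^\alpha}}
  =
  \rbra{\rbra{\frac{\xi_j}{\sum_{i=1}^d\xi_i}}_{j=1}^d,\ \frac{1}{\rbra{\sum_{i=1}^d\xi_i}^\alpha}}.
\]

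It then remains to transfer this from the random zero $T$ to a deterministic time $t>0$. First, Brownian scaling $R^{(\alpha,\beta)}(c^2\,\cdot)\dist c\,R^{(\alpha,\beta)}(\cdot)$ (the rays being dilation-invariant) yields the joint scalings $Z^{(\alpha,\beta)}_j(c^2t)\dist c^2Z^{(\alpha,\beta)}_j(t)$ and $L^{(\alpha,\beta)}(c^2t)\dist c^{2\alpha}L^{(\alpha,\beta)}(t)$, so that the law of $\rbra{\rbra{Z^{(\alpha,\beta)}_j(t)/t}_{j=1}^d,\ L^{(\alpha,\beta)}(t)/t^\alpha}$ is the same for every $t>0$. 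Second, I would use the last-exit decomposition at level $s:=L^{(\alpha,\beta)}(t)$: with $g_t:=\eta^{(\alpha,\beta)}(s-)$ the last zero before $t$ and $I_{j_0}$ the ray of the excursion straddling $t$, one has $Z^{(\alpha,\beta)}_j(t)=\eta_j(s-)+(t-g_t)\,\mathbbm{1}_{\{j=j_0\}}$ and $t=g_t+(t-g_t)$, where by the marking property $j_0$ is $\beta$-distributed given the remaining data. Thus the fixed-time law is determined by the joint law of $\rbra{(\eta_j(s-))_{j=1}^d,\ g_t,\ s}$ — equivalently, by the Dynkin--Lamperti description of the last-exit time, the local time, and the $\beta$-split partial sums of the $\alpha$-stable subordinator $\eta^{(\alpha,\beta)}$ before level $t$ — together with the independent $\beta$-colouring of the straddling ray; a routine computation (for instance, matching double Laplace transforms in $(t,\lambda_1,\dots,\lambda_d)$) then identifies this law with the ratio-of-stables law on the right of \eqref{joint-dist}.

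The main obstacle is precisely this last transfer: the excursion straddling a fixed $t$ contributes an ``age'' $t-g_t$ to a single ray, so a soft scaling argument alone does not suffice and one genuinely needs the multi-ray Dynkin--Lamperti joint law of $(g_t,\,s,\,(\eta_j(s-))_{j=1}^d)$ for a stable subordinator. The remaining ingredients — the Poisson colouring and the occupation-time bookkeeping along the zero set — are routine.
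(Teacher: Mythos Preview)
The paper does not prove this theorem; it is quoted from Barlow--Pitman--Yor \cite[Theorem~1]{BPY} without argument, so there is no ``paper's own proof'' to compare against.

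As for your sketch: the excursion colouring, the identification of $\eta_1,\dots,\eta_d$ as independent $\alpha$-stable subordinators, and the identities $Z^{(\alpha,\beta)}_j(\eta^{(\alpha,\beta)}(s))=\eta_j(s)$, $L^{(\alpha,\beta)}(\eta^{(\alpha,\beta)}(s))=s$ are all correct and standard. Your evaluation at the random zero $T=\eta^{(\alpha,\beta)}(1)$ is fine, and the scaling argument showing the fixed-$t$ law is constant in $t$ is also fine. You correctly locate the only real issue: the identity at the random time $T$ does not by itself give the identity at a deterministic $t$, because of the excursion straddling $t$. Your proposed remedy via the last-exit decomposition (equivalently, the Dynkin--Lamperti description of $(g_t,L^{(\alpha,\beta)}(t),(\eta_j(L^{(\alpha,\beta)}(t)-))_j)$ together with the independent $\beta$-colouring of the straddling ray) is exactly how this is handled in \cite{BPY}; the double-Laplace-transform computation you allude to is carried out there and is indeed routine once set up. One caution: the ingredients you invoke in that last step are essentially the content of Theorem~\ref{thm:BPY-2}, which in this paper is stated after Theorem~\ref{thm:BPY} and also merely cited from \cite{BPY}, so your argument is not self-contained within the present paper --- but that is unavoidable given that neither result is proved here.
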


Recall that the joint-law of $(\xi_j/\sum_{i=1}^d \xi_i)_{j=1}^d$ is a multi-dimensional version of Lamperti's generalized arcsine distributions (see for example \cite[Subsection 2.2]{SY}), and  the law of $(\sum_{i=1}^d\xi_i)^{-\alpha}$ is a Mittag-Leffler distribution of order $\alpha$.

\begin{figure}
\centering
\includegraphics[width=0.6\linewidth]{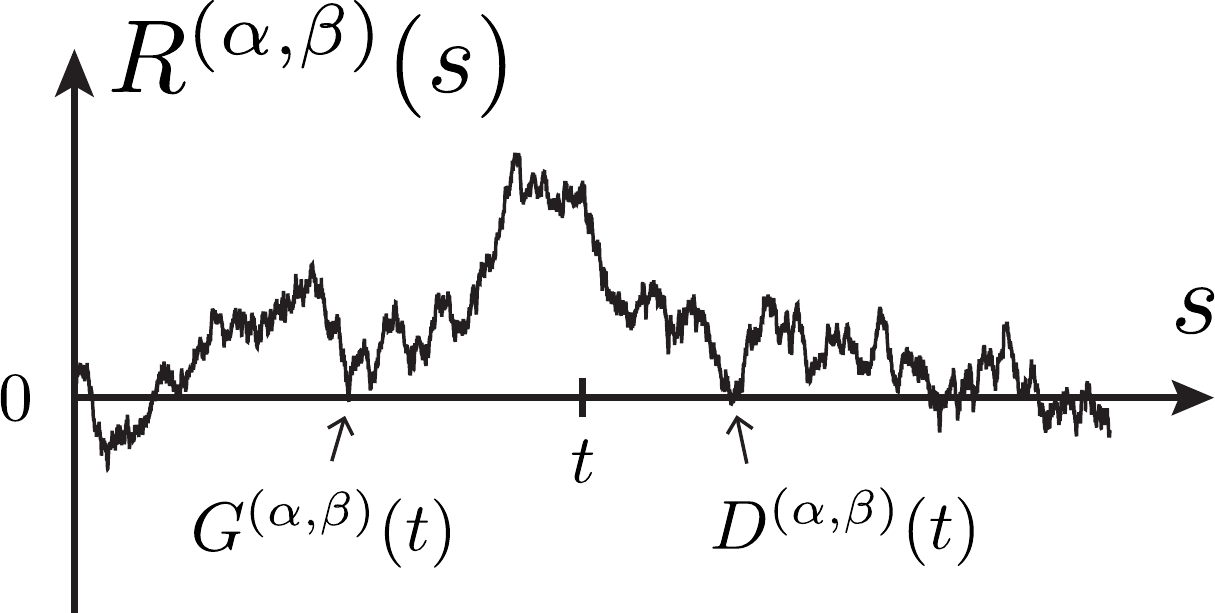}
\caption{One-dimensional Brownian motion $R^{(\alpha,\beta)}$ ($d=2$, $\alpha=\beta_1=\beta_2=1/2$)}
\label{fig:G-D}	
\end{figure}

For $t\geq0$, we will denote by $G^{(\alpha,\beta)}(t)$ the last zero of $R^{(\alpha,\beta)}$ before the time $t$ and by $D^{(\alpha,\beta)}(t)$ the first zero of $R^{(\alpha,\beta)}$ after the time $t$, that is,
\begin{align}
	G^{(\alpha,\beta)}(t)
	&:=
	\sup\{s\leq t:R^{(\alpha,\beta)}(s)=0\}, 
	\quad t\geq0,
	\\
	D^{(\alpha,\beta)}(t)
	&:=
	\:\inf\{s>t: R^{(\alpha,\beta)}(s)=0\},
	\quad t\geq0.
\end{align}
See Figure \ref{fig:G-D}.
The waiting times $(G^{(\alpha,\beta)}(t),D^{(\alpha,\beta)}(t))$ and related random variables are well-investigated in the studies of the last exit decomposition and diffusion bridges. See for example \cite{BPY, S97, Y06, SVY} and the references therein.
Let us recall a joint-distributional result for the occupation and waiting times.

\begin{Thm}[{\cite[Theorems 2 and 4]{BPY}}, see also {\cite[Subsection 1.3]{SVY}}]\label{thm:BPY-2}
For $t>0$, three joints 
\begin{gather*}
(G^{(\alpha,\beta)}(t), D^{(\alpha,\beta)}(t)),
\quad
\bigg(\bigg(\frac{Z_j^{(\alpha,\beta)}(G^{(\alpha,\beta)}(t))}{G^{(\alpha,\beta)}(t)}\bigg)_{j=1}^d, 
      \frac{L^{(\alpha,\beta)}(t)}
           {(G^{(\alpha,\beta)}(t))^\alpha} \bigg),
\\
\bigg(\frac{Z_j^{(\alpha,\beta)}(t)
             -Z_j^{(\alpha,\beta)}(G^{(\alpha,\beta)}(t))}
           {t-G^{(\alpha,\beta)}(t)}\bigg)_{j=1}^d	
\end{gather*}
are mutually independent, and the joint-laws of them are characterized by the following:
\begin{enumerate}
	\item The joint-law of $(G^{(\alpha,\beta)}(t), D^{(\alpha,\beta)}(t))$ is given by	
\begin{align}
  \bP[G^{(\alpha,\beta)}(t)\in \rd u,\; D^{(\alpha,\beta)}(t)\in\rd v]
  &=
  \frac{\alpha \sin(\alpha\pi)}{\pi}
  \frac{\rd u \rd v}{u^{1-\alpha}(v-u)^{1+\alpha}},
  \quad
  0<u<t<v.	
\end{align}
\item  For any bounded measurable function $f:\bR^{d+1}\to\bR$, it holds that
\begin{align}
  &\bE\Bigg[f\bigg(\bigg(\frac{Z_j^{(\alpha,\beta)}(G^{(\alpha,\beta)}(t))}{G^{(\alpha,\beta)}(t)}\bigg)_{j=1}^d, 
      \frac{L^{(\alpha,\beta)}(t)}
           {(G^{(\alpha,\beta)}(t))^\alpha} \bigg)\Bigg]
  \notag\\
  &=
  \bE\Bigg[
   f\bigg( \bigg(\frac{\xi_j}{\sum_{i=1}^d \xi_i}\bigg)_{j=1}^d, \frac{1}{(\sum_{i=1}^d \xi_i)^{\alpha}}\bigg)
   \frac{\Gamma(1+\alpha)}
          {(\sum_{i=1}^d \xi_i)^{\alpha}}\Bigg],
          \label{def:UV}
\end{align}
where $\xi_1,\dots,\xi_d$ denote independent $[0,\infty)$-valued $\alpha$-stable random variables characterized by \eqref{xi-lap}.
\item
\begin{align}
\bP\bigg[\bigg(\frac{Z_j^{(\alpha,\beta)}(t)
             -Z_j^{(\alpha,\beta)}(G^{(\alpha,\beta)}(t))}
           {t-G^{(\alpha,\beta)}(t)}\bigg)_{j=1}^d\in\cdot\bigg]
           =
           \sum_{j=1}^d \beta_j \delta_{(\mathbbm{1}_{\{i=j\}})_{i=1}^d}(\cdot),
\end{align}
where $\delta_x$ denotes the Dirac measure at $x$.	
\end{enumerate}

\end{Thm}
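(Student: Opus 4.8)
The plan is to read all of the stated joint laws off the It\^o excursion decomposition on which $R^{(\alpha,\beta)}$ is built, following \cite{BPY}. The two facts that carry the argument are the master formula (compensation formula) for the Poisson point process $e^{(\alpha,\beta)}$, applied at the fixed time $t$, and the classical independence of the normalized increments of an $\alpha$-stable subordinator from its total value --- equivalently, for the one-sided $\alpha$-stable family \eqref{xi-lap} the simplex-valued vector $(\xi_j/\sum_i\xi_i)_j$ is independent of $\sum_i\xi_i$.

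First I would record the identifications needed for part (1). Since $|R^{(\alpha,\beta)}|\dist R^{(\alpha)}$, the zero set $\{s\geq0:R^{(\alpha,\beta)}(s)=0\}$ is the closure of the range of $\eta^{(\alpha,\beta)}\dist\eta^{(\alpha)}$; hence, for a fixed $t$ (which almost surely is not a zero), $G^{(\alpha,\beta)}(t)=\eta^{(\alpha,\beta)}(L^{(\alpha,\beta)}(t)-)$ and $D^{(\alpha,\beta)}(t)=\eta^{(\alpha,\beta)}(L^{(\alpha,\beta)}(t))$, i.e.\ $(G^{(\alpha,\beta)}(t),D^{(\alpha,\beta)}(t))$ are the left and right endpoints of the jump of the $\alpha$-stable subordinator $\eta^{(\alpha)}$ straddling $t$. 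The compensation formula for the Poisson point process of jumps of $\eta^{(\alpha)}$ then yields $\bP[G^{(\alpha,\beta)}(t)\in\rd g,\,D^{(\alpha,\beta)}(t)\in\rd v]=\frac{g^{\alpha-1}}{\Gamma(\alpha)}\cdot\frac{\alpha(v-g)^{-1-\alpha}}{\Gamma(1-\alpha)}\,\rd g\,\rd v$ on $0<g<t<v$ (the first factor being the potential density of $\eta^{(\alpha)}$, the second its L\'evy density); since $\Gamma(\alpha)\Gamma(1-\alpha)=\pi/\sin(\alpha\pi)$ this is the density in part (1).

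Next I would set $\ell:=L^{(\alpha,\beta)}(t)$ and rewrite the remaining objects through $e^{(\alpha,\beta)}$. Letting $\tau_j(s)$ denote the occupation of $I_j$ accumulated by the excursions with local-time label below $s$, one has $Z_j^{(\alpha,\beta)}(G^{(\alpha,\beta)}(t))=\tau_j(\ell)$ and $G^{(\alpha,\beta)}(t)=\sum_j\tau_j(\ell)=\eta^{(\alpha,\beta)}(\ell-)$; and, since the straddling excursion $e^{(\alpha,\beta)}_\ell$ lives entirely in one ray $I_J$, $Z_j^{(\alpha,\beta)}(t)-Z_j^{(\alpha,\beta)}(G^{(\alpha,\beta)}(t))=(t-G^{(\alpha,\beta)}(t))\mathbbm{1}_{\{j=J\}}$. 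By the thinning property of Poisson point processes, $\tau_1,\dots,\tau_d$ are independent $\alpha$-stable subordinators with $\bE[\exp(-\lambda\tau_j(s))]=\exp(-s\beta_j\lambda^\alpha)$; consequently $(\tau_j(\ell))_j\dist\ell^{1/\alpha}(\xi_j)_j$ with $(\xi_j)$ as in \eqref{xi-lap}, and for every $\ell$ the normalized vector $(\tau_j(\ell)/\sum_i\tau_i(\ell))_j\dist(\xi_j/\sum_i\xi_i)_j$ is independent of $\sum_i\tau_i(\ell)$ and has a law not depending on $\ell$. This already identifies the two \emph{shapes} in parts (2) and (3): before $G^{(\alpha,\beta)}(t)$ the occupations have simplex shape $(\xi_j/\sum_i\xi_i)_j$, and on $(G^{(\alpha,\beta)}(t),t]$ they have shape $(\mathbbm{1}_{\{i=J\}})_i$.

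The main step, and the main obstacle, is to pass from conditioning on the local-time label $\ell$ to conditioning on the fixed time $t$. Here I would invoke the master formula for $e^{(\alpha,\beta)}$: the unique straddling excursion is, conditionally on the earlier excursions $(e^{(\alpha,\beta)}_u)_{u<\ell}$, an independent excursion distributed according to $\sum_{j=1}^d\beta_j(\bn^{(\alpha)}\circ\sigma_{z_j}^{-1})$ conditioned to have lifetime $V>t-G^{(\alpha,\beta)}(t)$, and the label $\ell$ picks up the extra weight $\rd\ell$. Three things follow. (a) The ray $J$ of the straddling excursion equals $j$ with probability $\beta_j$, independently of the earlier excursions and, because the lifetime measure on ray $j$ is $\beta_j\nu(\rd r)$ with $\nu(\rd r)=\alpha r^{-1-\alpha}\Gamma(1-\alpha)^{-1}\rd r$, unaffected by the conditioning $\{V>t-G^{(\alpha,\beta)}(t)\}$; together with the previous paragraph this gives part (3) and its independence of the other two joints. (b) After substituting $(\tau_j(\ell))_j\dist\ell^{1/\alpha}(\xi_j)_j$, the constraint $\{G^{(\alpha,\beta)}(t)<t<D^{(\alpha,\beta)}(t)\}$ involves only the three scalars $\ell$, $\sum_i\xi_i$ and $V$ and leaves the simplex vector $(\xi_j/\sum_i\xi_i)_j$ untouched, so the latter stays independent of $(G^{(\alpha,\beta)}(t),D^{(\alpha,\beta)}(t))$. (c) The change of variables $(\ell,\sum_i\xi_i,V)\mapsto(G^{(\alpha,\beta)}(t),D^{(\alpha,\beta)}(t),\sum_i\xi_i)$, via $G^{(\alpha,\beta)}(t)=\ell^{1/\alpha}\sum_i\xi_i$, $D^{(\alpha,\beta)}(t)=G^{(\alpha,\beta)}(t)+V$, $L^{(\alpha,\beta)}(t)=\ell$, factors the joint law into the $(G,D)$-law of part (1) times the size-biasing of the law of $\big((\xi_j/\sum_i\xi_i)_j,(\sum_i\xi_i)^{-\alpha}\big)$ by the weight $\Gamma(1+\alpha)(\sum_i\xi_i)^{-\alpha}$, the identity $\bE[(\sum_i\xi_i)^{-\alpha}]=\Gamma(1+\alpha)^{-1}$ confirming that this weight has total mass one and hence producing precisely the constant in \eqref{def:UV}. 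The delicate points I would treat with care are the precise form of the master formula with its $\rd\ell$-weighting at a fixed time, and the bookkeeping of the Jacobian and of the stable normalizing constants through this last change of variables.
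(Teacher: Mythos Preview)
The paper does not supply its own proof of this theorem: it is quoted verbatim as \cite[Theorems~2 and~4]{BPY} (see also \cite[Subsection~1.3]{SVY}) and used only as background for identifying the limit laws in Corollary~\ref{cor:main1}. Your proposal reconstructs the argument along exactly the lines of \cite{BPY}: apply the master formula for the excursion point process $e^{(\alpha,\beta)}$ at the fixed time $t$, use that the straddling excursion picks its ray with probabilities $(\beta_j)_j$ independently of its lifetime, and reduce the remaining joint law to a change of variables $(\ell,\Sigma,V)\mapsto(G,D,\Sigma)$ combined with the stable-subordinator fact that the normalized jump configuration --- hence the simplex vector $(\xi_j/\Sigma)_j$ --- is independent of $\Sigma=\sum_i\xi_i$.

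Two small remarks. First, your notation $\tau_j(\ell)$ for $\eta_j^{(\alpha,\beta)}(\ell-)$ is fine once declared, but be explicit that it is the left limit, since $\ell=L^{(\alpha,\beta)}(t)$ is a.s.\ a jump time of $\eta^{(\alpha,\beta)}$. Second, the independence of $(\xi_j/\Sigma)_j$ from $\Sigma$ is the genuinely nontrivial ingredient; you invoke it as ``classical'', and it is, but the clean justification is the one implicit in the construction: realize $(\xi_j)_j$ by $\beta$-coloring the jumps of a single $\alpha$-stable subordinator, note that the ranked normalized jumps are Poisson--Dirichlet$(\alpha,0)$ and hence independent of the total $\Sigma$, and observe that the colors are independent of everything. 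With that in hand your factorization in step (c) goes through and produces both the density in part~(1) and the size-biasing constant $\Gamma(1+\alpha)$ in \eqref{def:UV}, using $\bE[\Sigma^{-\alpha}]=1/\Gamma(1+\alpha)$. So your plan is correct and is, up to presentation, the proof in the cited references.
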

The joint-law of $(G^{(\alpha,\beta)}(t), D^{(\alpha,\beta)}(t))$ also appears as the limit distribution of waiting times in the renewal theory \cite{Dyn61, La-b}. Note that the one-dimensional law of $G^{(\alpha,\beta)}(t)/t\dist t/D^{(\alpha,\beta)}(t)$ is $\text{Beta}(\alpha,1-\alpha)$-distribution, which is the arcsine distribution in the case of $\alpha=1/2$. 
In the special case of $\alpha=\beta_1=1/2$, we know that the one-dimensional law of $Z^{(\alpha,\beta)}_1(G^{(\alpha,\beta)}(t))/G^{(\alpha,\beta)}(t)$ is uniform distribution on $[0,1]$. This fact was due to  L\'evy \cite{Le} and is called the uniform law.
In the case of $\alpha=1/2$ and $\beta_1=p\in(0,1)$, we have
\begin{align}
	\bP\bigg[\frac{Z^{(\alpha,\beta)}_1(G^{(\alpha,\beta)}(t))}{G^{(\alpha,\beta)}(t)}\in \rd x\bigg]
	=
	\frac{p(1-p)}{2}((1-2p)x+p^2)^{-3/2}\rd x,
	\quad 0<x<1.
\end{align}
In the case of $\alpha\in(0,1)$ and $\beta_1=p\in(0,1)$, we have
\begin{align}
	&\bP\bigg[\frac{Z^{(\alpha,\beta)}_1(G^{(\alpha,\beta)}(t))}{G^{(\alpha,\beta)}(t)}\leq x\bigg]
	\notag\\
	&=
	\frac{\sin(\alpha \pi)}{\pi}\int_0^x \frac{(1-p)(x-s)^{\alpha-1}s^\alpha \rd s}
	{p^2(1-s)^{2\alpha}+(1-p)^2s^{2\alpha}+2p(1-p)s^{\alpha}(1-s)^{\alpha}\cos(\alpha\pi)},
	\quad 0\leq x\leq 1.
\end{align}
We refer the reader to \cite[Example 1 and Theorem 3.1]{Y06} for the details.

\section{Main result}\label{sec:main}
\subsection{Functional limit theorem in the general setting}\label{subsec:functional}

Let us return to the setting introduced in Section \ref{sec:notation}. Set 
\begin{align}
	b_n
	:= 
	\frac{1}{\Gamma(1-\alpha)\mu_Y[\varphi \geq n]}
	\underset{n\to\infty}{\sim}
	\frac{\mu(Y)}{\Gamma(2-\alpha)}\frac{n}{w(n)}\in\cR_\alpha(\infty).
\end{align}
Note that $b_n\underset{n\to\infty}{\sim} \Gamma(1+\alpha)\mu(Y)a_n$.
We can now formulate our main result in the general setting.  

\begin{Thm}[Functional convergence of occupation and waiting time processes]\label{main1}
Let $T$ be a CEMPT on $(X,\cB,\mu)$ and
suppose that Assumptions \ref{ass:d-ray}, \ref{ass:reg-var} and \ref{ass:mixing} hold. 
Then it holds that 
\begin{align}
    &\bigg(\bigg(\frac{1}{n}S_{A_j}(nt)\bigg)_{j=1}^d,\;
          \frac{1}{b_n}S_Y(nt),\; \frac{1}{n}G_Y(nt),\; \frac{1}{n}D_Y(nt):t\geq0\bigg) 
    \notag\\
    \label{main1-1}
    &
    \underset{n\to\infty}{\overset{\cL(\mu)}{\Longrightarrow}}
    \Big(  \big(Z_j^{(\alpha,\beta)}(t)\big)_{j=1}^d,
     \;L^{(\alpha,\beta)}(t),\; G^{(\alpha,\beta)}(t), \;D^{(\alpha,\beta)}(t) :t\geq0\Big),
    \quad\text{in $D([0,\infty),\bR^{d+3})$}.
\end{align}
\end{Thm}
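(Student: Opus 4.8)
The strategy is to reduce Theorem~\ref{main1} to a functional stable limit theorem for the partial-sum process of the excursion data $(\ell\circ T_Y^{i},\varphi\circ T_Y^{i})_{i\ge0}$ and then recover all processes in \eqref{main1-1} by a single continuous-mapping argument. A preliminary step lets us replace $\mu$ by the more convenient reference measure $\mu_Y$. Since $\varphi\circ T=\varphi-1$ off $T^{-1}Y$, one has $T_Y\circ T=T_Y$ off $T^{-1}Y$, so a one-step time shift alters each of the processes in \eqref{main1-1} only through the incomplete current excursion; its rescaled contribution is $O(1/n)$ except on the event that a return time $R_m:=\sum_{i<m}\varphi\circ T_Y^{i}$ lies within a unit interval of $nt$, which has $\mu_Y$-probability $o(1)$ by a renewal estimate (at scale $n$ the $R_m$ are spread out with gaps of order $n$). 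Hence $\rho(F_n,F_n\circ T)\to0$ in $\mu_Y$-probability for a bounded metrization $\rho$ of $D([0,\infty),\bR^{d+3})$, and by the standard reduction principle for strong distributional convergence of asymptotically $T$-invariant sequences (Zweim\"uller \cite{Z07b}; see also \cite{Z07}) it suffices to prove \eqref{main1-1} with respect to $\mu_Y$. From now on we work on $(X,\cB,\mu_Y)$, on which $(\ell\circ T_Y^{i},\varphi\circ T_Y^{i})_{i\ge0}$ is strictly stationary.

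\emph{Stable limit for the excursion sums.} Put $\Psi_n(t):=\big((\tfrac{1}{n}\sum_{i=0}^{\lfloor b_n t\rfloor-1}\ell_j\circ T_Y^{i})_{j=1}^{d},\,\tfrac{1}{n}\sum_{i=0}^{\lfloor b_n t\rfloor-1}\varphi\circ T_Y^{i}\big)$ for $t\ge0$, with coordinates $\Sigma_{1,n},\dots,\Sigma_{d,n},\Phi_n$. I claim
\begin{align*}
(\Psi_n(t):t\ge0)\;\overset{\mu_Y}{\Longrightarrow}\;\big((\eta_1(t),\dots,\eta_d(t),\eta^{(\alpha,\beta)}(t)):t\ge0\big)\qquad\text{in }D([0,\infty),\bR^{d+1})\text{ with }J_1,
\end{align*}
where $\eta_1,\dots,\eta_d$ are the independent $\alpha$-stable subordinators obtained from $\eta^{(\alpha)}$ by colouring its jumps with probabilities $\beta_1,\dots,\beta_d$ and $\eta^{(\alpha,\beta)}=\eta_1+\dots+\eta_d$ (so $\eta_j(1)\dist\xi_j$ with $\xi_j$ as in \eqref{xi-lap}). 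This is an instance of Tyran-Kami\'nska's functional stable limit theorem for stationary weakly dependent sequences \cite{Ty10a,Ty10b} (recalled in the appendix): the tail input is the established asymptotics $\mu_Y[\varphi>n]\in\cR_{-\alpha}(\infty)$, calibrated so that $b_n\mu_Y[\varphi\ge n]=1/\Gamma(1-\alpha)$, together with $\mu_Y[\ell_j\ge n]\sim\beta_j\mu_Y[\varphi>n]$; the decisive geometric feature is that, by Assumption~\ref{ass:d-ray}, each excursion visits exactly one ray, so $(\ell_1,\dots,\ell_d)$ is supported on the coordinate axes and $(\ell,\varphi)$ is multivariate regularly varying with limiting L\'evy measure concentrated on the half-lines $\{r\cdot((\mathbbm{1}_{\{k=j\}})_{k=1}^d,1):r>0\}$, $j=1,\dots,d$, with masses $\beta_j$ (this forces the $d+1$ coordinates of $\Psi_n$ to jump simultaneously along a single axis); and exponential continued fraction mixing (Assumption~\ref{ass:mixing}) supplies the required mixing and anti-clustering hypotheses, in particular $\mu_Y[\varphi\circ T_Y^{0}>x,\,\varphi\circ T_Y^{k}>x]\le(1+C\theta^{k})\,\mu_Y[\varphi>x]^2$, which rules out extremal clustering of the excursion lengths and thus upgrades $M_1$- to $J_1$-convergence.

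\emph{Recovering the occupation and waiting times.} For $x\in Y$, bookkeeping of the excursion decomposition gives, up to errors $o(1)$ uniform on compact $t$-sets,
\begin{align*}
\tfrac{1}{b_n}S_Y(nt)=\Phi_n^{\leftarrow}(t),\quad
\tfrac{1}{n}G_Y(nt)=\Phi_n\big(\Phi_n^{\leftarrow}(t)-\big),\quad
\tfrac{1}{n}D_Y(nt)=\Phi_n\big(\Phi_n^{\leftarrow}(t)\big),
\end{align*}
\begin{align*}
\tfrac{1}{n}S_{A_j}(nt)=\Sigma_{j,n}\big(\Phi_n^{\leftarrow}(t)-\big)+\Big(\tfrac{\lfloor nt\rfloor}{n}-\Phi_n\big(\Phi_n^{\leftarrow}(t)-\big)\Big)\cdot\mathbbm{1}\{\text{the current increment of }\Phi_n\text{ is carried by }\Sigma_{j,n}\},
\end{align*}
where $\Phi_n^{\leftarrow}(t):=\inf\{s:\Phi_n(s)>t\}$; the selector is well defined because, by the previous step, a macroscopic increment of $\Phi_n$ is asymptotically carried by exactly one $\Sigma_{j,n}$, namely the ray of the straddling excursion. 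Thus the left-hand side of \eqref{main1-1} equals $\mathfrak h(\Psi_n)+o(1)$, where $\mathfrak h$ sends $(w_1,\dots,w_d,v)$ to the path with $S_Y$-coordinate $v^{\leftarrow}$, $G_Y$-coordinate $v\circ v^{\leftarrow}(\cdot-)$, $D_Y$-coordinate $v\circ v^{\leftarrow}$, and $S_{A_j}$-coordinate $t\mapsto w_j(v^{\leftarrow}(t)-)+(t-v(v^{\leftarrow}(t)-))\,\mathbbm{1}\{\Delta w_j(v^{\leftarrow}(t))>0\}$. By the auxiliary $J_1$-results of Section~\ref{sec:Skorokhod}, $\mathfrak h$ is continuous — into $D([0,\infty),\bR^{d+3})$ with $J_1$, and uniformly on its $S_{A_j}$- and $S_Y$-coordinates — at every $(w_1,\dots,w_d,v)$ for which $v$ is strictly increasing, pure jump, with jumps dense in $[0,\infty)$, and each jump of $v$ carried by a single $w_j$: for such tuples the $S_{A_j}$- and $S_Y$-coordinates of $\mathfrak h(w_1,\dots,w_d,v)$ are continuous in $t$, whereas the $G_Y$- and $D_Y$-coordinates jump respectively at the right and the left endpoints of the excursion intervals of $v$, which never coincide because no two jumps of $v$ are adjacent. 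Since $\eta^{(\alpha,\beta)}$ a.s. has exactly these properties (Section~\ref{sec:bessel}: its L\'evy measure is infinite, so $\eta^{(\alpha,\beta)}$ is strictly increasing and pure jump with dense jumps), the continuous mapping theorem applied to the previous step yields \eqref{main1-1} with respect to $\mu_Y$ and identifies the limit, via the last-exit decomposition ($L^{(\alpha,\beta)}=(\eta^{(\alpha,\beta)})^{-1}$, $G^{(\alpha,\beta)}=\eta^{(\alpha,\beta)}\!\circ L^{(\alpha,\beta)}(\cdot-)$, $D^{(\alpha,\beta)}=\eta^{(\alpha,\beta)}\!\circ L^{(\alpha,\beta)}$, and $Z_j^{(\alpha,\beta)}=\eta_j\!\circ L^{(\alpha,\beta)}(\cdot-)+(\,\cdot\,-G^{(\alpha,\beta)})\,\mathbbm{1}\{\text{current excursion in }I_j\}$), as $\big((Z_j^{(\alpha,\beta)})_{j=1}^d,L^{(\alpha,\beta)},G^{(\alpha,\beta)},D^{(\alpha,\beta)}\big)$. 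Together with the reduction above, this proves the theorem.

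\emph{Expected main obstacle.} The delicate part is the last step: pinning down $\mathfrak h$ so that it simultaneously produces the continuous occupation/local-time coordinates and the jumping waiting-time coordinates, correctly handling the incomplete current excursion — including the identification of its ray, which is precisely where the $J_1$-strength (not merely $M_1$) of the excursion-sum limit is needed — and verifying $J_1$-continuity of $\mathfrak h$ at the good tuples. A secondary technical point is to extract from exponential continued fraction mixing the precise hypotheses of Tyran-Kami\'nska's theorem, notably the anti-clustering condition responsible for $J_1$-convergence.
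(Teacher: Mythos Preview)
Your proposal is correct and shares the paper's overall architecture---a functional stable limit for the excursion partial sums via Tyran-Kami\'nska, followed by a continuous-mapping recovery of the target processes---but the implementation differs in two places worth noting. First, the paper carries out the asymptotic-invariance reduction (Proposition~\ref{lem:st-dist}) at the level of the excursion sums $\big(\tfrac{1}{n}\eta_j(b_nt)\big)_j$ only (Lemma~\ref{lem:fl-excursion}), where the effect of $T$ is transparent (a local-time shift on $T^{-1}Y$, a bounded additive change on $T^{-1}A_j$), and then every subsequent deterministic mapping automatically preserves strong distributional convergence; your direct reduction for the full $F_n$ is workable but more awkward for the jumping $G_Y,D_Y$ coordinates, and note that Zweim\"uller's criterion requires convergence in $\mu$-measure, not $\mu_Y$-probability. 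Second, and more substantively, the paper recovers the occupation times by passing to \emph{inverses}: the discrete Williams formulae (Lemma~\ref{lem-dW}) express $S_{A_j}^{-1}$ and $S_Y^{-1}=\varphi$ as sums of compositions $\eta_i\circ\eta_j^{-1}$, so Lemma~\ref{lem:inverse} yields convergence of the inverse vector and Corollary~\ref{cor:inverse} returns to $(S_{A_j},S_Y)$---this completely sidesteps the straddling excursion and the ray-selector $\mathbbm{1}\{\Delta w_j(v^\leftarrow(t))>0\}$ that your map $\mathfrak h$ carries, which is exactly the obstacle you flag. Your direct route via $\mathfrak h$ can be made to work (the joint $J_1$-convergence of $(\Sigma_{j,n})_j$ does stabilize the selector), but the paper's inverse trick is what buys the cleaner argument; also bear in mind that the auxiliary results of Section~\ref{sec:Skorokhod} are distributional-convergence lemmas proved via Skorokhod coupling and Bingham's tightness criterion (Proposition~\ref{prop:finite-dim}), not pointwise $J_1$-continuity statements, so ``$\mathfrak h$ is continuous at good tuples'' should be read in that sense.
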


The proof of Theorem \ref{main1} will be given in Section \ref{sec:proof}. We immediately obtain the following corollary.

\begin{Cor}[Marginal convergence]\label{cor:main1}
Let $T$ be a CEMPT on $(X,\cB,\mu)$ and suppose that Assumptions \ref{ass:d-ray}, \ref{ass:reg-var} and \ref{ass:mixing} hold. 
Then, 
\begin{align}
    &\bigg(\bigg(\frac{1}{n}S_{A_j}(n)\bigg)_{j=1}^d,\;\bigg(\frac{1}{n}S_{A_j}(G_Y(n))\bigg)_{j=1}^d, \;
          \frac{1}{b_n}S_Y(n),\; \frac{1}{n}G_Y(n),\;\frac{1}{n}D_Y(n)\bigg)    
        \notag  \\
    &\underset{n\to\infty}{\overset{\cL(\mu)}{\Longrightarrow}}
    \Big(  \big(Z_j^{(\alpha,\beta)}(1)\big)_{j=1}^d, \;\big(Z_j^{(\alpha,\beta)}(G^{(\alpha,\beta)}(1))\big)_{j=1}^d,
     \;L^{(\alpha,\beta)}(1),\; G^{(\alpha,\beta)}(1), \;D^{(\alpha,\beta)}(1) \bigg),
    \quad\text{in $\bR^{2d+3}$}.
    \label{cor:main1-1}
\end{align}	
\end{Cor}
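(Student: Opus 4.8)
The plan is to derive Corollary~\ref{cor:main1} from Theorem~\ref{main1} by one application of the continuous mapping theorem: I evaluate the limiting processes at time $t=1$ and, for the middle block of coordinates, compose the rescaled occupation-time processes with the rescaled last-visit time $G_Y(n)/n$.

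First I would fix notation, writing the rescaled path in \eqref{main1-1} as $W^{(n)}=\big((W^{(n)}_j)_{j=1}^d,W^{(n)}_Y,W^{(n)}_G,W^{(n)}_D\big)$ with $W^{(n)}_j(t)=\tfrac1nS_{A_j}(nt)$, $W^{(n)}_Y(t)=\tfrac1{b_n}S_Y(nt)$, $W^{(n)}_G(t)=\tfrac1nG_Y(nt)$, $W^{(n)}_D(t)=\tfrac1nD_Y(nt)$, and the limit as $W=\big((Z^{(\alpha,\beta)}_j)_{j=1}^d,L^{(\alpha,\beta)},G^{(\alpha,\beta)},D^{(\alpha,\beta)}\big)$. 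Then I would introduce $\Phi\colon D([0,\infty),\bR^{d+3})\to\bR^{2d+3}$,
\[
 \Phi\big((w_j)_{j=1}^d,w_Y,w_G,w_D\big):=\Big((w_j(1))_{j=1}^d,\;(w_j(w_G(1)))_{j=1}^d,\;w_Y(1),\;w_G(1),\;w_D(1)\Big),
\]
and verify that $\Phi(W^{(n)})$ is exactly the random vector on the left of \eqref{cor:main1-1} while $\Phi(W)$ is the one on the right. The only coordinate that is not a plain evaluation is $w_j(w_G(1))$ at $w=W^{(n)}$: since $G_Y(n)$ is a non-negative integer with $G_Y(n)\le n$, we have $n\cdot\tfrac1nG_Y(n)=G_Y(n)$ and the floor in \eqref{sA} acts trivially, so this coordinate equals $\tfrac1nS_{A_j}(G_Y(n))$.

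Next I would invoke the continuous mapping theorem, which is compatible with strong distributional convergence $\overset{\cL(\mu)}{\Longrightarrow}$ since that notion is by definition ordinary weak convergence under every $\nu\ll\mu$; thus it suffices to check that $\Phi$ is measurable (clear, as $(v,s)\mapsto v(s)$ is jointly Borel on $D([0,\infty),\bR)\times[0,\infty)$) and continuous at $W$-almost every path. Let $E$ be the set of paths $w$ for which each $w_j$ and $w_Y$ is continuous on $[0,\infty)$ and $w_G,w_D$ are continuous at $t=1$. Then $\Phi$ is continuous at every $w\in E$: if $w^{(n)}\to w$ in $J_1$ then each coordinate converges in $J_1$, so $w^{(n)}_j(1)\to w_j(1)$, $w^{(n)}_Y(1)\to w_Y(1)$, $w^{(n)}_G(1)\to w_G(1)$, $w^{(n)}_D(1)\to w_D(1)$ because $t=1$ is a continuity point of the relevant limit coordinate; moreover $w^{(n)}_j\to w_j$ uniformly on compact sets (convergence in $J_1$ to a continuous limit is uniform on compacts), so, since $w^{(n)}_G(1)\to w_G(1)$ and $w_j$ is continuous, $w^{(n)}_j(w^{(n)}_G(1))\to w_j(w_G(1))$.

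Finally I would check that $W\in E$ almost surely. The processes $Z^{(\alpha,\beta)}_j$ and $L^{(\alpha,\beta)}$ have continuous paths; and since $|R^{(\alpha,\beta)}|\dist R^{(\alpha)}$ is a $(2-2\alpha)$-dimensional Bessel process with $2-2\alpha\in(0,2)$, one has $\bP[R^{(\alpha,\beta)}(1)=0]=0$ — equivalently, by \eqref{zeros-inv-local} and Fubini the zero set of $R^{(\alpha,\beta)}$ is Lebesgue-null a.s., so $t=1$ a.s.\ lies in the interior of an excursion interval — whence $G^{(\alpha,\beta)}$ and $D^{(\alpha,\beta)}$ are constant, in particular continuous, in a neighbourhood of $t=1$. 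Applying the continuous mapping theorem to $\Phi$ and \eqref{main1-1} then gives \eqref{cor:main1-1}. The argument is routine once Theorem~\ref{main1} is available; the only genuinely delicate point is this last local-constancy/path-continuity input, which is what licenses composing with the random time $G_Y(n)/n$.
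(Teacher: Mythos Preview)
Your proof is correct and is exactly the natural elaboration of what the paper has in mind: the paper gives no explicit proof, merely stating ``We immediately obtain the following corollary'' after Theorem~\ref{main1}. Your continuous-mapping argument---evaluating at $t=1$ and composing with the random time $W^{(n)}_G(1)$, using continuity of $Z_j^{(\alpha,\beta)}$ and $L^{(\alpha,\beta)}$ together with the a.s.\ local constancy of $G^{(\alpha,\beta)},D^{(\alpha,\beta)}$ near $t=1$---is the standard way to fill in the details, and all the delicate points (in particular that $\bP[R^{(\alpha,\beta)}(1)=0]=0$) are handled correctly.
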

Recall that Theorems \ref{thm:BPY} and \ref{thm:BPY-2} characterizes the joint-laws of the limit random variables in \eqref{cor:main1-1}.

\begin{Rem}\label{rem:sym-diff}
In addition, suppose that Borel subsets $A_1',\dots,A_d',Y'\in\cB([0,1])$ satisfy
\begin{align*}
	\mu(A_j\triangle A_j' )<\infty, \quad j=1,\dots,d,
	\quad\text{and}\quad
	\mu(Y')\in(0,\infty),
\end{align*}
where $A\triangle B=(A\setminus B)\cup (B \setminus A)$ denotes the symmetric difference of	$A$ and $B$. 
As we shall see in Remark \ref{rem:main-modif-proof}, the convergence \eqref{main1} is equivalent to the following:
\begin{align}
    &\bigg(\bigg(\frac{1}{n}S_{A_j'}(nt)\bigg)_{j=1}^d,\;
          \frac{1}{b_n}S_{Y'}(nt),\; \frac{1}{n}G_{Y'}(nt), \;\frac{1}{n}D_{Y'}(nt):t\geq0\bigg) 
    \notag\\
    &
    \underset{n\to\infty}{\overset{\cL(\mu)}{\Longrightarrow}}
    \Big(  \big(Z_j^{(\alpha,\beta)}(t)\big)_{j=1}^d,\;
     \frac{\mu(Y')}{\mu(Y)}L^{(\alpha,\beta)}(t),\;
     G^{(\alpha,\beta)}(t), \;D^{(\alpha,\beta)}(t)
      :t\geq0\Big),
    \quad\text{in $D([0,\infty),\bR^{d+3})$}.
    \label{rem:sym-diff-1}	
\end{align}
\end{Rem}

\begin{Rem}[Application to null-recurrent Markov chains on multiray]\label{rem:markov}
As shown in \cite[Subsection 2.5]{SY}, Assumptions \ref{ass:d-ray} and \ref{ass:reg-var} are satisfied in the setting of some class of null-recurrent Markov chains on multiray. In this setting, Assumption \ref{ass:mixing} is also satisfied, since the excursion processes of recurrent Markov chains away from a point are i.i.d. See for example \cite[Proposition 8.15]{Kal02}.	Hence Theorem \ref{main1} can be applied to these Markov chains.
\end{Rem}

%%%%%%%%%%%%%%%%%%%%%%%%%%%%%%%%%%%%%%%%%%%%%%%%%%%%%
%%%%%%%%%%%%%%%%%%%%%%%%%%%%%%%%%%%%%%%%%%%%%%%%%%%%%
\subsection{Application to interval maps with indifferent fixed points}
\label{subsec:int}
%%%%%%%%%%%%%%%%%%%%%%%%%%%%%%%%%%%%%%%%%%%%%%%%%%%%%
%%%%%%%%%%%%%%%%%%%%%%%%%%%%%%%%%%%%%%%%%%%%%%%%%%%%%
 
Let $d\geq2$ be a positive integer and $0=a_0=x_1<a_1<x_2<\dots<x_d=a_d=1$.
Set $J_1:=[a_0,a_1), J_2:=[a_1,a_2),\dots,J_d:=[a_{d-1},a_d]$.
Suppose that an interval map $T:[0,1]\to[0,1]$ satisfies the following two conditions: for each $j=1,\dots,d$,
\begin{itemize}
\item[(1)] The restriction $T|_{J_j}$ over $J_j$ can be extended to a $C^2$-bijective map $T_j:\bar{J_j}\to[0,1]$;
\item[(2)]
   $T_j x_j=x_j$, $T_j'x_j=1$ and $(x-x_j)T_j''x>0$ for any $x\in \bar{J_j}\setminus \{x_j\}$.
\end{itemize}
\begin{figure}[h]
\center
\includegraphics[height=5cm]{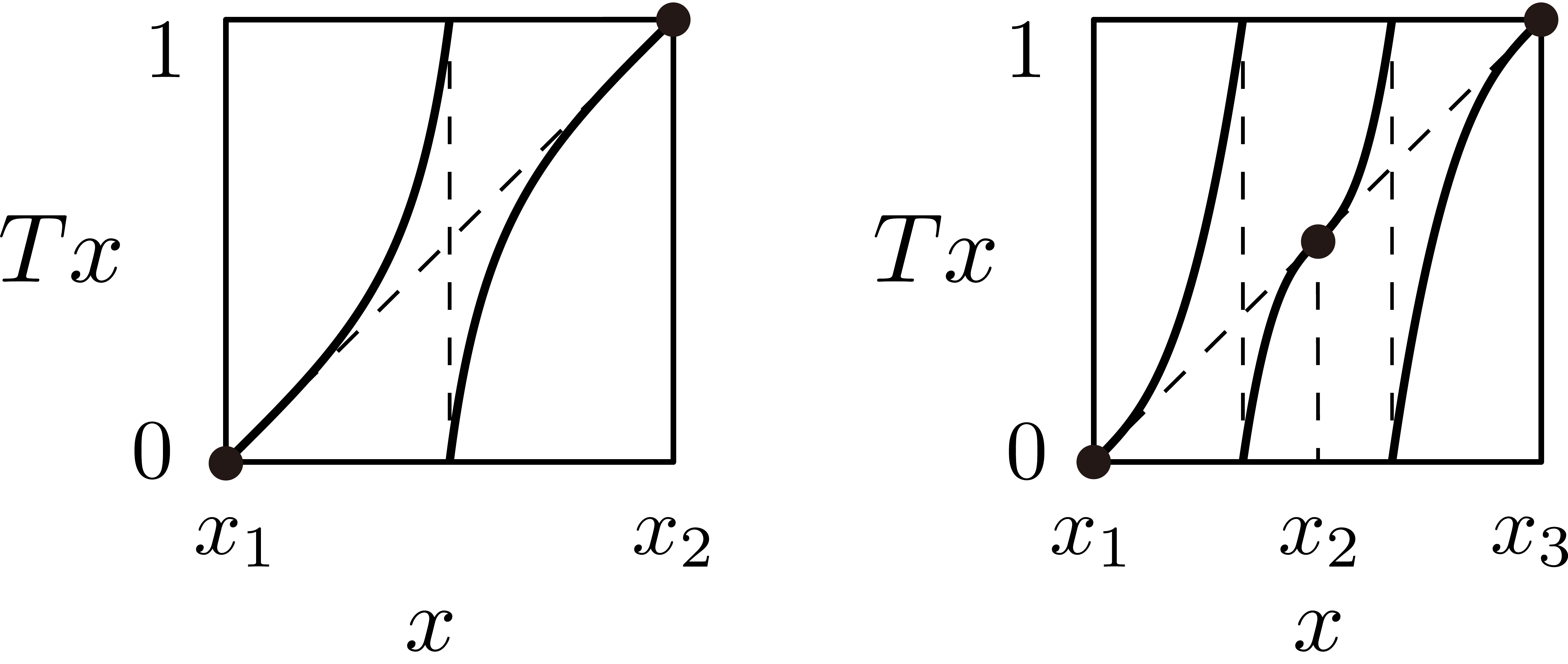}
\caption{Graphs of $T$ in the cases of $d=2$ and of $d=3$}	
\end{figure}
Therefore we have $T_j' x>1$ on $\bar{J_j}\setminus\{x_j\}$ for $j=1,\dots,d$.
Each $x_j$ is called an \emph{indifferent fixed point} and a \emph{regular source}. In this case, $T$ has a unique (up to scalar multiplication) $\sigma$-finite invariant measure $\mu(\rd x)$ equivalent to the Lebesgue measure $\rd x$, and $T$ is a CEMPT on $\big([0,1],\cB([0,1]),\mu\big)$. Any neighborhood of $x_j$ has infinite volume with respect to $\mu$. More specifically,
for any $\varepsilon>0$, it holds that
\begin{align}\label{infinite-measure}
	\mu((x_j-\varepsilon,x_j+\varepsilon))=\infty,
	\quad j=1,\dots,d,
	\quad\text{and}\quad
	\mu\bigg([0,1]\setminus \bigcup_{j=1}^d (x_j-\varepsilon, x_j+\varepsilon)\bigg)<\infty.
\end{align}
The density $\mu(\rd x)/\rd x$ has the version $h(x)$ which is continuous and positive on $[0,1]\setminus \{x_j\}_{j=1}^d$.
For the details, we refer the reader to Thaler \cite{T80, T83}.

\begin{Ass}[Regular variation]\label{ass:indiff}
	There exist constants $\alpha\in(0,1)$ and $c=(c_1,\dots,c_d)\in(0,\infty]^{d}\setminus\{(\infty,\dots,\infty)\}$ and an increasing function $\Psi:(0,\infty)\to(0,\infty)$ such that $\Psi\in\cR_{1+1/\alpha}(0+)$ and
 \begin{align}\label{reg:T}
 	|Tx-x|\underset{x\to x_j}{\sim} c_j \Psi\big(|x-x_j|\big),
 	\quad
 	j=1,\dots,d.
\end{align}
\end{Ass}
We will assume Assumption \ref{ass:indiff} from now on. For $j=1,\dots,d$, let us denote by $f_j:[0,1]\to \bar{J_j}$ the inverse function of $T_j$.
Set 
\begin{align}
  v_j
	&:=
	\begin{cases}
	\sum_{i\neq j} (h\circ f_j) (x_i) f'_j(x_i), 
	&j=1,d,
	\\
	2\sum_{i\neq j} (h\circ f_j) (x_i) f'_j(x_i),
	&j=2,\dots,d-1,	
	\end{cases}
    \\
  \beta_j
  &:=\frac{c_j^{-\alpha}v_j}{\sum_{i=1}^d c_i^{-\alpha}v_i},
  \quad j=1,\dots,d,
  \label{beta_j}
  \quad
  \text{and}
  \quad
  \beta
  :=
  (\beta_1,\dots,\beta_d),
 \\
 \label{phi}
 b_n
  &:=\frac{1}{\Gamma(1-\alpha)\sum_{i=1}^d c_i^{-\alpha}v_i}\inf\bigg\{s\geq1: \frac{\alpha s^{-1}}{\Psi(s^{-1})}>n \bigg\},
  \quad n\geq0.	
\end{align}
Using basic theory of regular variation (see \cite[Theorem 1.5.12]{BGT}), we have $(b_n)_{n\geq0}\in\cR_\alpha(\infty)$.
Let $A_1,\dots,A_d,Y\in \cB([0,1])$ be disjoint Borel subsets of $[0,1]$ such that, for some $\varepsilon>0$,
\begin{align}
(x_j-\varepsilon,x_j+\varepsilon)\cap[0,1]\subset A_j,
\quad j=1,\dots,d,
\quad
\text{and}\quad Y = [0,1]\setminus \sum_{j=1}^d A_j.
\label{eq:AY}
\end{align}
For $A\in\cB([0,1])$ and $t\geq0$, we define $S_A(t)$, $G_A(t)$ and $D_A(t)$ in the same way as \eqref{sA}, \eqref{def:G_A} and \eqref{def:D_A}, respectively.

We can now formulate our main theorem in the setting of intermittent interval maps.

\begin{Thm}[Functional convergence of occupation time processes]\label{main2}
Suppose that Assumption \ref{ass:indiff} holds. 
 Then, it holds that
\begin{align}
    &\bigg(\bigg(\frac{1}{n}S_{A_j}(nt)\bigg)_{j=1}^d,\;
          \frac{1}{b_n}S_{Y}(nt),\; \frac{1}{n}G_Y(nt), \; \frac{1}{n} D_Y(nt):t\geq0\bigg) 
    \notag\\
    \label{main2-1}
    &
    \underset{n\to\infty}{\overset{\cL(\mu)}{\Longrightarrow}}
    \Big(  \big(Z_1^{(\alpha,\beta)}(t)\big)_{j=1}^d,\;
     \mu(Y) L^{(\alpha,\beta)}(t), \;G^{(\alpha,\beta)}(t),\;D^{(\alpha,\beta)}(t) :t\geq0\Big),
    \quad\text{in $D([0,\infty),\bR^{d+3})$}.
\end{align}	
\end{Thm}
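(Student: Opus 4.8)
The plan is to deduce Theorem~\ref{main2} from the general result Theorem~\ref{main1} (together with Remark~\ref{rem:sym-diff}) by verifying that the dynamical system $\bigl([0,1],\cB([0,1]),\mu\bigr)$ with the decomposition $[0,1]=\sum_{j=1}^d A_j+Y$ from \eqref{eq:AY} satisfies Assumptions~\ref{ass:d-ray}, \ref{ass:reg-var} and \ref{ass:mixing}, and that the constants $\alpha$, $\beta=(\beta_1,\dots,\beta_d)$ and $b_n$ defined in \eqref{beta_j}--\eqref{phi} agree with the ones appearing in the general theorem. Since $T$ is a CEMPT on $\bigl([0,1],\cB([0,1]),\mu\bigr)$ and $\mu$ is finite away from any neighbourhood of the $x_j$ by \eqref{infinite-measure}, the basic structural hypotheses are in place; the factor $\mu(Y)$ in front of $L^{(\alpha,\beta)}$ in \eqref{main2-1} is exactly the rescaling predicted by \eqref{rem:sym-diff-1}, because $b_n$ in \eqref{phi} is normalised intrinsically through $\Psi$ rather than through $\mu(Y)\mu_Y[\varphi\ge n]$.

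First I would check Assumption~\ref{ass:d-ray}: with $A_j\supset(x_j-\varepsilon,x_j+\varepsilon)\cap[0,1]$ and $Y=[0,1]\setminus\sum_j A_j$, the dynamical separation property is a consequence of the branch structure. Each branch $T_j:\bar{J_j}\to[0,1]$ is monotone with a repelling fixed point at $x_j$, so near $x_j$ the orbit is pushed monotonically away from $x_j$; once we shrink $\varepsilon$ (using that $A_j$ can be taken inside the basin where $T$ is monotone and expels points toward the complement), an orbit leaving the vicinity of $x_i$ must traverse $Y$ before it can enter the vicinity of a different $x_j$. This is precisely the kind of verification carried out in \cite{SY} and \cite{T02}; I would invoke the continuity and positivity of the invariant density $h$ on $[0,1]\setminus\{x_j\}$ and the expanding property $T_j'>1$ on $\bar{J_j}\setminus\{x_j\}$ to make this rigorous, possibly after replacing the $A_j$ by slightly smaller sets and appealing to Remark~\ref{rem:sym-diff} to see that this does not affect the conclusion.

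Next comes the analytic heart: Assumption~\ref{ass:reg-var} and the identification of $\alpha$, $\beta_j$ and $b_n$. Using $Tx-x\sim c_j\Psi(|x-x_j|)$ with $\Psi\in\cR_{1+1/\alpha}(0+)$, one derives the asymptotics of the return-time tail $\mu_Y[\varphi>n]$ by the standard ``invariant-density near an indifferent fixed point'' computation: the map $y\mapsto$ (time to escape a neighbourhood of $x_j$) inverts the flow $\dot u = c_j\Psi(u)$, giving $\varphi$-tails governed by the inverse of $s\mapsto \alpha s^{-1}/\Psi(s^{-1})$, which is exactly what \eqref{phi} encodes; the contribution of the branch near $x_j$ to the wandering rate carries a weight proportional to $(h\circ f_j)(x_i)f_j'(x_i)$ summed over the preimages, which is where the $v_j$ in the definition of $\beta_j$ come from (with the factor $2$ for interior fixed points reflecting that they are approached from both sides). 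I would cite Thaler \cite{T80, T83} for the precise form of $h$ near $x_j$ and for the regular-variation estimate $w(n)\in\cR_{1-\alpha}(\infty)$, then compute $w_j(n)\sim\beta_j w(n)$ by splitting the wandering rate into the $d$ local contributions. Matching constants: $b_n$ in \eqref{phi} must be shown asymptotically equivalent to $1/(\Gamma(1-\alpha)\mu_Y[\varphi\ge n])$ up to the factor $\mu(Y)$, which is a Karamata/Tauberian bookkeeping exercise.

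The step I expect to be the main obstacle is Assumption~\ref{ass:mixing}, the exponential continued-fraction mixing of the excursion-length sequence $(\ell\circ T_Y^n)$ under $\mu_Y$. This requires showing that the first-return map $T_Y$ to $Y$ is a uniformly expanding (Rychlik/Gibbs--Markov) interval map with a big image property, so that its transfer operator has a spectral gap, and then that the function $\ell$ (which is constant on the cylinders of the induced partition) generates a filtration for which the $\psi$-mixing estimate of Assumption~\ref{ass:mixing} holds — this is the content one expects to extract from the appendices the author mentions (``several facts about uniformly expanding interval maps''). Concretely I would (i) recall that $T_Y$ restricted to the countable partition into return-branches is a full-branch expanding map with bounded distortion, hence continued-fraction mixing in the sense of Aaronson--Denker, and (ii) observe that $\cF_0^k$ and $\cF_{k+n}^\infty$ are generated by unions of such cylinders, so the mixing rate for $\ell$ inherits the exponential rate for the full $\sigma$-algebras. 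Once all three assumptions are verified and the constants identified, Theorem~\ref{main2} follows from \eqref{rem:sym-diff-1} applied with $A_j'=A_j$, $Y'=Y$, giving the stated limit with the local-time component scaled by $\mu(Y)/\mu(Y)=1$ inside the general framework but by $\mu(Y)$ relative to the intrinsic normalisation $b_n$ of \eqref{phi} — i.e.\ exactly \eqref{main2-1}. \qedhere
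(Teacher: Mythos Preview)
Your overall strategy is exactly the paper's: invoke Remark~\ref{rem:sym-diff} to reduce to one convenient choice of $A_1,\dots,A_d,Y$, verify Assumptions~\ref{ass:d-ray}--\ref{ass:mixing} for that choice, identify the constants via Thaler's asymptotics, and apply Theorem~\ref{main1}. The identification of $\beta_j$, $b_n$ and the factor $\mu(Y)$ is also correct.

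There is, however, a genuine gap at the point you yourself flag as the main obstacle, and it is more delicate than you indicate. You assert that the first-return map $T_Y$ is a \emph{full-branch} expanding map, and that mixing follows. This is not true for a generic $Y$ satisfying~\eqref{eq:AY}; the return branches typically do not map onto all of $Y$, so one needs a \emph{Markov} partition with the \emph{finite-image} and \emph{aperiodicity} properties of Appendix~\ref{sec:markov-map}. The paper obtains this by making very specific (non-obvious) choices of $A_j,Y$ and, crucially, by splitting into two cases. For $d\geq 3$ it takes $A_j=J_j\cap T(J_j)$ and partitions $Y$ into pieces $P_{i,j,\sigma,n}$ each of which $T_Y$ maps onto a single half-interval $Y_{j,\sigma}$; full-branch fails, but one checks $T_Y^4(P_{i,j,\sigma,n})=Y$ to get aperiodicity. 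For $d=2$ this same choice is \emph{not} aperiodic (Remark~\ref{rem:main2}: $T_Y$ permutes $Y_{1,+}$ and $Y_{2,-}$), so instead the paper uses a $2$-periodic point $\gamma$ of $T$ to cut $Y=[\gamma,T\gamma]$, which does make $T_Y$ genuinely full-branch (this is Zweim\"uller's construction, Lemma~\ref{lem:interval-2-exp-mixing}). Your sketch would need to incorporate this case distinction and the correct Markov (rather than full-branch) structure for $d\geq 3$ before Proposition~\ref{lem:Markov-exp-mixing} can be invoked.
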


The proof of Theorem \ref{main2} will be given in Section \ref{sec:proof2}. 

\begin{Ex}[Boole's transformation]\label{ex:boole}
Let us define the interval map $T$ by (\ref{boole}). This map satisfies our assumptions for $d=2$, $\alpha=1/2$, $c_1=c_2=1$ and $\Psi(s)=s^3$. It is easy to check that $\beta_1=\beta_2=1/2$ and $b_n=\sqrt{n/(2\pi)}$.  We use Theorem \ref{main2} to obtain the convergence (\ref{our-main}). Recall Remark \ref{rem:skew-bes2}.
\end{Ex}

\begin{Rem}
More precisely, we can also prove the following refinement of Theorem \ref{main2}. Let $A^+_{1}, A^-_{2},A^+_{2},\dots,A^-_{d},Y\in\cB([0,1])$ be disjoint Borel subsets of $[0,1]$ such that, for some $\varepsilon>0$,
\begin{align*}
	(x_j-\varepsilon,x_j)\subset A^-_{j}
	\quad\text{and}\quad
	(x_j,x_j+\varepsilon)\subset A^+_{j},
	\quad j=1,\dots,d,
\end{align*}
and $Y=[0,1]\setminus\sum_{j,\pm} A^\pm_{j}$.	Then we can formulate a functional limit theorem for
\begin{align*}
	\bigg(\bigg(\frac{1}{n}S_{A^\sigma_{j}}(nt)\bigg)_{\substack{\sigma=\pm\\ j=1,\dots,d}},\;
          \frac{1}{b_n}S_{Y}(nt),\;\frac{1}{n}G_Y(nt),\;\frac{1}{n}D_Y(nt):t\geq0\bigg),
\end{align*}
which is a functional and joint-distributional extension of \cite[Corollary 2.12]{SY}.
The statement and proof are almost the same as those of Theorem \ref{main2}, so we omit them. 
\end{Rem}

%%%%%%%%%%%%%%%%%%%%%%%%%%%%%%%%%%%%%%%%%%%%%%%%%%%%%
%%%%%%%%%%%%%%%%%%%%%%%%%%%%%%%%%%%%%%%%%%%%%%%%%%%%%
\section{Space of c\`adl\`ag functions and Skorokhod $J_1$-topology}
\label{sec:Skorokhod}
%%%%%%%%%%%%%%%%%%%%%%%%%%%%%%%%%%%%%%%%%%%%%%%%%%%%%
%%%%%%%%%%%%%%%%%%%%%%%%%%%%%%%%%%%%%%%%%%%%%%%%%%%%%
For the proof of Theorem \ref{main1}, 
we need several auxiliary results on the Skorokhod $J_1$-topology. For the basic discussion of the Skorokhod $J_1$-topology, we refer the reader to Bingham \cite[Section 2]{B71}, Ethier--Kurtz \cite[Chapter 3]{EK} and Jacod--Shiryaev \cite[Chapter VI]{JS}.

Let $d\geq1$ be a positive integer.
We will denote by $D([0,\infty),\bR^d)$ the space of $\bR^d$-valued c\`adl\`ag functions $x:[0,\infty)\to \bR^d$, that is,
\begin{align*}
	D([0,\infty),\bR^d)
	:=\{x=(x(t):t\geq0) :x(t+)=x(t)\in\bR^d, \;x(t-) \in \bR^d\;\; \text{for any $t\geq0$}\}.
\end{align*}    
Here, for $t\geq0$, 
\begin{align*}
   x(t+):=\lim_{s\downarrow t}x(s) 
   \quad
   \text{and}
   \quad   
    x(t-):=\begin{cases}\displaystyle
	         \lim_{s\uparrow t}x(s), & t>0,
	         \\
	         x(0), & t=0.
             \end{cases}
\end{align*}
Let $\Lambda'$ be the collection of strictly increasing functions $\lambda$ mapping $[0,\infty)$ onto itself, and let $\Lambda\subset \Lambda'$ be the collection of Lipschitz continuous functions $\lambda\in\Lambda'$ such that
\begin{align}\label{def:log-time}
\gamma(\lambda):=\sup_{0\leq s<t <\infty} 
                  \bigg|\log\frac{\lambda(t)-\lambda(s)}{t-s}\bigg|
               <\infty.	
\end{align}
Set $a\vee b=\max\{a,b\}$ and $a\wedge b=\min\{a,b\}$.
For $x, y \in D([0,\infty),\bR^d)$, $\lambda\in\Lambda$ and $u\geq0$, set
\begin{align}\label{def:pre-met}
	\rho(x,y,\lambda,u)
	:=
	1\wedge\sup_{t\geq0} \big|x(t\wedge u )-y(\lambda(t)\wedge u)\big|,
\end{align}
Let us define the metric $\rho(x,y)$ on $D([0,\infty),\bR^d)$ by
\begin{align}\label{def:metric}
	\rho(x,y):=\inf_{\lambda\in\Lambda}
	        \bigg[\;\gamma(\lambda)\vee
	              \int_0^\infty \rho(x,y,\lambda,u)e^{-u} \rd u\bigg].
\end{align}
Then the metric space $\big(D([0,\infty),\bR^d),\rho\big)$ is complete and separable. The Polish topology of $D([0,\infty),\bR^d)$ generated by $\rho$ is called the \emph{Skorokhod $J_1$-topology}.
Let $(x_n)_{n\geq1}\subset D([0,\infty),\bR^d)$ be a sequence and $x_\infty\in D([0,\infty),\bR^d)$. Then the $x_n$ converge to $x_\infty$ with respect to the Skorokhod $J_1$-topology, i.e., $\rho(x_n, x_\infty)\to0$, if and only if, there exists a sequence $(\lambda_n)_{n\geq1}\subset\Lambda'$ such that, for any $t_0>0$,
\begin{align}
	\sup_{0\leq t\leq t_0}|\lambda_n(t)-t|\underset{n\to\infty}{\to}0
	\quad\text{and}\quad
	\sup_{0\leq t\leq t_0}\big|x_n(\lambda_n(t))-x_\infty(t)\big|\underset{n\to\infty}{\to}0.
\label{conv:lambda}
\end{align}

Let $D_0\subset D([0,\infty),\bR)$ be the space of non-decreasing c\`adl\`ag functions $x:[0,\infty)\to[0,\infty)$ with $\lim_{t\to\infty}x(t)=\infty$. 
For $x=(x(t):t\geq0)\in D_0$, let  $x^{-1}=(x^{-1}(s):s\geq0)$ denotes the right-continuous inverse of $x$, that is,
\begin{align}
x^{-1}(s):=\inf\{t>0: x(t)>s\},
\quad s\geq0.
\label{def:r-inverse}
\end{align}

\begin{Lem}
For $x\in D_0$, let us define non-decreasing functions $(G(t):t\geq0)$ and $(D(t):t\geq0)$ by
\begin{align}
   G(t)&:=	 \sup\{x(s):x(s)\leq t\}, \quad t\geq0,
   \\
   D(t)&:= \inf\{x(s):x(s)>t\}, \quad t\geq0,	
\end{align}
where $\sup \emptyset=0$. Then $G(t+)=G(t)$ and $D(t+)=D(t)$. Hence $(G(t):t\geq0), (D(t):t\geq0)\in D_0$. 
\end{Lem}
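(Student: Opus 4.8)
The plan is to reduce the whole statement to right-continuity of $G$ and $D$, treated separately, since the remaining parts of membership in $D_0$ are routine. I would first record that $G$ and $D$ are non-decreasing in $t$, which is immediate: enlarging $t$ enlarges the set $\{x(s):x(s)\le t\}$ and shrinks the set $\{x(s):x(s)>t\}$. Monotone functions automatically have one-sided limits everywhere, so it remains only to prove $G(t+)\le G(t)$ and $D(t+)\le D(t)$; the reverse inequalities follow from monotonicity.

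For $D$ this is quick. Given $\varepsilon>0$, I would use $x(s)\to\infty$ to see that $\{x(s):x(s)>t\}\neq\emptyset$, then by definition of the infimum pick $s_0$ with $t<x(s_0)<D(t)+\varepsilon$. For every $s\in(t,x(s_0))$ one has $x(s_0)>s$, so $x(s_0)$ belongs to the set defining $D(s)$, whence $D(s)\le x(s_0)<D(t)+\varepsilon$; letting $s\downarrow t$ and then $\varepsilon\downarrow 0$ gives $D(t+)\le D(t)$. (Alternatively one could note $D=x\circ x^{-1}$ for the right-continuous inverse $x^{-1}$ and quote right-continuity of $x^{-1}$, but the direct argument avoids invoking that.)

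For $G$ I would introduce $\rho:=\inf\{s\ge 0:x(s)>t\}$, which is finite since $x(s)\to\infty$. From the definition of $\rho$ one gets $x(s)\le t$ for $s<\rho$; monotonicity of $x$ gives $x(s)>t$ for $s>\rho$; and right-continuity of $x$ then yields $x(\rho)\ge t$. Now split into two cases. If $x(\rho)=t$, then $x(\rho)$ lies in the set defining $G(t)$, so $G(t)\ge t$, while $G(s)\le s$ for every $s\ge 0$ directly from the definition of $G$; hence $G(t)=t$, and the same bound $G(s)\le s$ forces $G(t+)\le t=G(t)$. If instead $x(\rho)>t$, then since $x(s)\le t$ for $s<\rho$ and $x(s)\ge x(\rho)$ for $s\ge\rho$, the range of $x$ misses the open interval $(t,x(\rho))$; consequently the set $\{x(s):x(s)\le s'\}$ does not change as $s'$ ranges over $[t,x(\rho))$, so $G$ is constant on $[t,x(\rho))$ and $G(t+)=G(t)$.

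To finish, I would read off $G,D\in D_0$: the bounds $0\le G(t)\le t$ and $D(t)\ge t$ show both functions are $[0,\infty)$-valued; being monotone and right-continuous they are càdlàg; and $D(t)\ge t\to\infty$, while $G(t)\ge x(s_0)$ whenever $t\ge x(s_0)$ for a fixed $s_0$, so taking $x(s_0)$ arbitrarily large gives $G(t)\to\infty$. The only genuinely delicate point — the main obstacle — is the right-continuity of $G$, precisely because the supremum defining $G(t)$ need not be attained; the dichotomy above, according to whether $x$ jumps strictly over the level $t$ at the time $\rho=x^{-1}(t)$, is exactly what controls the range of $x$ near $t$ and resolves this.
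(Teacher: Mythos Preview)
Your proof is correct and follows essentially the same approach as the paper: both arguments split according to whether the level $t$ lies in the closure of the range of $x$ (your dichotomy $x(\rho)=t$ versus $x(\rho)>t$ is exactly the paper's dichotomy $G(t)=t$ versus $G(t)<t$, equivalently $D(t)=t$ versus $D(t)>t$), and in the ``gap'' case both observe that $G$ is locally constant to the right of $t$. Your treatment of $D$ via a single $\varepsilon$-argument is slightly more streamlined than the paper's two-case split, and you spell out the remaining $D_0$ conditions that the paper leaves implicit, but these are presentational rather than substantive differences.
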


\begin{proof}
We proceed by cases. 
\begin{enumerate}
\item
Assume $t<D(t)$. Then, for $t'\in(t,D(t))$, we have $D(t')=D(t)$ by the definition. Hence $D(t+)=D(t)$. 
\item
If $t=D(t)$, then there exists a strictly decreasing sequence $(s_n)_{n\geq1}$ such that $x(s_n)\downarrow t$, and hence $x(s_n)\leq D(x({s_n}))\leq x(s_{n-1})$. This implies that $D(t+)=\lim_{n\to\infty} D(x(s_n))=t=D(t)$. We also remark that $x(\lim_{n\to\infty}s_n)=t$ by the right-continuity of $x$, and hence  $G(t)=t$.

\item
Assume $G(t)<t$. By the contraposition of the above discussion, we have $t<D(t)$. For $t'\in (t,D(t))$, we have $G(t')=G(t)$ by the definition. Therefore $G(t+)=G(t)$.
\item
Assume $G(t)=t$. Let $(t_n)_{n\geq1}$ be a sequence with $t_n\downarrow t$, as $n\to\infty$. Then we have $t \leq G(t_n) \leq t_n$ and hence $G(t+)=\lim_{n\to\infty}G(t_n)=t=G(t)$.
\end{enumerate}
\end{proof}

\begin{Lem}\label{lem:conv-G-D}
For $n=1,\dots,\infty$, let $x_n\in D_0$.
Set
\begin{align}
   G_n(t)&:=	 \sup\{x_n(s):x_n(s)\leq t\}, \quad t\geq0,
   \\
   D_n(t)&:= \inf\{x_n(s):x_n(s)>t\}, \quad t\geq0,
\end{align}
Assume that $x_n\to x_\infty$, in $D([0,\infty),\bR)$. Then, it holds that 
\begin{align}
	G_n(t)\underset{n\to\infty}{\to} G_\infty(t), \quad \text{in $\bR$, for any continuity points $t\geq0$ of $G_\infty$},
	\label{conv:G}
	\\
	D_n(t)\underset{n\to\infty}{\to} D_\infty(t) \quad \text{in $\bR$, for any continuity points $t\geq0$ of $D_\infty$}.
	\label{conv:D}
\end{align}
\end{Lem}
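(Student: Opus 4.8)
The plan is to first reduce to locally uniform convergence, where the argument is cleanest, and then to establish two matching one‑sided bounds for $G_n$ (and symmetrically for $D_n$). For the reduction: by the characterization \eqref{conv:lambda} of $J_1$‑convergence there are $\lambda_n\in\Lambda'$ with $\sup_{0\le t\le t_0}|\lambda_n(t)-t|\to0$ and $\sup_{0\le t\le t_0}|x_n(\lambda_n(t))-x_\infty(t)|\to0$ for each $t_0>0$. Since every $\lambda_n$ is a bijection of $[0,\infty)$ onto itself, the range $\{x_n(\lambda_n(s)):s\ge0\}$ equals $\{x_n(s):s\ge0\}$, so replacing $x_n$ by $x_n\circ\lambda_n\in D_0$ leaves both $G_n$ and $D_n$ unchanged; hence I may assume $x_n\to x_\infty$ uniformly on compact sets. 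Then, fixing a continuity point $t$ of $G_\infty$ and choosing $T$ with $x_\infty(T)>t$ (possible since $x_\infty(s)\to\infty$), the monotonicity of $x_n$ confines the suprema/infima defining $G_n(t)$ and $D_n(t)$ to the compact interval $[0,T]$ for all large $n$, where the convergence is uniform.

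The easy halves are $\liminf_n G_n(t)\ge G_\infty(t)$ and $\limsup_n D_n(t)\le D_\infty(t)$. For the first, given $\varepsilon>0$ I would produce $s_0\in[0,T]$ with $x_\infty(s_0)<t$ (strict) and $x_\infty(s_0)>G_\infty(t)-\varepsilon$: this is immediate from the definition of the supremum $G_\infty(t)$ when $G_\infty(t)<t$, and when $G_\infty(t)=t$ this is exactly where continuity of $G_\infty$ at $t$ is used — it forces $G_\infty(t')\uparrow t$ as $t'\uparrow t$, hence the existence of attained values of $x_\infty$ arbitrarily close to $t$ from below. Then $x_n(s_0)\to x_\infty(s_0)<t$, so $x_n(s_0)\le t$ eventually and $G_n(t)\ge x_n(s_0)\to x_\infty(s_0)>G_\infty(t)-\varepsilon$; let $\varepsilon\downarrow0$. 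The bound for $D_n$ is entirely symmetric (witnesses $s_0$ with $x_\infty(s_0)>t$ and $x_\infty(s_0)<D_\infty(t)+\varepsilon$).

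The hard halves, $\limsup_n G_n(t)\le G_\infty(t)$ and $\liminf_n D_n(t)\ge D_\infty(t)$, are where I expect the real work to lie. I would argue by contradiction: since $G_n(t)\le t$ always, a failure forces $G_\infty(t)<t$ and yields, along a subsequence, points $s_n\in[0,T]$ with $x_n(s_n)$ in a fixed interval $(g',t]\subset(G_\infty(t),t]$; passing to a further subsequence, $s_n\to s_*$. Using monotonicity of $x_n$, uniformity of the convergence, and right-continuity of $x_\infty$, one gets $x_\infty(s_*-)\le\liminf_n x_n(s_n)$ and $\limsup_n x_n(s_n)\le x_\infty(s_*)$; combined with $g'<x_n(s_n)\le t$ this forces $x_\infty(s_*-)=G_\infty(t)<g'\le t<x_\infty(s_*)$, i.e.\ $x_\infty$ has a \emph{genuine jump} at $s_*$ straddling the whole interval $(G_\infty(t),t]$. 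The crucial observation is then that for such a jump, uniform convergence on a compact neighbourhood of $s_*$ together with the monotonicity of $x_n$ forces the range of $x_n$ to avoid a fixed open neighbourhood of each $c\in\big(x_\infty(s_*-),x_\infty(s_*)\big)$ once $n$ is large; covering the compact set $[g'-\kappa,t+\kappa]\subset\big(x_\infty(s_*-),x_\infty(s_*)\big)$ (small $\kappa>0$) by finitely many such neighbourhoods shows that the range of $x_n$ misses $(g',t]$ altogether, contradicting $x_n(s_n)\in(g',t]$. The $D_n$ statement follows by the mirror argument, the straddled gap now being $(t,D_\infty(t))$. The delicate bookkeeping — the cases $s_*=0$ or $t=0$, jumps of $x_\infty$ landing exactly at the level $t$ (harmless precisely because such $t$ are not continuity points of $G_\infty$), and checking that the time-change reduction genuinely does not disturb $G_n$ and $D_n$ — is where this otherwise transparent plan hides its effort; I expect passing from ``$x_\infty$ jumps over $t$'' to ``the range of $x_n$ avoids a neighbourhood of $t$'', which really needs the uniformity of the convergence rather than mere pointwise convergence, to be the main obstacle.
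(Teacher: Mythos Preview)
Your argument is correct, though it takes a longer route than the paper's. The key difference: rather than performing the time-change $x_n\mapsto x_n\circ\lambda_n$ and then splitting into two one-sided bounds with a compactness extraction, the paper works directly with the $J_1$ time-changes $\lambda_n$. In the case $G_\infty(t-)=G_\infty(t)<t$, the paper simply sets $s_0:=x_\infty^{-1}(t)$, observes that $G_\infty(t)=x_\infty(s_0-)<t<x_\infty(s_0)$, and uses the fact (immediate from \eqref{conv:lambda}) that $x_n(\lambda_n(s_0)-)\to x_\infty(s_0-)$ and $x_n(\lambda_n(s_0))\to x_\infty(s_0)$; monotonicity of $x_n$ then gives $G_n(t)=x_n(\lambda_n(s_0)-)$ \emph{exactly} for large $n$, and both halves follow in one stroke. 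The case $G_\infty(t)=t$ is handled just as you do. Your reduction to locally uniform convergence is a valid and attractive trick (the range, and hence $G_n,D_n$, is indeed invariant under composition with $\lambda_n\in\Lambda'$), but it obscures this direct identification. Note also that your final ``covering'' step is unnecessary: once you have witnesses $s_n$ with $x_n(s_n)\in(g',t]$ and uniform convergence, you get $x_\infty(s_n)\in(g'-\varepsilon,t+\varepsilon)$ for large $n$; but the range of $x_\infty$ misses the entire open interval $(G_\infty(t),D_\infty(t))$, which contains $(g'-\varepsilon,t+\varepsilon)$ for small $\varepsilon$ --- so the contradiction is already there, and the asserted equality $x_\infty(s_*-)=G_\infty(t)$ (which does not actually follow from your inequalities) is never needed.
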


\begin{Rem}
Let us give an example of $G_n(t)\not\to G_\infty(t)$ at a discontinuity point $t$ of $G_\infty$. Set
\begin{align*}
  x_n(s):= (1+n^{-1})\mathbbm{1}_{\{s\geq1\}},
  \quad n=1,\dots,\infty,\;s\geq0,	
\end{align*}
where $\infty^{-1}=0$. Then $x_n\to x_\infty$, in $D([0,\infty),\bR)$. 
Note that $0=G_\infty(1-)<G_\infty(1)=1$. In this case,
we have $0=G_n(1)\not\to G_\infty(1)$.
\end{Rem}

\begin{proof}[Proof of Lemma \ref{lem:conv-G-D}]
Let us prove \eqref{conv:G}.
In the cases of $t=0$ and of $t<x_\infty(0)$, it is obvious that $G_n(t)\to G_\infty(t)=0$. Let us consider the other cases. Let us take $(\lambda_n)\subset\Lambda'$ so that \eqref{conv:lambda} holds. We proceed by cases.
\begin{enumerate}
\item Assume $x_\infty(0)\leq G_\infty(t-)=G_\infty(t)<t$. Set $s_0:=x_\infty^{-1}(t)>0$. Then 
\begin{align}\label{conv:G-1}
G_\infty(t)=x_\infty(s_0-)<t<x_\infty(s_0).
\end{align}	
This implies
\begin{align}\label{conv:G-2}
  x_n(\lambda_n(s_0)-)\underset{n\to\infty}{\to} x_\infty(s_0-)<t
  \quad\text{and}\quad
  x_n(\lambda_n(s_0)) \underset{n\to\infty}{\to} x_\infty(s_0)>t.	
\end{align}
Hence $G_n(t)=x_n(\lambda_n(s_0)-)$ for sufficiently large $n$.
Therefore $G_n(t)\to G_\infty(t)$.
\item Assume $G_\infty(t-)=G_\infty(t)=t>0$. Then there exists a strictly increasing sequence $(s_m)_{m\geq1}$ such that $x_\infty(s_m)\uparrow t$. Then
\begin{align}
	\liminf_{n\to\infty} G_n(t)\geq \lim_{n\to\infty}x_n(\lambda_n(s_m))= x_\infty(s_m)\underset{m\to\infty}{\to}t.
\end{align}
Since $G_n(t)\leq t$, we obtain $G_n(t)\to t=G_\infty(t)$. 
\end{enumerate}
The proof of \eqref{conv:D} is almost the same. So we omit it.
%
%Let us prove \eqref{conv:D}. In the case of $t<x_\infty(0)$, it is obvious that $D_n(t)\to D_\infty(t)=x_\infty(0)$. Let us consider the other cases.
%\begin{enumerate}
%\setcounter{enumi}{2}
%\item Assume $x_\infty(0)\leq t<D_\infty(t-)=D_\infty(t)$. Set $s_0:=x^{-1}(t)$. Then we have
%\begin{align}
%  x_\infty(s_0-)\leq t< x_\infty(s_0)=D_\infty(t).
%\end{align}
%In this case, we obtain $D_n(t)\to D_\infty(t)$ by almost the same argument as above.
%
%\item Assume $t=D_\infty(t-)=D_\infty(t)$. Then there exists a strictly decreasing sequence $(s_m)_{m\geq1}$ such that $x_\infty(s_m)\downarrow t$.
%\end{enumerate}	
\end{proof}

Let $X=(X(t):t\geq0)$ be a $D([0,\infty),\bR^d)$-valued random variable, that is, a stochastic process whose path-function lies in $D([0,\infty),\bR^d)$. We say that $X$ is stochastically continuous if $\bP[X(t-)=X(t)]=1$, for any $t\geq0$. 
\begin{Prop}[Bingham {\cite[Theorem 3]{B71}}]\label{prop:finite-dim}
Let $d\geq1$ be a positive integer. For each $n=1,2,\dots,\infty$ and $j=1,\dots,d$, let $X_{n,j}=(X_{n,j}(t):t\geq0)$ be a $D_0$-valued random variable. Assume that
\begin{itemize}

\item[\rm (i)]
$X_{\infty,1},\dots,X_{\infty,d}$ are stochastically continuous.

\item[\rm (ii)]
The finite-dimensional marginal laws of $(X_{n,j}:j=1,\dots,d)$ converge as $n\to\infty$ to those of $(X_{\infty,j}:j=1,\dots,d)$. 
\end{itemize}
Then, it holds that
\begin{align*}
	(X_{n,j}:j=1,\dots,d)\underset{n\to\infty}{\dto}(X_{\infty,j}:j=1,\dots,d),
	\quad
	\text{in $D([0,\infty),\bR^d)$.}
\end{align*}
\end{Prop}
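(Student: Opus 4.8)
The plan is to establish the weak convergence in $(D([0,\infty),\bR^d),J_1)$ along the classical route: show (a) that $\{(X_{n,j})_{j=1}^d\}_n$ is tight and (b) that every weak subsequential limit has the finite-dimensional distributions of $(X_{\infty,j})_{j=1}^d$; then (i), (ii) and Prokhorov's theorem finish the argument. Step (b) is the easy half. If $X^\ast=(X^\ast_j)_j$ is a weak limit along a subsequence $n_k$, then for every $t$ outside an at most countable set $N$ (the fixed discontinuity times of $X^\ast$) the evaluation map $x\mapsto x(t)$ is continuous $\mathrm{Law}(X^\ast)$-a.e., so for $t_1,\dots,t_m\notin N$ we get $(X_{n_k,j}(t_\ell))_{j,\ell}\dto(X^\ast_j(t_\ell))_{j,\ell}$, while (ii) forces this limit to coincide in law with $(X_{\infty,j}(t_\ell))_{j,\ell}$. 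Since $N$ is countable and all processes are right-continuous, one passes $t_\ell\downarrow t$ to extend the agreement of finite-dimensional laws to every $t$; because $X_{\infty,j}$ is stochastically continuous, its law on $D$ is pinned down by these finite-dimensional laws, so $X^\ast\dist(X_{\infty,j})_j$.

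For step (a) I would first reduce to the scalar case via the sum. Put $Y_n:=\sum_{j=1}^d X_{n,j}$, which again lies in $D_0$. For each $j$ the difference $Y_n-X_{n,j}=\sum_{i\neq j}X_{n,i}$ is non-decreasing, so on any interval $I$ the oscillation of $X_{n,j}$ is dominated by that of $Y_n$; summing over $j$ and using equivalence of norms on $\bR^d$, for any $\delta$-sparse partition $\{t_i\}$ of $[0,T]$ one has $\max_i\sup_{s,t\in[t_{i-1},t_i)}\lvert X_n(s)-X_n(t)\rvert\le\max_i\bigl(Y_n(t_i-)-Y_n(t_{i-1})\bigr)$, whence the Skorokhod modulus satisfies $w'((X_{n,j})_j,\delta,T)\le w'(Y_n,\delta,T)$ (take the partition optimal for $Y_n$ simultaneously for all coordinates). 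Since moreover $0\le X_{n,j}(t)\le Y_n(t)$, the compact-containment condition for the vector follows from that for $Y_n$. Hence it suffices to prove $\{Y_n\}_n$ tight in $D([0,\infty),\bR)$, and by (ii) together with continuity of addition its finite-dimensional laws converge to those of $Y_\infty:=\sum_j X_{\infty,j}$, which is again stochastically continuous.

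Tightness of $\{Y_n\}$ is the heart of the proof and the step I expect to be the main obstacle, because the finite-dimensional hypothesis carries no information about oscillations and must be converted into such information using monotonicity. Compact containment is immediate: for fixed $T,\eta$, since $Y_n(T')\dto Y_\infty(T')$ for a chosen $T'>T$ we may pick $K$ with $\limsup_n\bP[Y_n(T')>K]<\eta$, and then $\sup_{[0,T]}\lvert Y_n\rvert=Y_n(T)\le Y_n(T')$. For the modulus, monotonicity gives $w'(Y_n,\delta,T)\le w(Y_n,2\delta,T)=\sup_{0\le s\le T-2\delta}\bigl(Y_n(s+2\delta)-Y_n(s)\bigr)\le 3\max_k\bigl(Y_n(\tau_{k+1})-Y_n(\tau_k)\bigr)$, where $\{\tau_k\}$ is a grid of mesh $\delta$ on $[0,T+2\delta]$; perturbing the $\tau_k$ slightly we may take them to be common continuity points of $Y_\infty$, so (ii) yields $\limsup_n\bP[w'(Y_n,\delta,T)>\eta]\le\bP\bigl[3\max_k(Y_\infty(\tau_{k+1})-Y_\infty(\tau_k))\ge\eta\bigr]$, and letting $\delta\downarrow0$ the right-hand side tends to $0$ by the regularity of the limit (vanishing of the oscillation modulus of $Y_\infty$, which is the content of stochastic continuity in the setting at hand). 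This verifies the Skorokhod tightness criterion for $\{Y_n\}$, hence for $\{(X_{n,j})_j\}$, and combined with step (b) completes the proof. The delicate point throughout is precisely this last translation: monotonicity is what lets one bound the genuinely two-time quantity $w'(Y_n,\delta,T)$ by finitely many one-dimensional increments, to which the hypothesis applies.
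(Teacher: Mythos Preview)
The paper does not give its own proof of this proposition: it is quoted from Bingham \cite[Theorem~3]{B71} and used as a black box. So there is nothing to compare against on the paper's side; the question is simply whether your argument stands on its own.

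Your reduction to the scalar case via $Y_n=\sum_j X_{n,j}$ is correct and efficient: for non-decreasing coordinates the $\ell^1$-oscillation of the vector on any subinterval equals the oscillation of $Y_n$, so $w'((X_{n,j})_j,\delta,T)$ is controlled by $w'(Y_n,\delta,T)$, and compact containment likewise transfers. The identification-of-limits step (b) is also fine.

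The gap is in your tightness argument for $\{Y_n\}$. You bound
\[
w'(Y_n,\delta,T)\le w(Y_n,2\delta,T)\le 3\max_k\bigl(Y_\infty(\tau_{k+1})-Y_\infty(\tau_k)\bigr)
\]
(in the $\limsup$ sense, after passing to the limit in distribution along the grid $\{\tau_k\}$), and then assert that the right-hand side tends to $0$ in probability as the mesh $\delta\downarrow 0$ ``by stochastic continuity''. This last step is false in general. Stochastic continuity only says $\bP[\Delta Y_\infty(t)\neq 0]=0$ for each fixed $t$; it does \emph{not} say the paths are continuous. If $Y_\infty$ has a jump of size $J>0$ somewhere in $[0,T]$---as it typically will in the applications of this proposition in the paper, e.g.\ for $G^{(\alpha,\beta)}$ and $D^{(\alpha,\beta)}$---then $\max_k\bigl(Y_\infty(\tau_{k+1})-Y_\infty(\tau_k)\bigr)\ge J$ for every grid, however fine. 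A Poisson process already furnishes a counterexample: it is stochastically continuous, yet $\max_k(Y_\infty(\tau_{k+1})-Y_\infty(\tau_k))\ge 1$ on $\{Y_\infty(T)\ge 1\}$ for every grid. So your chain of inequalities, while valid up to the penultimate step, throws away exactly the information needed: bounding $w'$ by the uniform modulus $w$ discards the freedom to place partition points at the (random) jump times, which is the whole point of $w'$.

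The fix is to control $w'(Y_n,\delta,T)$ directly rather than via $w$. One route (essentially Bingham's) is to use that for non-decreasing paths the three-point modulus
\[
w''(y,\delta,T)=\sup_{t_1\le t\le t_2\le t_1+\delta}\bigl(y(t)-y(t_1)\bigr)\wedge\bigl(y(t_2)-y(t)\bigr)
\]
dominates $w'$ up to endpoints and can be bounded by finitely many two-time increments $Y_n(s+h)-Y_n(s-h)$ at a \emph{fixed} finite set of times $s$; stochastic continuity of $Y_\infty$ then gives $\limsup_n\bP[Y_n(s+h)-Y_n(s-h)>\eta]\le\bP[Y_\infty(s+h)-Y_\infty(s-h)\ge\eta]\to 0$ as $h\downarrow 0$ for each fixed $s$, and a compactness/covering argument on $[0,T]$ finishes. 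Alternatively, one can invoke the Aldous-type criterion in \cite[VI.4.5]{JS} specialised to increasing processes. Either way, the essential point you are missing is that the partition realising $w'$ must be allowed to depend on the sample path (to straddle the big jumps of $Y_\infty$), and a deterministic grid cannot do this.
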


The following lemma is one of the keys for the strong distributional convergence of waiting times, as we shall see in Section \ref{sec:proof}.

\begin{Lem}\label{lem:dist-conv-G-D}
For $n=1,\dots,\infty$, let $X_n=(X_n(t):t\geq0)$ be a $D_0$-valued random variable. Set	
\begin{align}
G_n(t)
	&:=
	\sup\{X_{n}(s):X_{n}(s)\leq t\},\quad t\geq0,
	\\
	D_n(t)
	&:=
	\inf\{X_{n}(s): X_{n}(s)>t\}, \quad t\geq0.
\end{align}
Assume that 
\begin{itemize}
\item[\rm (i)] $G_\infty$ and $D_\infty$ are stochastically continuous.
\item[\rm (ii)] $X_n\underset{n\to\infty}{\dto} X_\infty$, in $D([0,\infty),\bR)$.
\end{itemize}
 Then it holds that
 \begin{align}
 (G_n,D_n)\underset{n\to\infty}{\dto}(G_\infty,D_\infty),
 \quad \text{in $D([0,\infty),\bR^2)$}.	
 \end{align}
\end{Lem}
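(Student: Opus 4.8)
The plan is to reduce the assertion to a convergence of finite-dimensional distributions and then invoke Bingham's criterion, Proposition~\ref{prop:finite-dim}. The reason a direct argument via a continuous-mapping theorem is unavailable is precisely the phenomenon exhibited in the Remark after Lemma~\ref{lem:conv-G-D}: the assignment $x\mapsto(G,D)$ is not continuous for the $J_1$-topology, since $G_n(t)$ may fail to converge to $G_\infty(t)$ at a discontinuity point $t$ of $G_\infty$. So one is forced to work with convergence at continuity points only, and to use that the limit is stochastically continuous to repair the gap.

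First I would apply the Skorokhod representation theorem. Since $D([0,\infty),\bR)$ is Polish and $X_n\dto X_\infty$ by hypothesis~(ii), we may assume, after passing to a suitable common probability space, that $X_n\to X_\infty$ almost surely in the $J_1$-topology. On this space Lemma~\ref{lem:conv-G-D} applies pathwise, so for almost every $\omega$ we have $G_n(t)(\omega)\to G_\infty(t)(\omega)$ at every continuity point $t$ of $s\mapsto G_\infty(s)(\omega)$, and likewise $D_n(t)(\omega)\to D_\infty(t)(\omega)$ at every continuity point of $D_\infty(\cdot)(\omega)$. (Recall that $G_n,D_n\in D_0$ by the lemma preceding Lemma~\ref{lem:conv-G-D}, so these are bona fide $D_0$-valued random variables, and likewise for $G_\infty,D_\infty$.)

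Next, fix finitely many times $0\le t_1<\dots<t_k$. By the stochastic continuity hypothesis~(i), almost surely each $t_i$ is a continuity point of both $G_\infty$ and $D_\infty$; combining this with the previous step, the random vector $(G_n(t_1),D_n(t_1),\dots,G_n(t_k),D_n(t_k))$ converges almost surely, hence in distribution, to $(G_\infty(t_1),D_\infty(t_1),\dots,G_\infty(t_k),D_\infty(t_k))$. Since weak convergence does not depend on the coupling, this shows that the finite-dimensional marginal laws of $(G_n,D_n)$ converge to those of $(G_\infty,D_\infty)$.

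Finally I would apply Proposition~\ref{prop:finite-dim} with $d=2$, $X_{n,1}:=G_n$ and $X_{n,2}:=D_n$ for $n=1,\dots,\infty$: hypothesis~(i) there is exactly assumption~(i) of the present lemma, and hypothesis~(ii) there is the finite-dimensional convergence just established. This yields $(G_n,D_n)\dto(G_\infty,D_\infty)$ in $D([0,\infty),\bR^2)$, as claimed. The only genuine obstacle is the one already flagged---the failure of $J_1$-continuity of $x\mapsto(G,D)$---which is circumvented by the detour through finite-dimensional distributions; the remaining steps are routine once one checks that Lemma~\ref{lem:conv-G-D} is genuinely applicable, i.e.\ that the Skorokhod coupling delivers pathwise $J_1$-convergence of the $X_n$ and hence (via that lemma) pointwise convergence of $G_n,D_n$ at the relevant continuity points.
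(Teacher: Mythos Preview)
Your proposal is correct and follows essentially the same route as the paper: apply the Skorokhod coupling to reduce to almost-sure $J_1$-convergence of the $X_n$, use Lemma~\ref{lem:conv-G-D} together with the stochastic continuity assumption~(i) to obtain convergence of finite-dimensional marginals of $(G_n,D_n)$, and then conclude via Proposition~\ref{prop:finite-dim}. Your write-up is slightly more explicit about why the continuous-mapping theorem fails and why the detour through finite-dimensional distributions is needed, but the argument is the same.
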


\begin{Rem}
For example, suppose that $X_\infty$ is a stable subordinator. Then for each $t>0$, it holds that 
\begin{align*}
   \bP[G_\infty(t-)=G_\infty(t), D_\infty(t-)=D_\infty(t)]
   \geq
   \bP\Big[t\notin \bar{\{X_\infty(s):s\geq0\}}\Big]=1.	
\end{align*}
Therefore the condition (i) of Lemma \ref{lem:dist-conv-G-D} is satisfied.	
\end{Rem}

\begin{proof}[Proof of Lemma \ref{lem:dist-conv-G-D}]
 By the Skorokhod coupling (see for example \cite[Theorem 4.30]{Kal02}), there is no loss of generality in assuming 
\begin{align}
	X_n \underset{n\to\infty}{\to}
	X_\infty,
	\quad \text{in $D([0,\infty),\bR)$, a.s.}
\end{align}
Then the condition (i) and Lemma \ref{lem:conv-G-D} implies that, for each $t\geq0$,
\begin{align}
  (G_n(t),D_n(t))
  \underset{n\to\infty}{\to}
  (G_\infty(t),D_\infty(t)),
  \quad
  \text{in $\bR^2$, a.s.}	
\end{align}
Hence the finite-dimensional marginal laws of $(G_n, D_n)$ converge as $n\to\infty$ to those of $(G_\infty,D_\infty)$. Therefore we use Proposition \ref{prop:finite-dim} to obtain the desired result.
\end{proof}

\begin{Lem}[Fujihara--Kawamura--Yano {\cite[Lemma 2.3]{FKY}}]\label{lem:cad-inv}
For $n=1,\dots,\infty$, let $x_n,y_n\in D_0$. 
	 Let us fix $t\geq0$. Assume that the following conditions are satisfied:
\begin{itemize}
   \item[\rm (i)]	
   $y_\infty(x^{-1}_\infty(t)-)=y_\infty(x^{-1}_\infty(t))$ and  $\begin{cases} x_\infty(0)=x^{-1}_\infty(0)=0,
 &\text{if $t=0$,}
 \\
	x^{-1}_\infty(t-)=x^{-1}_\infty(t),
	&\text{if $t>0$.}
\end{cases}
$
   \item[\rm (ii)] 
   $(x_n,y_n)\underset{n\to\infty}{\to} (x_\infty,y_\infty)$, in $D([0,\infty),\bR^2)$.
\end{itemize}
Then, it holds that
\begin{align*}
   y_n\big(x_n^{-1}(t)\big) 
   \underset{n\to\infty}{\to}
    y_\infty\big( x_\infty^{-1}(t)\big),
    \quad
    \text{in $\bR$}.
\end{align*}
\end{Lem}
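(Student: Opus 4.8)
The plan is to split the statement into two steps: (a) convergence of the right‑continuous inverses, $x_n^{-1}(t)\to x_\infty^{-1}(t)$ in $\bR$, and (b) passage to the limit inside $y_n$ evaluated at these points. The two halves of hypothesis (i) feed these two steps respectively: $x_\infty^{-1}(t-)=x_\infty^{-1}(t)$ makes the inversion behave continuously at $t$, while $y_\infty\big(x_\infty^{-1}(t)-\big)=y_\infty\big(x_\infty^{-1}(t)\big)$ says that the limit point $x_\infty^{-1}(t)$ is a continuity point of $y_\infty$, which is exactly what lets one evaluate a $J_1$‑convergent sequence there.

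First I would invoke (ii) together with the characterization \eqref{conv:lambda} for the $\bR^2$‑valued maps $(x_n,y_n)$ to fix a single time change $(\lambda_n)\subset\Lambda'$ with $\sup_{0\le u\le T}|\lambda_n(u)-u|\to0$ and $\sup_{0\le u\le T}\big(|x_n(\lambda_n(u))-x_\infty(u)|+|y_n(\lambda_n(u))-y_\infty(u)|\big)\to0$ for every $T>0$. Writing $s:=x_\infty^{-1}(t)$, step (a) then splits into two one‑sided bounds. For $\limsup_n x_n^{-1}(t)\le s$: for each $\varepsilon>0$ choose $u'\in[s,s+\varepsilon)$ with $x_\infty(u')>t$ (possible since $s=\inf\{u:x_\infty(u)>t\}$); then $x_n(\lambda_n(u'))\to x_\infty(u')>t$, so $x_n^{-1}(t)\le\lambda_n(u')\to u'$. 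For $\liminf_n x_n^{-1}(t)\ge s$ (non‑trivial only when $t,s>0$): the hypothesis $x_\infty^{-1}(t-)=s$ forces $x_\infty(s-\delta)<t$ for small $\delta>0$, whence $x_n(\lambda_n(s-\delta))<t$ eventually and, by monotonicity of $x_n$, $x_n^{-1}(t)\ge\lambda_n(s-\delta)\to s-\delta$. Letting $\varepsilon,\delta\downarrow0$ gives $x_n^{-1}(t)\to s$; the degenerate case $t=0$ (where (i) reads $x_\infty(0)=x_\infty^{-1}(0)=0$, forcing $x_\infty>0$ on $(0,\infty)$) is handled by the upper‑bound argument alone, the lower bound being automatic.

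For step (b) I would put $v_n:=\lambda_n^{-1}(x_n^{-1}(t))$; since uniform‑on‑compacts closeness of $\lambda_n$ to the identity transfers to $\lambda_n^{-1}$, step (a) gives $v_n\to s$, and then, choosing $T$ large enough to contain $\{v_n\}$,
\begin{align*}
\big|y_n(x_n^{-1}(t))-y_\infty(s)\big|
&=\big|y_n(\lambda_n(v_n))-y_\infty(s)\big|\\
&\le\big|y_n(\lambda_n(v_n))-y_\infty(v_n)\big|+\big|y_\infty(v_n)-y_\infty(s)\big|\\
&\le\sup_{0\le u\le T}\big|y_n(\lambda_n(u))-y_\infty(u)\big|+\big|y_\infty(v_n)-y_\infty(s)\big|,
\end{align*}
whose two terms both tend to $0$: the first by the choice of $(\lambda_n)$, the second because $v_n\to s$ and $s$ is a continuity point of the c\`adl\`ag function $y_\infty$ (by the first half of (i): left‑approaching subsequences of $v_n$ use $y_\infty(s-)=y_\infty(s)$, right‑approaching ones use right‑continuity). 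This is the assertion.

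The step I expect to be the crux is the lower bound in (a), $\liminf_n x_n^{-1}(t)\ge x_\infty^{-1}(t)$: the map $(x,y)\mapsto y\circ x^{-1}$ is genuinely discontinuous on $D_0\times D_0$, the failure occurring precisely when $x_\infty$ has a flat stretch at level $t$ — equivalently when $x_\infty^{-1}$ jumps at $t$ — since a small perturbation $x_n$ can then cross level $t$ at a time bounded away from $x_\infty^{-1}(t)$; the requirement $x_\infty^{-1}(t-)=x_\infty^{-1}(t)$ in (i) is exactly what rules this out, and $y_\infty(x_\infty^{-1}(t)-)=y_\infty(x_\infty^{-1}(t))$ rules out the analogous jump of $y_\infty$ at the moving evaluation point. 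Everything else — extracting a common time change from joint $J_1$‑convergence and transferring uniform‑on‑compacts control from $\lambda_n$ to $\lambda_n^{-1}$ — is routine bookkeeping.
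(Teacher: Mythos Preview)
The paper does not supply its own proof of this lemma; it merely quotes the statement and attributes it to Fujihara--Kawamura--Yano \cite[Lemma 2.3]{FKY}. Your argument is correct and is the standard one: reduce to (a) $x_n^{-1}(t)\to x_\infty^{-1}(t)$ via upper/lower bounds, using the continuity of $x_\infty^{-1}$ at $t$ precisely to rule out the flat-stretch obstruction, and (b) evaluate the jointly time-changed $y_n$ at the convergent points, using that $s=x_\infty^{-1}(t)$ is a continuity point of $y_\infty$. The bookkeeping with a single common time change $(\lambda_n)$ extracted from joint $J_1$-convergence in $D([0,\infty),\bR^2)$ is exactly the right device, and your identification of the lower bound in (a) as the step where the hypothesis does real work is accurate.
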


The following lemma is one of the keys for the strong distributional convergence of the occupation time processes. The proof is almost the same as that of Lemma \ref{lem:dist-conv-G-D}, so we omit it.

\begin{Lem}\label{lem:inverse}
Let $d\geq1$ be a positive integer. For each $n=1,2,\dots,\infty$ and $j=1,\dots,d$, let $X_{n,j}=(X_{n,j}(t):t\geq0)$ be a $D_0$-valued random variable. 
Assume that
\begin{itemize}
\item[\rm (i)]  $X_{\infty,j}(0)=X_{\infty,j}^{-1}(0)=0$, for $j=1,\dots,d$.

\item[\rm (ii)] $X^{-1}_{\infty,1},\dots,X^{-1}_{\infty,d}$ are stochastically continuous.

\item[\rm (iii)] $\bP[X_i(X^{-1}_j(t)-)=X_i(X^{-1}_j(t))]=1$, for any $t\geq0$ and for any distinct $i,j$.

\item[\rm (iv)]	$(X_{n,j}:j=1,\dots,d)
       \underset{n\to\infty}{\dto}
       (X_{\infty,j}:j=1,\dots,d)$, in $D([0,\infty),\bR^{d})$.
\end{itemize}
Then, it holds that	
\begin{align}\label{lem:inverse-1}
	\Big(X_{n,i}\big(X_{n,j}^{-1}(t)\big):t\geq0,\; i\neq j\Big)
       \underset{n\to\infty}{\dto}
       \Big(X_{\infty,i}\big(X_{\infty,j}^{-1}(t)\big):t\geq0,\; i\neq j\Big), 
       \notag\\
       \text{in $D([0,\infty),\bR^{d(d-1)})$}.
\end{align}
\end{Lem}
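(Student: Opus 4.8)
The plan is to follow the proof of Lemma~\ref{lem:dist-conv-G-D} line by line, with the composition-continuity lemma of Fujihara--Kawamura--Yano (Lemma~\ref{lem:cad-inv}) playing the role that Lemma~\ref{lem:conv-G-D} played there, and with Bingham's criterion (Proposition~\ref{prop:finite-dim}) again upgrading finite-dimensional convergence to $J_1$-convergence. First I would invoke the Skorokhod coupling (e.g.\ \cite[Theorem 4.30]{Kal02}): by hypothesis (iv) there is no loss of generality in assuming $(X_{n,j}:j=1,\dots,d)\to (X_{\infty,j}:j=1,\dots,d)$ in $D([0,\infty),\bR^d)$ almost surely. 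Since the coordinate projection $D([0,\infty),\bR^d)\to D([0,\infty),\bR^2)$ picking out the $j$th and $i$th coordinates is $J_1$-continuous — one reuses the same time-change $\lambda_n$ as in \eqref{conv:lambda}, the projection being $1$-Lipschitz — this gives, for each ordered pair $i\neq j$, the a.s.\ convergence $(X_{n,j},X_{n,i})\to (X_{\infty,j},X_{\infty,i})$ in $D([0,\infty),\bR^2)$.

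Next I would fix $t\geq0$ and an ordered pair $i\neq j$ and verify, on a full-measure event, the hypotheses of Lemma~\ref{lem:cad-inv} with $x_n:=X_{n,j}$ and $y_n:=X_{n,i}$. Condition (ii) of that lemma is the joint a.s.\ $J_1$-convergence just obtained. For condition (i): the requirement $y_\infty(x_\infty^{-1}(t)-)=y_\infty(x_\infty^{-1}(t))$ is precisely $X_{\infty,i}(X_{\infty,j}^{-1}(t)-)=X_{\infty,i}(X_{\infty,j}^{-1}(t))$, which holds a.s.\ by hypothesis (iii); for $t=0$ the remaining clause $x_\infty(0)=x_\infty^{-1}(0)=0$ is hypothesis (i), and for $t>0$ the remaining clause $x_\infty^{-1}(t-)=x_\infty^{-1}(t)$ is the a.s.\ statement $X_{\infty,j}^{-1}(t-)=X_{\infty,j}^{-1}(t)$, which is the stochastic continuity assumed in (ii). Lemma~\ref{lem:cad-inv} then yields $X_{n,i}(X_{n,j}^{-1}(t))\to X_{\infty,i}(X_{\infty,j}^{-1}(t))$ in $\bR$ a.s., for every fixed $t\geq0$ and every pair $i\neq j$; intersecting the finitely many exceptional null sets involved in any given finite collection of times and pairs, the finite-dimensional marginal laws of $\big(X_{n,i}(X_{n,j}^{-1}(\cdot)):i\neq j\big)$ converge to those of $\big(X_{\infty,i}(X_{\infty,j}^{-1}(\cdot)):i\neq j\big)$.

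Finally I would check the structural hypotheses of Proposition~\ref{prop:finite-dim} for the processes $X_{n,i}\circ X_{n,j}^{-1}$ ($i\neq j$, $n=1,\dots,\infty$). Each $X_{n,j}^{-1}$ is non-decreasing, right-continuous and tends to $\infty$ (since $X_{n,j}$ is finite-valued and non-decreasing with infinite limit), and $X_{n,i}$ is non-decreasing, right-continuous and tends to $\infty$; hence the composition lies in $D_0$. For stochastic continuity of the limit: fixing $t>0$, on the full-measure event where $X_{\infty,j}^{-1}(t-)=X_{\infty,j}^{-1}(t)$ and $X_{\infty,i}(X_{\infty,j}^{-1}(t)-)=X_{\infty,i}(X_{\infty,j}^{-1}(t))$ (hypotheses (ii) and (iii)), monotonicity gives $X_{\infty,j}^{-1}(s)\uparrow X_{\infty,j}^{-1}(t)$ as $s\uparrow t$, so $(X_{\infty,i}\circ X_{\infty,j}^{-1})(t-)=X_{\infty,i}(X_{\infty,j}^{-1}(t)-)=(X_{\infty,i}\circ X_{\infty,j}^{-1})(t)$; the case $t=0$ is trivial by the convention $x(0-)=x(0)$. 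Proposition~\ref{prop:finite-dim} then delivers \eqref{lem:inverse-1}. I expect the main obstacle to be exactly this bookkeeping around Lemma~\ref{lem:cad-inv}: one must secure its hypothesis (i) on a full-measure event for \emph{every} fixed $t\geq0$, not merely at continuity points, since Proposition~\ref{prop:finite-dim} requires convergence of \emph{all} finite-dimensional marginals — this is precisely why the stochastic-continuity assumption (ii) and the "no common jump at any time" assumption (iii) are imposed at arbitrary times rather than almost every time.
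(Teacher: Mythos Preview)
Your proposal is correct and follows exactly the route the paper intends: the paper omits the proof, saying only that it is ``almost the same as that of Lemma~\ref{lem:dist-conv-G-D}'', and your argument is precisely that adaptation---Skorokhod coupling, then Lemma~\ref{lem:cad-inv} in place of Lemma~\ref{lem:conv-G-D} to get convergence at each fixed time, then Proposition~\ref{prop:finite-dim} to upgrade to $J_1$. Your additional bookkeeping (projection continuity, verification that the compositions lie in $D_0$, and the derivation of stochastic continuity of the limiting compositions from hypotheses (ii) and (iii)) correctly supplies the details the paper leaves implicit.
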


\begin{Rem}
For example, suppose that $X_{\infty,1},\dots, X_{\infty,d}$ are independent stable subordinators. In this case, we can easily check that the conditions (i), (ii) and (iii) are satisfied.	
\end{Rem}

\begin{Cor}\label{cor:inverse}
	For $n=1,\dots,\infty$, let $X_n$ be a $D_0$-valued random variable. Assume that $X_{\infty}(0)=X_{\infty}^{-1}(0)=0$, $X_\infty$ and $X^{-1}_{\infty}$ are stochastically continuous, and $X_n\underset{n\to\infty}{\dto}X_\infty$, in $D([0,\infty),\bR)$.
Then, it holds that	$(X_n, X^{-1}_n)\underset{n\to\infty}{\dto} (X_\infty, X^{-1}_\infty)$, in $D([0,\infty),\bR^2)$.
\end{Cor}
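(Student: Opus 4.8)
The plan is to obtain Corollary \ref{cor:inverse} as a special case of Lemma \ref{lem:inverse} with $d=2$, using the identity function as an auxiliary second coordinate. Set $X_{n,1}:=X_n$ for $n=1,2,\dots,\infty$, and let $X_{n,2}:=e$ be the deterministic identity $e(t)=t$; note $e\in D_0$ since it is continuous, non-decreasing, $[0,\infty)$-valued and tends to $\infty$. Since $e^{-1}=e$ (the right-continuous inverse of the identity is the identity, with $e^{-1}(0)=0$), we have
\begin{align*}
X_{n,1}\big(X_{n,2}^{-1}(t)\big)=X_n(t)
\quad\text{and}\quad
X_{n,2}\big(X_{n,1}^{-1}(t)\big)=X_n^{-1}(t),
\quad t\geq0,
\end{align*}
so the conclusion \eqref{lem:inverse-1} of Lemma \ref{lem:inverse} for $d=2$ reads exactly $(X_n,X_n^{-1})\dto(X_\infty,X_\infty^{-1})$ in $D([0,\infty),\bR^2)$. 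It therefore suffices to verify hypotheses (i)--(iv) of that lemma.

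Hypotheses (i) and (ii) are immediate: $X_{\infty,1}(0)=X_{\infty,1}^{-1}(0)=X_\infty(0)=X_\infty^{-1}(0)=0$ and $e(0)=e^{-1}(0)=0$ give (i); $X_{\infty,1}^{-1}=X_\infty^{-1}$ is stochastically continuous by hypothesis, and $X_{\infty,2}^{-1}=e$ is deterministic and continuous, hence stochastically continuous, giving (ii). For (iii) with $\{i,j\}=\{1,2\}$: the case $i=1$, $j=2$ is $\bP[X_\infty(t-)=X_\infty(t)]=1$, i.e.\ the stochastic continuity of $X_\infty$; the case $i=2$, $j=1$ holds surely because $e$ is continuous, so $e\big((X_\infty^{-1}(t))-\big)=X_\infty^{-1}(t)=e\big(X_\infty^{-1}(t)\big)$.

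The only hypothesis requiring a short argument is the joint convergence (iv), namely $(X_n,e)\dto(X_\infty,e)$ in $D([0,\infty),\bR^2)$; recall that $J_1$-convergence of each coordinate does not in general imply joint $J_1$-convergence. I would deduce it from the continuous mapping theorem: the map $\Phi\colon D([0,\infty),\bR)\to D([0,\infty),\bR^2)$, $x\mapsto(x,e)$, is $J_1$-continuous, because if $x_n\to x$ with time-changes $\lambda_n\to\mathrm{id}$ (uniformly on compacts) realising $x_n\circ\lambda_n\to x$, then the \emph{same} $\lambda_n$ works for $\Phi$, since $e\circ\lambda_n=\lambda_n\to e$ uniformly on compacts by continuity of $e$; hence $\Phi(x_n)\circ\lambda_n=(x_n\circ\lambda_n,\lambda_n)\to(x,e)$. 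Applying $\Phi$ to $X_n\dto X_\infty$ yields (iv). No genuine obstacle is expected; the one point deserving attention is precisely this interplay between $J_1$-convergence and joint convergence, which the continuity of the deterministic coordinate $e$ resolves.
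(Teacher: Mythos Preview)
Your proof is correct. The paper states Corollary \ref{cor:inverse} without proof, immediately after Lemma \ref{lem:inverse}, so the intended argument is precisely to deduce it from that lemma; your choice of the deterministic identity $e$ as the second coordinate, together with the continuous-mapping argument that $x\mapsto(x,e)$ is $J_1$-continuous (the same time-changes $\lambda_n$ work for both coordinates because $e\circ\lambda_n=\lambda_n\to e$), is exactly the natural way to make this reduction.
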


%%%%%%%%%%%%%%%%%%%%%%%%%%%%%%%%%%%%%%%%%%%%%%%%%%%%%
\section{Proof of Theorem \ref{main1}}\label{sec:proof}
%%%%%%%%%%%%%%%%%%%%%%%%%%%%%%%%%%%%%%%%%%%%%%%%%%%%%

For the proof of Theorem \ref{main1}, we imitate the methods of Barlow--Pitman--Yor \cite{BPY}, Watanabe \cite{W95}, Fujihara--Kawamura--Yano \cite{FKY} and Yano \cite{Y17}. We will represent waiting times and occupation times in terms of excursion lengths. Combining these representations with a functional convergence of excursion lengths, we will obtain the desired result.

\subsection{Representation formulae}

We now return to the setting introduced in Section \ref{sec:notation}. 
Let us denote by $\varphi(t)=\varphi(t)(x)$ the $\lfloor t +1\rfloor$th return time of the orbit $(T^k x)_{k\geq0}$ for $Y$:
\begin{align}
  \varphi(t)
  &:=\sum_{k=0}^{\lfloor t \rfloor}\varphi\circ T_Y^k= S_Y^{-1}(t)=\min\{u\geq0:S_Y(u)=\lfloor t+1 \rfloor\},
  \quad t\geq0.
\end{align}
Recall that $S_Y^{-1}$ denotes the right-continuous inverse of $S_Y$. See \eqref{def:r-inverse}.   It is obvious that
\begin{align}
	\{k\geq1:T^k x \in Y\}
	=
	\{\varphi(t)(x):t\geq0\},
	\quad x\in X.
\end{align}
By the definition of $G_Y$ and $D_Y$, we immediately obtain the following representation formulae:

\begin{Lem}\label{lem:discrete-represent-G-D}
	Suppose that $T$ is a CEMPT on $(X,\cB,\mu)$ and Assumption \ref{ass:d-ray} holds. Then it holds that
\begin{align}
   G_Y(t)&=\sup\{\varphi(s):\varphi(s)\leq t\}, \quad t\geq0,
   \\
   D_Y(t)&=\inf\{\varphi(s):\varphi(s)> t\},
   \quad t\geq0.	
\end{align}
\end{Lem}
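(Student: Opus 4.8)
The statement to be proved is Lemma~\ref{lem:discrete-represent-G-D}, which expresses the last visit time $G_Y(t)$ and the first visit time $D_Y(t)$ for $Y$ in terms of the (discrete) return-time process $\varphi(s)=S_Y^{-1}(s)$. The plan is entirely bookkeeping: the point is that the set of times at which the orbit hits $Y$ is \emph{exactly} the image $\{\varphi(s):s\geq0\}$, and once this identity of sets is in hand, the definitions \eqref{def:G_A} and \eqref{def:D_A} of $G_Y$ and $D_Y$ as a sup and an inf over that hitting set translate verbatim into the claimed formulae. So the proof has two short movements: (1) verify $\{k\geq1:T^kx\in Y\}=\{\varphi(s)(x):s\geq0\}$; (2) substitute.

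\textbf{Step 1: the hitting set.} First I would recall that, by definition, $\varphi(s)=\sum_{k=0}^{\lfloor s\rfloor}\varphi\circ T_Y^k$ is the $\lfloor s+1\rfloor$-th return time of the orbit to $Y$, and that it is a step function, constant on each interval $[m,m+1)$, taking as its range precisely the successive return times $\varphi(0)(x)<\varphi(1)(x)<\dots$ (here one uses that $T$ is a CEMPT, so $\varphi,\ell_j<\infty$ $\mu$-a.e., hence $S_Y(u)\to\infty$ and every return time is finite, $\mu$-a.e.). Since $\varphi(\cdot)(x)$ is non-decreasing and the orbit visits $Y$ infinitely often, its image is exactly the (infinite, strictly increasing, discrete) set $\{k\geq1:T^kx\in Y\}$. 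This is the identity already displayed just above the lemma, so I would simply cite it; the only thing to note is that $\varphi(s)\geq\varphi(0)\geq1$, so no spurious value $0$ is introduced.

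\textbf{Step 2: substitution.} With $\{\varphi(s):s\geq0\}=\{k\geq1:T^kx\in Y\}$ established, observe that
\[
  G_Y(t)=\max\{1\le k\le t:T^kx\in Y\}
        =\sup\{\,k\in\{\varphi(s):s\ge0\}: k\le t\,\}
        =\sup\{\varphi(s):\varphi(s)\le t\},
\]
and likewise
\[
  D_Y(t)=\min\{k>t:T^kx\in Y\}
        =\inf\{\,k\in\{\varphi(s):s\ge0\}: k>t\,\}
        =\inf\{\varphi(s):\varphi(s)> t\}.
\]
The conventions $\max\emptyset=0$, $\sup\emptyset=0$ and $\min\emptyset=\inf\emptyset=\infty$ match on both sides, so the boundary cases (e.g.\ $t<\varphi(0)(x)$, where both sides of the $G_Y$ formula equal $0$) need no separate treatment. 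This completes the argument.

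\textbf{Expected obstacle.} Honestly, there is no real obstacle here --- the lemma is a definitional reformulation, and the only thing requiring a moment's care is the $\mu$-a.e.\ finiteness of all return times (so that the range of $\varphi(\cdot)$ really is all of $\{k\ge1:T^kx\in Y\}$ and not a proper initial segment), which is immediate from conservativity of the CEMPT. The slightly delicate point, if any, is making sure the edge conventions ($\sup\emptyset=0$ versus $\max\emptyset=0$, and the role of $t=0$) line up, but these are arranged to agree by design in \eqref{def:G_A}--\eqref{def:D_A}.
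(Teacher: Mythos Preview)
Your proposal is correct and is essentially a careful spelling-out of what the paper treats as immediate: the paper simply states ``By the definition of $G_Y$ and $D_Y$, we immediately obtain the following representation formulae'' and gives no further argument. Your two-step bookkeeping (identify the hitting set with the image of $\varphi(\cdot)$, then substitute) is exactly the content behind that one-line justification, so there is nothing to add or compare.
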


In addition, set
\begin{align}
	\eta_j(t)
	:=
	S_{A_j}(\varphi(t))
	=
	\sum_{k=0}^{\lfloor t \rfloor} \ell_j\circ T^{k}_Y,
	\quad
	t\geq0, \;j=1,\dots,d,
\end{align}
that is, $\eta_j(t)=\eta_j(t)(x)$ denotes the amount of time which the orbit spends on $A_j$ up to time $\varphi(t)=\varphi(t)(x)$. Since $T$ is a CEMPT, we have $(\eta_j(t):t\geq0)\in D_0$, $\mu_Y$-a.e.

\begin{Lem}[Discrete Williams formulae]\label{lem-dW}
Suppose that $T$ is a CEMPT on $(X,\cB,\mu)$ and Assumption \ref{ass:d-ray} holds. Then it holds that
\begin{align}\label{d-W}
  S^{-1}_{A_j}(t)
  &=
  \lfloor t+1\rfloor+\sum_{\substack{i=1,\dots,d,\\ i\neq j}}
  \eta_i\big(\eta_j^{-1}(t)\big)
              +\eta_j^{-1}(t),
  \quad t\geq0,\;j=1,\dots,d,
  \\
  \label{d-W0}
  \varphi(t)&=\lfloor t+1\rfloor +\sum_{i=1}^d \eta_i(t),
  \quad t\geq0.	
\end{align}
\end{Lem}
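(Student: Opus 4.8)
The plan is to prove the two formulae \eqref{d-W} and \eqref{d-W0} by directly unwinding the definitions of the various inverse processes, keeping careful track of floors and of the fact that all the relevant paths lie in $D_0$ (so right-continuous inverses behave well). I would handle \eqref{d-W0} first, since it is the simpler identity and its proof illustrates the bookkeeping needed for \eqref{d-W}.

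\textbf{Proof of \eqref{d-W0}.} By definition $\varphi(t)$ is the $\lfloor t+1\rfloor$th visit time of the orbit to $Y$. Up to and including that time, the orbit has made $\lfloor t+1\rfloor$ visits to $Y$ (contributing $\lfloor t+1\rfloor$ to the clock) and, in between, has performed the first $\lfloor t+1\rfloor$ excursions away from $Y$; by Assumption \ref{ass:d-ray} each step of each excursion lies in exactly one of $A_1,\dots,A_d$ (the excursion cannot visit $Y$ and the orbit never leaves $\sum_j A_j + Y$), so the total number of excursion steps up to time $\varphi(t)$ is $\sum_{i=1}^d \sum_{k=0}^{\lfloor t\rfloor}\ell_i\circ T_Y^k = \sum_{i=1}^d \eta_i(t)$. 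Adding the $\lfloor t+1\rfloor$ visits to $Y$ gives $\varphi(t)=\lfloor t+1\rfloor+\sum_{i=1}^d\eta_i(t)$.

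\textbf{Proof of \eqref{d-W}.} Fix $j$ and $t\ge 0$. The key observation is the ``time-change'' identity $S_{A_j}^{-1}(t)=\varphi\bigl(\eta_j^{-1}(t)\bigr)$: indeed $\eta_j=S_{A_j}\circ\varphi$, and both $\varphi$ and $\eta_j$ are right-continuous nondecreasing with jumps, so one checks from \eqref{def:r-inverse} that $S_{A_j}^{-1}=\varphi\circ\eta_j^{-1}$ — concretely, $S_{A_j}^{-1}(t)$ is the first time the orbit has accumulated $\lfloor t+1\rfloor$ steps in $A_j$, which occurs inside the $\bigl(\lfloor\eta_j^{-1}(t)\rfloor+1\bigr)$th return time for $Y$, i.e.\ at $\varphi(\eta_j^{-1}(t))$, because new $A_j$-steps are only created during excursions and $S_{A_j}$ is constant on the $Y$-visit times. (The main technical point to check carefully is that this holds \emph{with equality} including the floor terms; this is where I expect the bookkeeping to be most delicate, as one must verify that $S_{A_j}^{-1}(t)$ lands exactly on a return time to $Y$ and not strictly inside an $A_j$-block — which follows because $S_{A_j}$ increases by at most $1$ per step and the $\lfloor t+1\rfloor$th $A_j$-step is immediately followed, after finitely many steps, by a return to $Y$; more precisely $S_{A_j}^{-1}(t)=\min\{u:S_{A_j}(u)=\lfloor t+1\rfloor\}$ and $S_{A_j}$ is right-continuous, so this min is attained, and it is attained at a point where the next excursion with an $A_j$-step has just completed enough steps — here I would spell out the one-line induction on excursions.) Granting $S_{A_j}^{-1}(t)=\varphi(\eta_j^{-1}(t))$, apply \eqref{d-W0} with $t$ replaced by $\eta_j^{-1}(t)$:
\begin{align*}
S_{A_j}^{-1}(t)=\varphi\bigl(\eta_j^{-1}(t)\bigr)
=\lfloor \eta_j^{-1}(t)+1\rfloor+\sum_{i=1}^d\eta_i\bigl(\eta_j^{-1}(t)\bigr).
\end{align*}
Finally I separate the $i=j$ term: since $\eta_j^{-1}(t)$ is the right-continuous inverse of the integer-valued $\eta_j$, one has $\eta_j\bigl(\eta_j^{-1}(t)\bigr)=\lfloor t+1\rfloor$, while $\lfloor\eta_j^{-1}(t)+1\rfloor=\eta_j^{-1}(t)+1$ precisely when $\eta_j^{-1}(t)\in\bN$; I would reconcile the two floor conventions by noting that both sides are compared against the same $\lfloor t+1\rfloor$ and that the stated formula holds by inspection on each excursion interval, so that the bracketed term $\lfloor\eta_j^{-1}(t)+1\rfloor+\eta_j(\eta_j^{-1}(t))$ collapses to $\lfloor t+1\rfloor+\eta_j^{-1}(t)$. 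This yields
\begin{align*}
S_{A_j}^{-1}(t)=\lfloor t+1\rfloor+\sum_{\substack{i=1,\dots,d\\ i\neq j}}\eta_i\bigl(\eta_j^{-1}(t)\bigr)+\eta_j^{-1}(t),
\end{align*}
which is \eqref{d-W}.

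\textbf{Main obstacle.} The conceptual content is elementary — everything is a matter of ``following the orbit and counting'' — but the genuine difficulty is making the floor/ceiling and right-continuity conventions line up exactly, i.e.\ checking $S_{A_j}^{-1}=\varphi\circ\eta_j^{-1}$ and $\eta_j\circ\eta_j^{-1}(t)=\lfloor t+1\rfloor$ as honest equalities rather than up-to-$O(1)$ statements. I would discharge this by an explicit induction over successive excursions (equivalently, over the points of the range $\{\varphi(s):s\ge0\}$), verifying the identities on each half-open interval between consecutive return times to $Y$, which is the one place a full proof needs to be written out rather than sketched.
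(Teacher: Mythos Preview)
Your ``key observation'' $S_{A_j}^{-1}(t)=\varphi\bigl(\eta_j^{-1}(t)\bigr)$ is false, and so is the companion claim $\eta_j\bigl(\eta_j^{-1}(t)\bigr)=\lfloor t+1\rfloor$. The time $S_{A_j}^{-1}(t)$ is the instant of the $\lfloor t+1\rfloor$th visit to $A_j$; at that instant the orbit is in $A_j$, not in $Y$, so it cannot coincide with any return time $\varphi(m)$. Concretely, take $x\in Y$ with $Tx,T^2x\in A_j$ and $T^3x\in Y$: then $\varphi(0)=3$, $\eta_j(0)=2$, $\eta_j^{-1}(0)=0$, yet $S_{A_j}^{-1}(0)=1\neq 3=\varphi(0)$. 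The same example kills the second claim: $\eta_j(\eta_j^{-1}(0))=\eta_j(0)=2\neq 1=\lfloor 0+1\rfloor$. In general $\eta_j$ jumps by $\ell_j\circ T_Y^k$, which may exceed $1$, so $\eta_j\circ\eta_j^{-1}(t)$ overshoots $\lfloor t+1\rfloor$. Your two errors are of exactly opposite size, which is why you land on the correct formula, but neither intermediate identity holds, and your parenthetical attempt to justify the first (``the $\lfloor t+1\rfloor$th $A_j$-step is immediately followed \dots\ by a return to $Y$'') conflates the time of that $A_j$-step with the later return time.

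The paper's argument avoids this pitfall by \emph{not} trying to identify $S_{A_j}^{-1}(t)$ with a return time. Setting $N=\eta_j^{-1}(t)$, it first records the strict localization $\varphi(N-1)\le S_{A_j}^{-1}(t)<\varphi(N)$, and then evaluates the tautological decomposition $u=\sum_{i=1}^d S_{A_i}(u)+S_Y(u)$ directly at $u=S_{A_j}^{-1}(t)$: one has $S_{A_j}(u)=\lfloor t+1\rfloor$ trivially, $S_Y(u)=N$ since exactly $N$ returns to $Y$ have occurred, and $S_{A_i}(u)=\eta_i(N)$ for $i\neq j$ because the current (unfinished) excursion lies entirely in $A_j$ by dynamical separation, so no $A_i$-time has accrued since $\varphi(N-1)$. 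Summing gives \eqref{d-W} in one line. If you want to salvage your route, the correct time-change statement is $S_{A_j}^{-1}(t)=\varphi(N-1)+\bigl(\lfloor t+1\rfloor-\eta_j(N-1)\bigr)$, and then \eqref{d-W0} at $t=N-1$ together with $\eta_i(N-1)=\eta_i(N)$ for $i\neq j$ recovers \eqref{d-W}; but this is essentially the paper's computation rewritten.
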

\begin{proof}
Set $N:=\eta_j^{-1}(t)$, which is a non-negative integer, $\mu$-a.e. By the definition of $\eta_j$, we have
\begin{align}
S_{A_j}(\varphi(N-1))\leq t < S_{A_j}(\varphi(N)), 
\end{align}
that is,
\begin{align}\label{eq:11}
\varphi(N-1) \leq S_{A_j}^{-1}(t) < \varphi(N), 	
\end{align}
where it is understood that $\varphi(-1)=0$.
Combining \eqref{eq:11} with Assumption \ref{ass:d-ray}, we have
\begin{align}\label{d-Wa}
	S_{A_i} \big(S^{-1}_{A_j}(t)\big)=\eta_i(N), \quad i\neq j,
\end{align}
and
\begin{align}\label{d-Wb}
	S_Y\big(S^{-1}_{A_j}(t)\big)=N.
\end{align}
It also follows immediately that 
\begin{align}\label{d-Wc}
	S_{A_j}(S_{A_j}^{-1}(t))=\inf\{S_{A_j}(u):S_{A_j}(u)>t\}=\lfloor t+1\rfloor.
\end{align}
In addition, we have 
\begin{align}\label{d-W3}
	u =\sum_{i=1}^d S_{A_i}(u)+S_Y(u),
	\quad
	u\geq0.
\end{align}
Substituting $u=S^{-1}_{A_j}(t)$ in \eqref{d-W3} and using \eqref{d-Wa}, \eqref{d-Wb} and \eqref{d-Wc},
 we obtain \eqref{d-W}. We can easily obtain \eqref{d-W0} by the definition. 
\end{proof}

We now return to the setting introduced in Section \ref{sec:bessel}.
Recall that the zero set of $R^{(\alpha,\beta)}$ coincides with the closure of the image set of the inverse local time $\eta^{(\alpha,\beta)}$:
\begin{align}
	\{t\geq0:R^{(\alpha,\beta)}(t)=0\}
	=
	\bar{\{\eta^{(\alpha,\beta)}(s):s\geq0\}}.
\end{align}
Hence we immediately obtain the following lemma:

\begin{Lem}\label{lem:represent-G-D}
It holds that
\begin{align*}
	G^{(\alpha,\beta)}(t)
	&=
	\sup\{\eta^{(\alpha,\beta)}(s):\eta^{(\alpha,\beta)}(s)\leq t\}, \quad t\geq0,
	\\
	D^{(\alpha,\beta)}(t)
	&=\inf\{\eta^{(\alpha,\beta)}(s):\eta^{(\alpha,\beta)}(s)> t\}, \quad t\geq0.
\end{align*}
\end{Lem}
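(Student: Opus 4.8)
The plan is to obtain both identities by substituting the description of the zero set of $R^{(\alpha,\beta)}$ into the definitions of $G^{(\alpha,\beta)}$ and $D^{(\alpha,\beta)}$, followed by a short elementary check. Write $Z:=\{t\ge0:R^{(\alpha,\beta)}(t)=0\}$ and $A:=\{\eta^{(\alpha,\beta)}(s):s\ge0\}$ for the image of the inverse local time, so that \eqref{zeros-inv-local} reads $Z=\bar A$, and the definitions of $G^{(\alpha,\beta)}$, $D^{(\alpha,\beta)}$ become $G^{(\alpha,\beta)}(t)=\sup(\bar A\cap[0,t])$ and $D^{(\alpha,\beta)}(t)=\inf(\bar A\cap(t,\infty))$. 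Thus the lemma reduces to the two identities
\[
\sup\big(\bar A\cap[0,t]\big)=\sup\{a\in A:a\le t\},
\qquad
\inf\big(\bar A\cap(t,\infty)\big)=\inf\{a\in A:a>t\}.
\]

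The first step is to record the structure of $\bar A$. Since $\eta^{(\alpha,\beta)}$ is non-decreasing and right-continuous with $\eta^{(\alpha,\beta)}(0)=0$, a point of $\bar A\setminus A$ cannot be a limit of image points approaching it from above (that would force it to be attained, by right-continuity); hence $\bar A$ is obtained from $A$ by adjoining, for every jump time $s$ of $\eta^{(\alpha,\beta)}$, the left endpoint $\eta^{(\alpha,\beta)}(s-)$ of the maximal open interval $\big(\eta^{(\alpha,\beta)}(s-),\eta^{(\alpha,\beta)}(s)\big)$, which meets $A$ nowhere. Moreover each such adjoined point is the supremum of the image points strictly below it, and the right endpoint $\eta^{(\alpha,\beta)}(s)$ of each such interval lies in $A$.

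Given this, both identities follow by a routine case check on the position of $t$ relative to $\bar A$. For the first identity: if $t\in\bar A$, then $t\in\bar A\cap[0,t]$ and simultaneously $t=\sup\{a\in A:a\le t\}$ (trivially when $t\in A$, and because an adjoined point is a supremum of image points below it when $t\in\bar A\setminus A$), so both sides equal $t$; if $t\notin\bar A$, then $t$ lies in one of the open intervals $(b,c)$ above with $b\in\bar A$, and $\bar A\cap[0,t]=\bar A\cap[0,b]$ while $\{a\in A:a\le t\}=A\cap[0,b]$, both having supremum $b$. The second identity is symmetric: either $t\in\bar A$ is a right-accumulation point of $\bar A$, hence also of $A$ since adjoined points are accumulated from below by image points, so both infima equal $t$; or $t$ lies in $[b,c)$ for one of the intervals $(b,c)$ above, whose right endpoint $c\in A$ is the smallest point of $\bar A$, equivalently of $A$, strictly to the right of $t$, so both infima equal $c$. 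Combining the two identities yields Lemma~\ref{lem:represent-G-D}.

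I do not anticipate any genuine obstacle here; the only delicate point is keeping track of the left-limit values $\eta^{(\alpha,\beta)}(s-)$ that the closure adds at the jumps of the stable subordinator $\eta^{(\alpha,\beta)}$, and checking that they never alter the relevant supremum or infimum. This is exactly the kind of c\`adl\`ag bookkeeping already carried out for a general $x\in D_0$ in the first lemma of Section~\ref{sec:Skorokhod}; indeed, an equivalent way to phrase the argument is to apply that $G$--$D$ construction to $x=\eta^{(\alpha,\beta)}$ and then invoke \eqref{zeros-inv-local}.
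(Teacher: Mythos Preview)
Your proposal is correct and follows the same route as the paper: the paper simply states that the lemma is an immediate consequence of the identity $\{t\geq0:R^{(\alpha,\beta)}(t)=0\}=\overline{\{\eta^{(\alpha,\beta)}(s):s\geq0\}}$, and you have supplied precisely the elementary case analysis that justifies the word ``immediately''. The only point worth tightening is the phrasing of the second identity in the right-accumulation case, where you should make explicit that if $x_n\in\bar A\setminus A$ with $x_n\downarrow t$, then image points accumulate to each $x_n$ from below and hence eventually lie in $(t,x_n)$; otherwise the argument is complete.
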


Set
\begin{align}
    \eta_j^{(\alpha,\beta)}(s)
    :=
    Z^{(\alpha,\beta)}_j\big(\eta^{(\alpha,\beta)}(s)\big)
    =
    \sum_{u\leq s} \inf\{t>0: e_u^{(\alpha,\beta)}(t)=0\}\mathbbm{1}_{\{e^{(\alpha,\beta)}_u\in C([0,\infty),I_j)\}},
    \notag\\
    s\geq0,\;
    j=1,\dots,d.	
\end{align}
By the It\^o excursion theory, we know that $\eta^{(\alpha,\beta)}_1,\dots,\eta^{(\alpha,\beta)}_d$ are independent $\alpha$-stable subordinators with Laplace transforms
\begin{align}
  \bE[\exp(-\lambda \eta_j^{(\alpha,\beta)}(s))]
  =
  \exp(-\lambda^{\alpha}\beta_j s),
  \quad
  t\geq0,
  \;j=1,\dots,d.	
\end{align}

\begin{Lem}[Williams formulae]\label{lem:Wil}
It holds that
\begin{align}
	\big(Z_j^{(\alpha,\beta)}\big)^{-1}(t)
	&=
	t+\sum_{\substack{i=1,\dots,d,\\ i\neq j}} \eta^{(\alpha,\beta)}_i
	\Big(\big(\eta^{(\alpha,\beta)}_j\big)^{-1}(t)\Big),
	\quad
	t\geq0,\;j=1,\dots,d.
	\label{lem:Wil0}
 \\
  \eta^{(\alpha,\beta)}(t)
  &=
  \sum_{i=1}^d \eta^{(\alpha,\beta)}_i(t),
  \quad
  t\geq0.	
\end{align}
\end{Lem}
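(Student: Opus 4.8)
The plan is to prove the two identities separately, both by a decomposition of the process at its zeros via the Itô excursion theory, exactly in parallel with the discrete version (Lemma \ref{lem-dW}); indeed this lemma is the continuous-time limit of that one, and its proof should mirror it step by step. For the second identity $\eta^{(\alpha,\beta)}(t)=\sum_{i=1}^d \eta^{(\alpha,\beta)}_i(t)$, I would simply note that by construction $\eta^{(\alpha,\beta)}(t)=\sum_{u\le t}\inf\{r>0:e^{(\alpha,\beta)}_u(r)=0\}$ is the sum over all excursions up to local time $t$ of their lifetimes, and each such excursion lies in exactly one ray $I_i$; partitioning the sum according to which ray the excursion visits gives $\sum_{i=1}^d\sum_{u\le t}\inf\{r>0:e^{(\alpha,\beta)}_u(r)=0\}\mathbbm{1}_{\{e^{(\alpha,\beta)}_u\in C([0,\infty),I_i)\}}=\sum_{i=1}^d\eta^{(\alpha,\beta)}_i(t)$, which is the definition of the right-hand side. (Equivalently: $Z^{(\alpha,\beta)}_i(\eta^{(\alpha,\beta)}(s))$ summed over $i$ recovers $\eta^{(\alpha,\beta)}(s)$ since $\sum_i Z^{(\alpha,\beta)}_i=\mathrm{id}$ on $[0,\infty)$, as the $I_i$ cover the whole multiray.)

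For the first identity, fix $j$ and $t\ge0$ and set $s:=\big(\eta^{(\alpha,\beta)}_j\big)^{-1}(t)$. The quantity $\big(Z^{(\alpha,\beta)}_j\big)^{-1}(t)$ is the first real time at which the occupation of $I_j$ exceeds $t$; I would argue that this occurs at the local-time level $s$, i.e. during the excursion straddling inverse local time $\eta^{(\alpha,\beta)}(s-)$, and more precisely that $\big(Z^{(\alpha,\beta)}_j\big)^{-1}(t)=\eta^{(\alpha,\beta)}(s)$ up to the (measure-zero) excursion-interior correction, which here vanishes because the right-continuous inverse lands exactly at a zero of $R^{(\alpha,\beta)}$ when one approaches from the $I_j$-occupation side (the excursions into $I_j$ contribute to $Z^{(\alpha,\beta)}_j$ while excursions into $I_i$, $i\neq j$, and the idle set do not). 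Then evaluating $t'=\big(Z^{(\alpha,\beta)}_j\big)^{-1}(t)=\eta^{(\alpha,\beta)}(s)$ and using the occupation identity $t'=\sum_{i=1}^d Z^{(\alpha,\beta)}_i(t')$ together with $Z^{(\alpha,\beta)}_j(t')=Z^{(\alpha,\beta)}_j(\eta^{(\alpha,\beta)}(s))=\eta^{(\alpha,\beta)}_j(s)=t$ and, for $i\neq j$, $Z^{(\alpha,\beta)}_i(t')=Z^{(\alpha,\beta)}_i(\eta^{(\alpha,\beta)}(s))=\eta^{(\alpha,\beta)}_i(s)=\eta^{(\alpha,\beta)}_i\big((\eta^{(\alpha,\beta)}_j)^{-1}(t)\big)$ yields $\big(Z^{(\alpha,\beta)}_j\big)^{-1}(t)=t+\sum_{i\neq j}\eta^{(\alpha,\beta)}_i\big((\eta^{(\alpha,\beta)}_j)^{-1}(t)\big)$, as claimed.

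The main obstacle I expect is the careful bookkeeping at the endpoint: one must check that applying $\big(Z^{(\alpha,\beta)}_j\big)^{-1}$ (a right-continuous inverse) lands precisely on a point of the zero set $\overline{\{\eta^{(\alpha,\beta)}(s):s\ge0\}}$, so that the substitution $t'=\eta^{(\alpha,\beta)}(s)$ with $s=(\eta^{(\alpha,\beta)}_j)^{-1}(t)$ is exact rather than off by the length of a partial excursion. This is the continuous analogue of the floor-function juggling around \eqref{eq:11}–\eqref{d-Wc} in Lemma \ref{lem-dW}: there the correction term was $\lfloor t+1\rfloor$ and here it collapses to $t$ because $Z^{(\alpha,\beta)}_j$, restricted to the closure of $\bigcup_s(\eta^{(\alpha,\beta)}(s-),\eta^{(\alpha,\beta)}(s))$ over those $s$ with an $I_j$-excursion, is continuous and strictly increasing, so its right-continuous inverse is genuinely a left inverse on the relevant range. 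Alternatively, one can avoid this point entirely by deriving the formula as a distributional limit of \eqref{d-W} under the scaling of Section \ref{sec:proof}, but a direct excursion-theoretic argument is cleaner and self-contained; I would present the latter.
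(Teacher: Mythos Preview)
The paper does not give its own proof of this lemma; it simply refers the reader to Yano \cite[Theorem 3.1]{Y17} and Watanabe \cite[Proposition 1]{W95}. Your direct excursion-theoretic argument is exactly the kind of computation carried out in those references, and your proof of the second identity $\eta^{(\alpha,\beta)}=\sum_i \eta_i^{(\alpha,\beta)}$ is correct as written.

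For the first identity, however, there is a slip in the intermediate step that you should repair. You assert that $t':=(Z_j^{(\alpha,\beta)})^{-1}(t)$ ``lands exactly at a zero of $R^{(\alpha,\beta)}$'' and equals $\eta^{(\alpha,\beta)}(s)$, and that $\eta_j^{(\alpha,\beta)}(s)=t$. Both are false for Lebesgue-a.e.\ $t$: since the closed range of the stable subordinator $\eta_j^{(\alpha,\beta)}$ has Lebesgue measure zero, for a.e.\ $t$ one has $\eta_j^{(\alpha,\beta)}(s-)<t<\eta_j^{(\alpha,\beta)}(s)$ with $s=(\eta_j^{(\alpha,\beta)})^{-1}(t)$, so there is an $I_j$-excursion at local-time level $s$, and $Z_j^{(\alpha,\beta)}$ crosses level $t$ strictly \emph{inside} that excursion; thus $t'\in(\eta^{(\alpha,\beta)}(s-),\eta^{(\alpha,\beta)}(s))$, not at an endpoint.

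The formula is nevertheless correct, and the fix is simple. Keep $Z_j^{(\alpha,\beta)}(t')=t$ (this is just continuity of $Z_j^{(\alpha,\beta)}$). For $i\neq j$, the point is not that $t'$ is a zero, but that $Z_i^{(\alpha,\beta)}$ is constant on the closure of the $I_j$-excursion interval $[\eta^{(\alpha,\beta)}(s-),\eta^{(\alpha,\beta)}(s)]$, hence
\[
Z_i^{(\alpha,\beta)}(t')
=Z_i^{(\alpha,\beta)}\big(\eta^{(\alpha,\beta)}(s-)\big)
=\eta_i^{(\alpha,\beta)}(s-)
=\eta_i^{(\alpha,\beta)}(s),
\]
the last equality because the excursion at level $s$ lies in $I_j$, so $\eta_i^{(\alpha,\beta)}$ has no jump there. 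Substituting into $t'=\sum_{i}Z_i^{(\alpha,\beta)}(t')$ gives \eqref{lem:Wil0} for a.e.\ $t$, and then for all $t$ by right-continuity of both sides. With this correction your argument is complete and self-contained.
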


 For the proof, we refer the reader to Yano \cite[Theorem 3.1]{Y17}. See also Watanabe \cite[Proposition 1]{W95}.
The prototype of the equality \eqref{lem:Wil0} was obtained by Williams \cite[Theorem 1]{Wi69}.

\subsection{Functional convergence of partial sum of excursion lengths}

We will recall a sufficient condition for strong distributional convergence:

\begin{Prop}[Zweim\"uller {\cite[Theorem 1]{Z07b}}]\label{lem:st-dist}
Let $T$ be a CEMPT on $(X, \cB, \mu)$, let $\nu_0\ll \mu$ be a probability measure on $X$, and let $(U,\rho)$ be a separable metric space.  
Assume that measurable functions $F_n: X \to U$ $(n\in\bN)$ satisfy the following:
\begin{enumerate}
\item[\rm (i)]
 $F_n \underset{n\to\infty}{\overset{\nu_0}{\Longrightarrow}} \zeta$ for some
  $U$-valued random variable $\zeta$.
 
\item[\rm (ii)]
For any $\varepsilon>0$, it holds that
     $\mu \big[\rho(F_n \circ T, F_n)>\varepsilon\big] 
      \underset{n\to\infty}{\to} 0.$
\end{enumerate}
Then, it holds that 
    	$F_n \underset{n\to\infty}{\overset{\cL(\mu)}{\Longrightarrow}} \zeta.$
\end{Prop}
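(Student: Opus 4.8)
The plan is to prove the strong distributional convergence $F_n \overset{\cL(\mu)}{\Longrightarrow} \zeta$ by combining the hypotheses (i) and (ii) with the ergodicity of $T$ and an abstract transfer argument. First I would recall the structural fact that since $\nu_0 \ll \mu$, condition (i) already gives the desired convergence along one reference measure; the whole point is to upgrade it to \emph{every} $\nu \ll \mu$. The standard mechanism, due to Zweim\"uller and going back to ideas of Thaler and Eagleson, is that an asymptotically $T$-invariant sequence of distributions behaves, in the limit, like the invariant (here $\sigma$-finite) measure, so the limit law cannot depend on the choice of $\nu_0$. I would set up the argument as follows.

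First I would reduce to bounded Lipschitz test functions: it suffices to show that for every bounded Lipschitz $g : U \to \bR$ and every probability $\nu \ll \mu$, $\int g(F_n)\,\rd\nu \to \bE[g(\zeta)]$. By (i), $\int g(F_n)\,\rd\nu_0 \to \bE[g(\zeta)]$, so it is enough to prove that $\int g(F_n)\,\rd\nu - \int g(F_n)\,\rd\nu_0 \to 0$ for all such $g$ and all $\nu \ll \mu$. The key step is condition (ii): writing $g_n := g \circ F_n$, the hypothesis $\mu[\rho(F_n\circ T, F_n) > \varepsilon] \to 0$ together with the Lipschitz bound on $g$ yields that $g_n \circ T - g_n \to 0$ in $\mu$-measure (on sets of finite $\mu$-measure, which is all that matters since $\nu \ll \mu$ and $\nu$ is finite). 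Iterating, $g_n \circ T^k - g_n \to 0$ in measure for each fixed $k$, and more usefully the Ces\`aro averages $\frac1N \sum_{k=0}^{N-1} g_n \circ T^k$ stay close to $g_n$ in an appropriate sense.

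The heart of the argument is then: because $T$ is a CEMPT and the functions $g_n$ are uniformly bounded and asymptotically $T$-invariant, the integrals $\int g_n \,\rd\nu$ become asymptotically insensitive to the density $\rd\nu/\rd\mu$. Concretely, one approximates a general $\nu \ll \mu$ by measures of the form $\rd\nu' = (\rd\nu_0/\rd\mu) \cdot (\text{something})$, uses the transfer (dual) operator $\hat T$ of $T$, and exploits that $\hat T^n (\rd\nu/\rd\mu) / \hat T^n(\rd\nu_0/\rd\mu) \to 1$ in $L^1(\mu)$-type senses characteristic of conservative ergodic systems (this is exactly the content underlying Hopf's ratio ergodic theorem / Zweim\"uller's lemma). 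I would either cite Zweim\"uller's own proof skeleton or reproduce the short measure-theoretic lemma: if $(g_n)$ is uniformly bounded and $g_n \circ T - g_n \to 0$ in $\mu$-measure on finite-measure sets, then $|\int g_n \,\rd\nu - \int g_n\,\rd\nu_0| \to 0$ whenever $\nu, \nu_0 \ll \mu$ are probabilities. Granting that lemma, the proof is complete: $\int g(F_n)\,\rd\nu \to \bE[g(\zeta)]$ for all $\nu \ll \mu$ and all bounded Lipschitz $g$, which is precisely $F_n \overset{\cL(\mu)}{\Longrightarrow} \zeta$.

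The main obstacle is the transfer lemma in the last paragraph — making rigorous the claim that asymptotic $T$-invariance of a uniformly bounded sequence forces $\mu$-a.e.-density-independence of the limiting integrals. This is where conservativity and ergodicity are genuinely used (without them the statement is false), and it requires either the Hopf ratio ergodic theorem or a direct argument with the dual operator and a careful truncation/Egorov step to handle the fact that $\mu$ is infinite. Everything else — reduction to Lipschitz functions, passing from (ii) to $g_n \circ T - g_n \to 0$ in measure, and the final assembly — is routine. If a self-contained treatment is wanted I would isolate the transfer lemma as a separate statement with its own short proof; otherwise the cleanest route is simply to verify that our hypotheses (i) and (ii) match those of \cite[Theorem 1]{Z07b} verbatim and invoke it.
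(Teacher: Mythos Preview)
The paper does not prove this proposition at all: it is stated with attribution to Zweim\"uller \cite[Theorem 1]{Z07b} and used as a black box in the proof of Lemma \ref{lem:fl-excursion}. Your sketch is a reasonable outline of how Zweim\"uller's theorem is actually proved (reduction to bounded Lipschitz test functions, asymptotic $T$-invariance of $g\circ F_n$ from (ii), then a transfer lemma using conservativity and ergodicity to show the limiting integrals are density-independent), and you correctly identify the transfer lemma as the nontrivial step. But since the paper's ``proof'' is simply the citation, the appropriate comparison is that your final sentence --- verify the hypotheses match \cite[Theorem 1]{Z07b} and invoke it --- is exactly what the paper does.
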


The stochastic process $(\eta^{(\alpha,\beta)}_j(t):t\geq0,\;j=1,\dots,d)$
is an $\bR^d$-valued $\alpha$-stable L\'evy process with  L\'evy measure 
\begin{align*}
	&\Pi_{(\alpha,\sum_{j=1}^d \beta_j \delta_{(\mathbbm{1}_{\{i=j\}})_{i=1}^d})}(A)
	\\
	&= \int_0^\infty \rd r \int_{\bS^{d-1}}
	 \sum_{j=1}^d\beta_j \delta_{(\mathbbm{1}_{\{i=j\}})_{i=1}^d}(\rd x)\mathbbm{1}_A(rx)\frac{\alpha r^{-\alpha-1}}{\Gamma(1-\alpha)},
	 \quad
	A\in\cB(\bR^d),
\end{align*}
where $\bS^{d-1}:=\{x\in\bR^d:|x|=1\}$ and $\delta_{x}$ denotes the Dirac measure at $x$.
We will modify the functional stable convergence for stationary sequence in Tyran-Kami\'nska \cite[Theorem 1.1]{Ty10b} and  obtain the following lemma.

\begin{Lem}[Functional convergence of partial sum of excursion lengths]\label{lem:fl-excursion}
Suppose that $T$ is a CEMPT on $(X,\cB,\mu)$ and that Assumptions \ref{ass:d-ray},  \ref{ass:reg-var} and \ref{ass:mixing} hold. 
Then, 
\begin{align}  
  \bigg(\frac{1}{n}\eta_j(b_nt):t\geq0,\;j=1,\dots,d\bigg)
  \underset{n\to\infty}{\overset{\cL(\mu)}{\Longrightarrow}}
  \big(\eta^{(\alpha,\beta)}_j(t):t\geq0, \; j=1,\dots,d\big),
  \notag\\
  \text{in $D([0,\infty),\bR^d)$}.
  \label{lem:fl-excursion-1}
\end{align}
\end{Lem}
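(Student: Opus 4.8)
The plan is to deduce the functional stable convergence of the partial-sum process $\big(\frac{1}{n}\eta_j(b_nt)\big)$ from a functional stable limit theorem for stationary sequences, applied to the strictly stationary sequence $(\ell\circ T_Y^k:k\geq0)$ on the probability space $(X,\cB,\mu_Y)$. By Assumption \ref{ass:mixing} this sequence is exponentially continued fraction mixing, and by the Lemma on regular variation of excursion lengths we know $\mu_Y[\varphi>n]\in\cR_{-\alpha}(\infty)$ together with the tail balance $\mu_Y[\ell_j\geq n]\sim\beta_j\mu_Y[\varphi>n]$; since $|\ell|=\ell_1+\dots+\ell_d=\varphi-(\text{number of }Y\text{-visits in one excursion})$ differs from $\varphi$ by at most a bounded amount on each excursion, in fact $|\ell|$ is regularly varying with the same index $\alpha$, and the radial part of $\ell$ concentrates along the $d$ coordinate directions $(\mathbbm{1}_{\{i=j\}})_{i=1}^d$ with weights $\beta_j$. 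These are exactly the hypotheses of a Tyran-Kamińska-type theorem (\cite[Theorem 1.1]{Ty10b}, see also \cite{Ty10a}): one checks the joint regular variation / multivariate domain-of-attraction condition on $\ell$, the mixing-type condition (continued fraction mixing implies the required $\mathcal{A}$ or strong-mixing hypothesis, with the exponential rate giving anti-clustering for free), and the absence of a Gaussian component since $\alpha<1$. The conclusion is that, with respect to $\mu_Y$,
\begin{align}
	\bigg(\frac{1}{n}\sum_{k=0}^{\lfloor b_n t\rfloor}\ell_j\circ T_Y^k:t\geq0,\;j=1,\dots,d\bigg)
	\overset{\mu_Y}{\Longrightarrow}
	\big(\eta_j^{(\alpha,\beta)}(t):t\geq0,\;j=1,\dots,d\big),
	\quad\text{in }D([0,\infty),\bR^d),
\end{align}
where the normalization $b_n$ is precisely the one chosen so that $b_n\sim 1/(\Gamma(1-\alpha)\mu_Y[\varphi\geq n])$ makes $n^{-1}\sum_{k=0}^{b_n t}$ converge to the stable subordinator with the Lévy measure $\Pi_{(\alpha,\sum\beta_j\delta_{(\mathbbm{1}_{\{i=j\}})})}$ described just before the lemma. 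Recognizing that this Lévy process has independent $\alpha$-stable coordinates with exponents $\lambda^\alpha\beta_j s$ identifies the limit with $(\eta_j^{(\alpha,\beta)})_{j=1}^d$ by the Itô-excursion description in Section \ref{sec:bessel}.

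Having the convergence with respect to the single absolutely continuous reference measure $\mu_Y$, the next step is to upgrade it to strong distributional convergence with respect to $\mu$ via Zweimüller's criterion, Proposition \ref{lem:st-dist}. Here I would apply the proposition to the CEMPT $T$ on $(X,\cB,\mu)$ with $\nu_0=\mu_Y$, $U=D([0,\infty),\bR^d)$ equipped with the metric $\rho$ of \eqref{def:metric}, and $F_n(x)=\big(\frac{1}{n}\eta_j(b_nt)(x):t\geq0,\;j=1,\dots,d\big)$. Condition (i) of Proposition \ref{lem:st-dist} is exactly the $\mu_Y$-convergence just established (note $\mu_Y\ll\mu$). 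For condition (ii) one must show $\mu[\rho(F_n\circ T,F_n)>\varepsilon]\to0$ for every $\varepsilon>0$. Since $\eta_j(t)(x)=\sum_{k=0}^{\lfloor t\rfloor}\ell_j\circ T_Y^k(x)$ counts the $A_j$-visits in the first $\lfloor t\rfloor+1$ excursions of $x$ away from $Y$, applying $T$ shifts the orbit by one step; the number of completed $Y$-excursions changes by at most one, and the partial counts $\eta_j(b_nt)\circ T$ and $\eta_j(b_nt)$ differ by at most a bounded number of whole excursion lengths near the time parameter, a discrepancy which after dividing by $n$ and allowing a small time-change $\lambda\in\Lambda$ is $o(1)$ in the metric $\rho$ uniformly on compacts — more precisely one argues as in \cite{TZ,Z07b,SY} that $\rho(F_n\circ T,F_n)\leq 1\wedge \frac{1}{n}(\text{one or two excursion lengths } \ell_i\circ T_Y^m \text{ near time } b_n t)$, which tends to $0$ in $\mu$-measure because $b_n\in\cR_\alpha(\infty)$ grows and a single excursion length is finite $\mu$-a.e.

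The main obstacle I anticipate is the careful verification that the hypotheses of the stationary functional stable limit theorem \cite[Theorem 1.1]{Ty10b} are genuinely met — in particular translating exponential continued fraction mixing into the precise mixing/anti-clustering hypothesis used there, and checking the multivariate regular variation of $\ell$ (joint tail behavior, not just marginals) so that the limiting Lévy measure has the stated form concentrated on the $d$ rays. A secondary, more technical point is the bookkeeping in condition (ii) of Proposition \ref{lem:st-dist}: one has to be slightly careful about the ``boundary'' excursion — the one straddling time $b_nt$ — and about the floor functions $\lfloor b_nt\rfloor$ versus $\lfloor b_n t+1\rfloor$, but this is a routine estimate of the type already carried out in \cite{SY, Z07b}. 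Once both steps are in place, \eqref{lem:fl-excursion-1} follows, and this lemma feeds directly into the representation formulae of Lemmas \ref{lem-dW}, \ref{lem:Wil} together with the continuous-mapping and inverse-map lemmas of Section \ref{sec:Skorokhod} to give Theorem \ref{main1}.
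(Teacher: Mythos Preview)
Your two-step plan --- first apply Tyran-Kami\'nska's functional stable limit theorem (Proposition \ref{prop:stable}) to the stationary sequence $(\ell\circ T_Y^k)_{k\geq0}$ under $\mu_Y$, then upgrade via Zweim\"uller's criterion (Proposition \ref{lem:st-dist}) --- is exactly the paper's approach, and your description of step one (regular variation of $\ell$ with spectral measure $\sum_j\beta_j\delta_{(\mathbbm{1}_{\{i=j\}})_i}$, exponential continued fraction mixing supplying both the mixing and anti-clustering hypotheses) is accurate.

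The one place where your sketch is too loose is the verification of condition (ii) of Proposition \ref{lem:st-dist}. Your heuristic that $\rho(F_n\circ T,F_n)$ is controlled by ``one or two excursion lengths'' divided by $n$, which goes to $0$ in $\mu$-measure because a single excursion length is finite $\mu$-a.e., does not quite work. The paper computes explicitly that on $T^{-1}Y^c$ one has $|\eta_j(t)\circ T-\eta_j(t)|\leq1$ (a single $A_j$-visit, not an excursion length), so $\rho(F_n\circ T,F_n)\leq1/n$ there with no time change needed. On $T^{-1}Y$, however, $\eta_j(t)\circ T=\eta_j(t+1)$: the excursion \emph{index} shifts by one, and to absorb this in the Skorokhod metric you need a $\lambda_n\in\Lambda$ with $\gamma(\lambda_n)\to0$. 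The paper takes $\lambda_n(t)=t+\min\{t/\sqrt{b_n},1\}$, which has $\gamma(\lambda_n)\leq\log(1+1/\sqrt{b_n})\to0$, at the cost of an error bounded by $\max_{k\leq\sqrt{b_n}}\frac{1}{n}\varphi\circ T_Y^k$. The conclusion then rests not on qualitative finiteness but on the quantitative estimate
\[
\mu\!\left[T^{-1}Y\cap\Big\{\max_{k\leq\sqrt{b_n}}\tfrac{1}{n}\varphi\circ T_Y^k>\varepsilon\Big\}\right]
\leq \mu(Y)\,\sqrt{b_n}\,\mu_Y[\varphi>n\varepsilon]\underset{n\to\infty}{\longrightarrow}0,
\]
which uses $b_n\in\cR_\alpha(\infty)$ with $\alpha<1$ in an essential way (so that $\sqrt{b_n}/b_{\lfloor n\varepsilon\rfloor}\to0$). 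Your remark that ``$b_n$ grows'' is pointing in the right direction, but the argument requires this union bound over $\sqrt{b_n}$ indices rather than control of a single excursion.
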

\begin{proof}
By the assumptions and Proposition \ref{prop:stable},
we obtain
\begin{align}
F_n:=\bigg(\frac{1}{n}\eta_j(b_nt):t\geq0,\;j=1,\dots,d\bigg)
  \underset{n\to\infty}{\overset{\mu_Y}{\Longrightarrow}}
  \big(\eta^{(\alpha,\beta)}_j(t):t\geq0,\;i=1,\dots,d\big),
 \notag
 \\
 \text{in $D([0,\infty),\bR^d)$}.
  \label{lem:fl-excursion-2}
\end{align}
Let $\rho$ be the metric on $D([0,\infty),\bR^d)$ defined by \eqref{def:metric}.
By Proposition \ref{lem:st-dist}, it is sufficient to prove that, for any $\varepsilon>0$,
\begin{align}\label{lem:fl-excursion-goal}
	\mu[\rho(F_n\circ T, F_n)>\varepsilon]\underset{n\to\infty}{\to}0.
\end{align}
Let us prove \eqref{lem:fl-excursion-goal}. It is easily seen that 
\begin{align}
  \eta_j(t)\circ T 
   =
   \begin{cases}
   	 \eta_j(t)-1, & \text{on $T^{-1}A_j$},
   	 \\
   	 \eta_j(t), & \text{on $T^{-1}A_i\; (i\neq j)$},
   	 \\
   	 \eta_j(t+1), &  \text{on $T^{-1}Y$},
   	\end{cases}
\end{align}
For $t\geq0$ and $n\in\bN$, let us define the map $\lambda_n(t):X\to[0,\infty)$ by
\begin{align}
	\lambda_n(t)=\lambda_n(t)(x):=
	\begin{cases}
		t, &x\in T^{-1}Y^c,
		\\
		t+\min\{t/\sqrt{b_n}, 1\}, 
		&x\in T^{-1}Y.
	\end{cases}
\end{align}
Then we have
\begin{align}\label{inequality:time-change}
 	\max_{j=1,\dots,d}\sup_{t\geq0}\bigg|\frac{1}{n}\eta_j(b_n t)\circ T - \frac{1}{n}\eta_j(b_n\lambda_n(t))\bigg|
 	\leq 
 	\begin{cases}
 	1/n, & \text{on $T^{-1}Y^c$},
 	\\
 	\displaystyle
 	\max_{k\leq \sqrt{b_n}}\bigg(\frac{1}{n} \varphi \circ T^k_Y\bigg),
 	&\text{on $T^{-1}Y$}.
 	\end{cases}
\end{align}
For any $\varepsilon>0$, let $n$ be large enough so that $1/n<\varepsilon$ and $\log(1+1/\sqrt{b_n})<\varepsilon$.
By the definition of $\rho$ and the inequality \eqref{inequality:time-change}, we have
\begin{align}
 \mu[\rho(F_n\circ T, F_n)>\varepsilon]
 &\leq
 \mu\bigg[ T^{-1}Y \cap \bigg\{\max_{k\leq \sqrt{b_n}}
 \bigg(\frac{1}{n} \varphi \circ T^k_Y\bigg)>\varepsilon \bigg\}\bigg]
 \notag\\
 &=
 \mu\bigg[ Y \cap \bigg\{\max_{k\leq \sqrt{b_n}}
 \bigg(\frac{1}{n} \varphi \circ T^{k-1}_Y\bigg)>\varepsilon \bigg\}\bigg]
 \notag\\
 &\leq
 \mu(Y)\sqrt{b_n}\mu_Y[\varphi>n\varepsilon].
\end{align}
Here we used the fact that $T$ is $\mu$-preserving and $T_Y$ is $\mu_Y$-preserving. Since $\mu_Y[\varphi>n\varepsilon]=(\Gamma(1-\alpha) b_{\lfloor n\varepsilon +1 \rfloor})^{-1}$ and $b_n\in\cR_\alpha(\infty)$, we have
\begin{align}
	\sqrt{b_n}\mu_Y[\varphi>n\varepsilon]\underset{n\to\infty}{\to}0.
\end{align}
Therefore we obtain the desired convergence \eqref{lem:fl-excursion-goal}.
\end{proof}

We now prove Theorem \ref{main1}.

\begin{proof}[Proof of Theorem \ref{main1}]
Lemma \ref{lem:discrete-represent-G-D} implies that
\begin{align*}
 \frac{1}{n}G_Y(nt)
 &=
 \sup\bigg\{\frac{1}{n} \varphi(b_ns):\frac{1}{n} \varphi(b_ns)\leq t\bigg\},
 \quad t\geq0,
 \\
  \frac{1}{n}D_Y(nt)
 &=
 \inf\bigg\{\frac{1}{n} \varphi(b_ns):\frac{1}{n} \varphi(b_ns)> t\bigg\},
 \quad t\geq0.	
\end{align*}
For $a,b>0$ and $w\in D_0$, we will denote by $(aw(b\cdot))^{-1}$ the right-continuous inverse of $(aw(bt):t\geq0)$.
Then we see at once that $(aw(b\cdot))^{-1}(t)=\frac{1}{b}w^{-1}(\frac{1}{a}t)$, $t\geq0$. Therefore Lemma \ref{lem-dW} implies
\begin{align*}
	\frac{1}{n}S^{-1}_{A_j}(nt)
	=
	\frac{\lfloor nt+1\rfloor}{n}+ \sum_{\substack{i=1,\dots,d,\\ i\neq j}}
  \frac{1}{n}\eta_i\bigg(b_n
 \Big(\frac{1}{n}\eta_j(b_n\cdot)\Big)^{-1}(t)\bigg)
              +\frac{b_n}{n}\Big(\frac{1}{n}\eta_j(b_n \cdot)\Big)^{-1}(t),
  \\
  t\geq0,\;j=1,\dots,d. 
\end{align*}
and
\begin{align*}
	\frac{1}{n}\varphi(b_n t)&=\frac{\lfloor b_n t+1\rfloor}{n}+\sum_{i=1}^d \frac{1}{n}\eta_i(b_n t),
  \quad t\geq0.	
\end{align*}
Note that $b_n/n\to0$, as $n\to\infty$. 
Combining Lemmas \ref{lem:dist-conv-G-D} and \ref{lem:inverse} with Lemmas \ref{lem:represent-G-D}, \ref{lem:Wil} and \ref{lem:fl-excursion}, we have
\begin{align}
  &\bigg(\bigg(\frac{1}{n}S^{-1}_{A_j}(nt)\bigg)_{j=1}^d,\;
          \frac{1}{n}\varphi(b_nt), \;
          \frac{1}{n}G_Y(nt),\; 
          \frac{1}{n} D_Y(nt):t\geq0\bigg) 
    \notag\\
    &
    \underset{n\to\infty}{\overset{\cL(\mu)}{\Longrightarrow}}
    \Big(  \Big(\big(Z_j^{(\alpha,\beta)}\big)^{-1}(t)\Big)_{j=1}^d,
     \eta^{(\alpha,\beta)}(t),\; G^{(\alpha,\beta)}(t),\;
         D^{(\alpha,\beta)}(t) 
           :t\geq0\Big),
    \quad\text{in $D([0,\infty),\bR^{d+1})$}.	
\end{align}
Recall that $\varphi(t)=S_Y^{-1}(t)$ and $\eta^{(\alpha,\beta)}(t)=(L^{(\alpha,\beta)})^{-1}(t)$.
Using Corollary \ref{cor:inverse}, we obtain the desired convergence \eqref{main1-1}.	
\end{proof}

\begin{Rem}\label{rem:main-modif-proof}
We now return to the setting of Remark \ref{rem:sym-diff}. Combining \eqref{main1-1} with Birkhoff's ergodic theorem and Hopf's ratio ergodic theorem, we can easily have 
\begin{align}
    &\bigg(\bigg(\frac{1}{n}S_{A_j'}(nt)\bigg)_{j=1}^d,\;
          \frac{1}{b_n}S_{Y'}(nt):t\geq0\bigg) 
    \notag\\
    &
    \underset{n\to\infty}{\overset{\cL(\mu)}{\Longrightarrow}}
    \Big(  \big(Z_j^{(\alpha,\beta)}(t)\big)_{j=1}^d,\;
     \frac{\mu(Y')}{\mu(Y)}L^{(\alpha,\beta)}(t)
      :t\geq0\Big),
    \quad\text{in $D([0,\infty),\bR^{d+1})$}.
    \label{rem:main-modif-proof-1}	
\end{align}
Then we use Corollary \ref{cor:inverse} and Lemma \ref{lem:dist-conv-G-D} to obtain the convergence \eqref{rem:sym-diff-1}.	
\end{Rem}

\section{Proof of Theorem \ref{main2}}\label{sec:proof2}

We now return to the setting introduced in Subsection \ref{subsec:int} and prove Theorem \ref{main2} by using Theorem \ref{main1}. By Remark \ref{rem:sym-diff}, we only need to consider one particular combination of sets $A_1,\dots,A_d,Y\in\cB([0,1])$ satisfying (\ref{eq:AY}).
In the following, we will choose $A_1,\dots,A_d,Y$ suitably so that all of the conditions of Assumptions \ref{ass:d-ray}, \ref{ass:reg-var} and \ref{ass:mixing} will  be satisfied. 
We need to break up the proof into the case of $d=2$ and the case of $d\geq3$ for a certain reason. See Remark \ref{rem:main2}.

\subsection{Case $d=2$}

  Let us consider the case of $d=2$. Following Thaler \cite[Section 4]{T02} and Zweim\"uller \cite[Section 2]{Z03}, we choose a point $\gamma\in J_1$ such that
\begin{align}
	T\gamma\in J_2
	\quad\text{and}\quad
	T^2\gamma=\gamma.
\end{align}
Hence $\gamma$ is a $2$-periodic point of $T$. See Figure \ref{figure:periodic}. Set
\begin{align}
  A_1:=[0,\gamma),
  \quad
  Y:=[\gamma,T\gamma]
  \quad\text{and}\quad
  A_2:=(T\gamma,1].	
\end{align}
Then we have
\begin{align*}
	&
	T(A_1) = A_2^c,
    \quad
	T(A_2) = A_1^c,
	\quad
	T(Y\cap J_1) = A_2,
	\quad
	T(Y\cap J_2) = A_1, 
	\quad \text{a.e.}
\end{align*}
Therefore Assumption \ref{ass:d-ray} holds for $d=2$. 
Let us define $\varphi$, $\ell=(\ell_1,\ell_2)$, $\mu_Y$ and $T_Y$ as in Section \ref{sec:notation}.
For $n\geq1$, we define subsets $P_{1,n}$ and $P_{2,n}\subset Y$ by 
\begin{align}
	P_{1,n}
	:=
	Y\cap \{\ell_1=n\}
	=
	Y\cap J_2\cap\{\varphi=n+1\},
	\\
	P_{2,n}
	:=
	Y \cap \{\ell_2=n\}
	=
	Y \cap J_1\cap \{\varphi=n+1\}.
\end{align}
Then we have 
\begin{align}
  P_{1,n}= (f_2\circ f^n_1)(Y) =\Big[(f_2\circ f^{n}_1)\gamma, \:(f_2\circ f_1^{n-1})\gamma\Big),
  \\
  P_{2,n} = (f_1\circ f^n_2)(Y) =\Big((f_1\circ f^{n}_2)\gamma, \:(f_1\circ f_2^{n+1})\gamma\Big].	
\end{align}
Here we used the fact that $T\gamma = f_1^{-1}\gamma = f_2 \gamma$.
It is easy to check that $Y=\sum_{j,n}P_{j,n}$, a.e.

\begin{figure}
\begin{minipage}{0.5\hsize}
\centering
\includegraphics[width=0.5\linewidth]{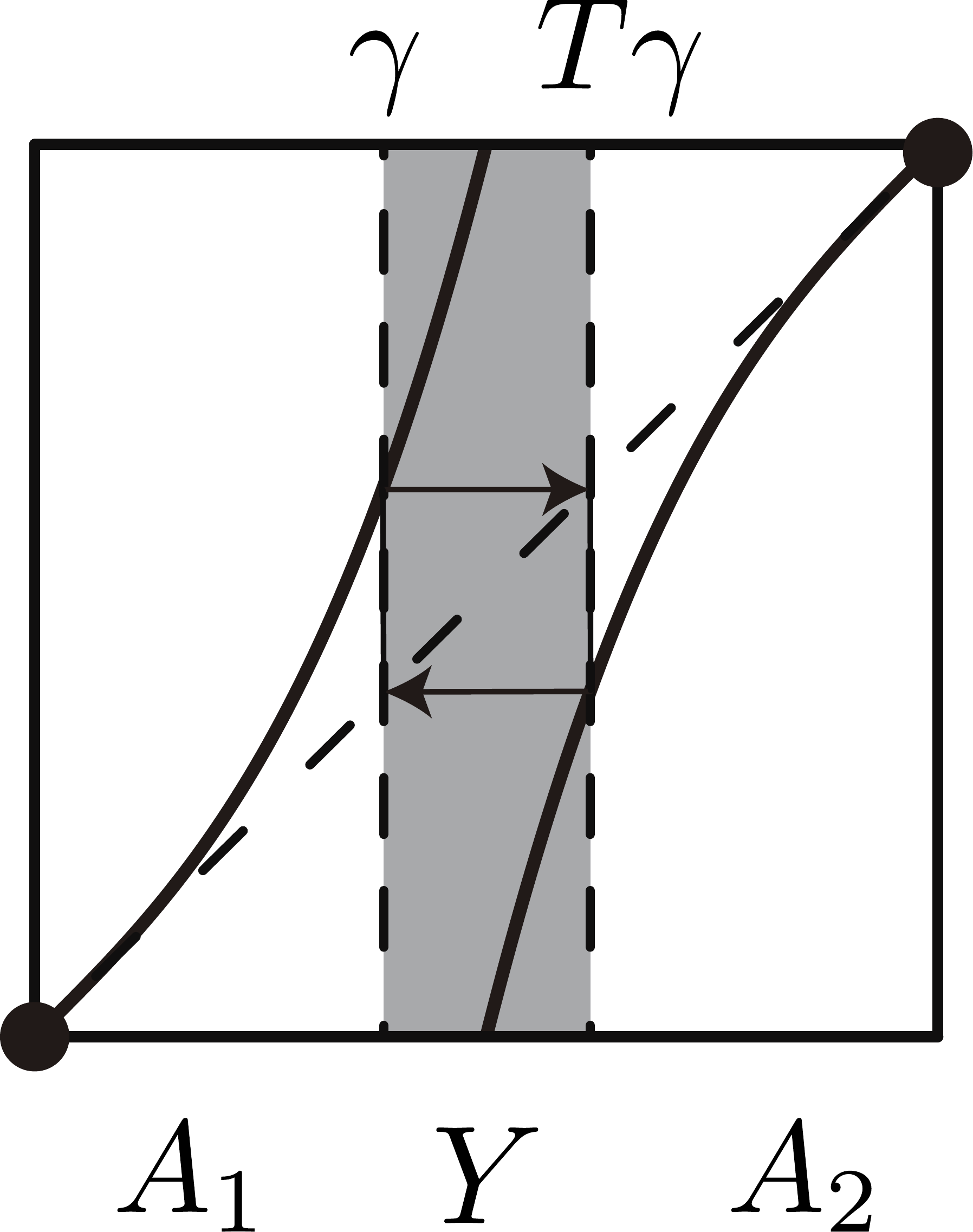}
\caption{2-periodic point $\gamma$}\label{figure:periodic}
\end{minipage}
\begin{minipage}{0.5\hsize}
\centering
\includegraphics[width=0.6\linewidth]{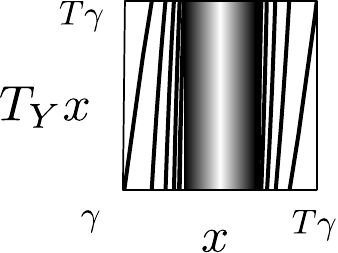}
\caption{First return map $T_Y:Y \to Y$, which has infinitely many branches $T_Y|_{P_{j,n}}: P_{j,n}\to Y$ ($j=1,2$; $n\geq0$)}
\end{minipage}
\end{figure}

\begin{Lem}[Thaler {\cite[Lemma 5]{T02}}]\label{lem:interval-2-reg}
 Suppose that Assumption \ref{ass:indiff} holds. Then,  
 \begin{align}
 	\mu_Y[\ell_j\geq n]
 	\underset{n\to\infty}{\sim}
 	\beta_j \frac{1}{\mu(Y)\Gamma(1-\alpha)b_n},
 	\quad
 	j=1,2.
 \end{align}
where $\beta_j$ and $(b_n)_{n\geq0}$ have been defined by {\rm (\ref{beta_j})} and {\rm (\ref{phi})}, respectively.	
\end{Lem}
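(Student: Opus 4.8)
The plan is to evaluate $\mu_Y[\ell_j\ge n]$ almost explicitly from the description of the sets $P_{j,n}$ just obtained, and then to read off the leading asymptotics from the regular variation in Assumption~\ref{ass:indiff}. I would carry out $j=1$ in detail; the case $j=2$ is the mirror image, based at $x_2=1$ with the roles of $f_1,f_2$ exchanged.

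Since $Y=\sum_{j,m}P_{j,m}$ up to a null set and the intervals $P_{1,m}=[(f_2\circ f_1^{m})\gamma,\,(f_2\circ f_1^{m-1})\gamma)$ are pairwise adjacent, for $n\ge1$ they telescope:
\begin{align*}
\{x\in Y:\ell_1(x)\ge n\}=\sum_{m\ge n}P_{1,m}=\bigl[\,f_2(0),\ (f_2\circ f_1^{n-1})\gamma\,\bigr),
\end{align*}
where $f_2(0)=\lim_{m\to\infty}(f_2\circ f_1^{m})\gamma$ is an interior point of $[0,1]$, at which $h$ is continuous and strictly positive. Hence
\begin{align*}
\mu(Y)\,\mu_Y[\ell_1\ge n]=\int_{f_2(0)}^{(f_2\circ f_1^{n-1})\gamma}h(x)\,\rd x .
\end{align*}
As $f_1^{n-1}\gamma\downarrow0$ and $f_2$ is $C^{2}$ near $0$, the upper endpoint tends to $f_2(0)$ and this integral is asymptotic to $h(f_2(0))\,f_2'(0)\cdot f_1^{n-1}\gamma$. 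By the Perron--Frobenius relation $h=\widehat{T}h$, read off as $x\to x_1$, the coefficient $h(f_2(0))f_2'(0)$ is precisely the structural constant $v_1$ built into the definitions \eqref{beta_j}--\eqref{phi} of $\beta_1$ and $b_n$; verifying this is a routine computation. So everything is reduced to the asymptotics of the inverse-branch iterate $f_1^{n-1}\gamma$.

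That reduction is the heart of the matter. Put $u_n:=f_1^{n}\gamma$. Assumption~\ref{ass:indiff} gives $T_1y=y+c_1\Psi(y)(1+o(1))$ near $0$, hence $f_1(y)=y-c_1\Psi(y)(1+o(1))$, so $u_n\downarrow0$ and $(u_n-u_{n+1})/\Psi(u_n)\to c_1$. Because $\Psi$ is increasing and regularly varying, $\Psi(u_n)/\Psi(u_{n+1})\to1$, so $\sum_{k=0}^{n-1}(u_k-u_{k+1})/\Psi(u_k)$ is squeezed between Riemann sums for $\int_{u_n}^{u_0}\rd t/\Psi(t)$, and both are $\sim c_1 n$. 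Since $1/\Psi\in\cR_{-(1+1/\alpha)}(0+)$ is non-integrable at $0$, Karamata's theorem yields $\int_{u_n}^{u_0}\rd t/\Psi(t)\sim\alpha\,u_n/\Psi(u_n)$, whence $\alpha u_n/\Psi(u_n)\sim c_1 n$. Inverting this against the implicit definition of $b_n$ — using that $r\mapsto\alpha r/\Psi(r)\in\cR_{-1/\alpha}(0+)$, the relation $\alpha a_n^{-1}/\Psi(a_n^{-1})\sim n$ for $a_n:=\Gamma(1-\alpha)\bigl(\sum_{i=1}^{d}c_i^{-\alpha}v_i\bigr)b_n$, and the uniform convergence theorem for regularly varying functions — gives
\begin{align*}
f_1^{n-1}\gamma\ \sim\ f_1^{n}\gamma\ \sim\ \frac{c_1^{-\alpha}}{\Gamma(1-\alpha)\bigl(\sum_{i=1}^{d}c_i^{-\alpha}v_i\bigr)\,b_n}\,.
\end{align*}
(Alternatively one may quote the Thaler-type asymptotics for iterates of an inverse branch at a regularly varying indifferent fixed point recalled in the appendix.) Substituting this into the reduction and rewriting the prefactor through \eqref{beta_j} gives $\mu_Y[\ell_1\ge n]\sim\beta_1/(\mu(Y)\Gamma(1-\alpha)b_n)$, and $j=2$ follows identically near $x_2=1$. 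Summing over $j$ recovers $\mu_Y[\varphi>n]\sim1/(\mu(Y)\Gamma(1-\alpha)b_n)$, which is exactly what is needed to verify Assumption~\ref{ass:reg-var} for these maps.

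The step I expect to be delicate is the sharp asymptotic $f_1^{n}\gamma\sim c_1^{-\alpha}/a_n$: turning the first-order recursion $u_{n+1}=u_n-c_1\Psi(u_n)(1+o(1))$ into an exact asymptotic means controlling the accumulation of the $o(1)$ errors — handled by the monotone/Karamata comparison above — and the inversion against the implicitly defined $b_n$ must be done carefully with regular variation. The combinatorial and measure-theoretic steps are routine once the $P_{1,n}$ are in hand.
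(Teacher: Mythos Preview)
The paper does not prove this lemma itself; it simply cites Thaler \cite[Lemma 5]{T02}. Your approach --- telescope the $P_{1,m}$ to the interval $[f_2(0),(f_2\circ f_1^{n-1})\gamma)$, reduce $\mu_Y[\ell_1\ge n]$ to $h(a_1)f_2'(0)\cdot f_1^{n-1}\gamma$ using continuity of $h$ at $a_1$, then extract the asymptotics of $u_n=f_1^{n}\gamma$ from the recursion $u_n-u_{n+1}\sim c_1\Psi(u_n)$ via Karamata --- is precisely Thaler's method, and the analytic steps (the Riemann-sum squeeze for $\int du/\Psi(u)$, the inversion against $b_n$ via regular variation) are carried out correctly.

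There is one step you should not dismiss as ``routine'': the identification of your coefficient with $v_1$. Your computation yields $h(f_2(0))f_2'(0)=h(a_1)/T_2'(a_1)$, whereas the paper's literal definition \eqref{beta_j} gives $v_1=(h\circ f_1)(x_2)f_1'(x_2)=h(a_1)/T_1'(a_1)$. Since $T$ is only piecewise $C^2$, the one-sided derivatives $T_1'(a_1)$ and $T_2'(a_1)$ need not agree, and the Perron--Frobenius relation $h=\widehat{T}h$ at $x\to 0$ gives $h(x)-h(f_1(x))f_1'(x)\to h(a_1)f_2'(0)$, which does not by itself identify this quantity with $h(a_1)f_1'(1)$. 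Running your mirror computation for $j=2$ produces $h(a_1)f_1'(1)$, i.e., the paper's $v_1$, suggesting the indices in the paper's formula for $v_j$ may be transposed (one expects $v_j=\sum_{i\neq j}(h\circ f_i)(x_j)f_i'(x_j)$, the ``flux into $x_j$ from the other branches''). Either way, you should write out explicitly which constant your argument produces and reconcile it with the stated $v_j$ by consulting Thaler's original formulation rather than asserting the match.
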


Combining Lemma \ref{lem:interval-2-reg} with Karamata's Tauberian theorem and the equalities \eqref{eq:difference-1} and \eqref{eq:difference-2}, we see that Assumption \ref{ass:reg-var} is satisfied.

\begin{Lem}[Zweim\"uller {\cite[Lemma 2]{Z03}}]\label{lem:interval-2-exp-mixing}
	The map $T_Y:Y \to Y$ satisfies the following conditions:
\begin{itemize}
\item[\rm (1)] For each $j=1,2$ and $n\geq1$, the restriction $T_Y|_{P_{j,n}}$ can be extended to a $C^2$-bijective map  from $\bar{P_{j,n}}$ to $Y$.
\item[\rm (2)] 
$\inf\{T_Y'x:x\in\sum_{j,n} P_{j,n}\}>1$.
\item[\rm (3)] 
$\sup\{|T_Y''x|/|T_Y'x|^2:x\in\sum_{j,n} P_{j,n}\}<\infty$.
\end{itemize}
\end{Lem}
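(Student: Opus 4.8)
The plan is to make the branch structure of the first return map $T_Y$ completely explicit and then to verify the three conditions by elementary chain-rule computations; only condition (3) requires genuine work.

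First I would record the branch structure already implicit in the intervals $P_{j,n}$ written down before the lemma. For $x\in P_{1,n}$ we have $\varphi(x)=n+1$, so $Tx=T_2x$ lands in $A_1\subset J_1$, the iterates $T^2x,\dots,T^nx$ stay in $A_1$, and the return $T^{n+1}x=T_1(T^nx)$ is obtained by $n$ further applications of $T_1$; hence $T_Y|_{P_{1,n}}=T_1^{\,n}\circ T_2$, with inverse branch $f_2\circ f_1^{\,n}:Y\to P_{1,n}$, and symmetrically $T_Y|_{P_{2,n}}=T_2^{\,n}\circ T_1$. Since $T_1,T_2$ are $C^2$ bijections of $\bar{J_1},\bar{J_2}$ onto $[0,1]$ and $f_1^{\,k}(\bar{Y})\subset\bar{J_1}$ for $1\le k\le n$, the composition $T_1^{\,n}\circ T_2$ extends to a $C^2$ bijection of $\bar{P_{1,n}}$ onto $Y=[\gamma,T\gamma]$; this is condition (1).

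For condition (2) I would differentiate: for $x\in P_{1,n}$, $(T_Y|_{P_{1,n}})'(x)=\big(\prod_{k=1}^{n}T_1'(T^kx)\big)\,T_2'(x)$, a product of factors all $\ge 1$. The point $x$ lies in $Y\cap J_2\subset[a_1,T\gamma]$, a compact subset of $\bar{J_2}\setminus\{1\}$, so $T_2'(x)\ge\rho_2:=\inf_{[a_1,T\gamma]}T_2'>1$; and $T^nx=f_1(T_Yx)\in f_1(Y)=[f_1\gamma,\gamma]$, a compact subset of $\bar{J_1}\setminus\{0\}$, so $T_1'(T^nx)\ge\rho_1:=\inf_{[f_1\gamma,\gamma]}T_1'>1$. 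Hence $(T_Y|_{P_{1,n}})'\ge\rho_1\rho_2>1$ uniformly in $n$; the $P_{2,n}$ case is symmetric (using the dual identities $f_2\gamma=T\gamma$ and $f_2(T\gamma)<1$). The constants are strictly larger than $1$ precisely because the indifferent fixed points $0,1$ are strictly separated from the compact sets involved, and taking the minimum over $j$ gives (2).

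The real work is condition (3): near an indifferent fixed point the single-step derivatives of the inverse branches $f_j$ tend to $1$, so there is no geometric contraction to exploit, and the uniform-in-$n$ control must come from summability instead. Writing $\psi_n=f_2\circ f_1^{\,n}$ for the inverse branch of $T_Y|_{P_{1,n}}$ and using $|(T_Y|_{P_{1,n}})''|/|(T_Y|_{P_{1,n}})'|^2=\big|(\log|\psi_n'|)'\big|\circ T_Y|_{P_{1,n}}$, condition (3) on the $P_{1,n}$ amounts to a bound on $\sup_{y\in Y}\big|(\log\psi_n')'(y)\big|$ uniform in $n$. Differentiating $\log\psi_n'=\log(f_2'\circ f_1^{\,n})+\sum_{l=0}^{n-1}\log(f_1'\circ f_1^{\,l})$ reduces this to the finite constants $\|f_2''/f_2'\|_\infty$ and $\|f_1''/f_1'\|_\infty$ (each $f_j\in C^2([0,1])$ with $f_j'=1/(T_j'\circ f_j)$ bounded away from $0$) together with $\sum_{l\ge0}(f_1^{\,l})'(y)$. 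I would control the last sum uniformly in $y\in Y$ by noting that the intervals $f_1^{\,l}(Y)=[f_1^{\,l}\gamma,f_1^{\,l-1}\gamma]$ ($l\ge1$) are nested and shrink to $0$, so $\sum_{l\ge0}|f_1^{\,l}(Y)|=T\gamma<\infty$ by telescoping; feeding this back through $\log\big((f_1^{\,l})'(y)/(f_1^{\,l})'(y')\big)\le\|f_1''/f_1'\|_\infty\sum_{i=0}^{l-1}|f_1^{\,i}(Y)|\le\|f_1''/f_1'\|_\infty\,T\gamma$ gives a uniform distortion bound for each $f_1^{\,l}$ on $Y$, hence $(f_1^{\,l})'(y)\le K|f_1^{\,l}(Y)|/|Y|$ by the mean value theorem, and the sum over $l$ is finite. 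The analogous argument with $f_2$, whose iterates $f_2^{\,l}(Y)$ are nested intervals shrinking to $1$, handles the $P_{2,n}$. I expect this distortion estimate to be the one delicate point; note that it uses only the $C^2$ structure and the geometry of the fixed points, not Assumption \ref{ass:indiff}, and it is the bounded-distortion computation of Zweim\"uller \cite[Lemma 2]{Z03}, i.e. the standard verification that $T_Y$ is a uniformly expanding interval map satisfying Adler's condition.
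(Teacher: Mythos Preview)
Your argument is correct and is precisely the standard bounded-distortion computation; the paper itself gives no proof here but simply cites Zweim\"uller \cite[Lemma~2]{Z03}, whose argument is the one you have reconstructed. In particular your handling of (3) --- passing to the inverse branches $\psi_n=f_2\circ f_1^{\,n}$, reducing Adler's condition to a uniform bound on $\sum_{l\ge0}(f_1^{\,l})'(y)$ over $y\in Y$, and obtaining that bound via the telescoping identity $\sum_{l\ge0}|f_1^{\,l}(Y)|=T\gamma$ together with the distortion estimate --- is exactly the mechanism behind the cited lemma, so there is nothing to add.
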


Combining Lemmas \ref{lem:interval-2-reg} and \ref{lem:interval-2-exp-mixing} with Proposition \ref{lem:Markov-exp-mixing}, we see that Assumption \ref{ass:mixing} is satisfied. 
Therefore we use Theorem \ref{main1} to obtain the desired result.

\subsection{Case $d\geq3$}

Let us consider the case of $d\geq3$. Set
\begin{align}
 A_j
 := 
 J_j\cap T(J_j),
 \quad j=1,\dots,d,
 \label{Aj-case3}
 \quad\text{and}\quad
 Y
 :=
 [0,1] \setminus \sum_{j=1}^d A_j.	
\end{align}
See Figure \ref{fig:case3}. Then Assumption \ref{ass:d-ray} holds. Let us define $\varphi$, $\ell=(\ell_1,\dots,\ell_d)$, $\mu_Y$ and $T_Y$ as in Section \ref{sec:notation}. The following lemma is a slight modification of Lemma \ref{lem:interval-2-reg}. 
\begin{Lem}
 Suppose that Assumption \ref{ass:indiff} holds. Then,  
 \begin{align}
 	\mu_Y[\ell_j\geq n]
 	\underset{n\to\infty}{\sim}
 	\beta_j \frac{1}{\mu(Y)\Gamma(1-\alpha)b_n},
 	\quad
 	j=1,\dots,d,
 \end{align}
where $\beta_j$ and $(b_n)_{n\geq0}$ have been defined by {\rm (\ref{beta_j})} and {\rm (\ref{phi})}, respectively.	
\end{Lem}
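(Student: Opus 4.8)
The plan is to follow the proof of Lemma~\ref{lem:interval-2-reg} (Thaler \cite[Lemma~5]{T02}), adapting the combinatorics to the partition \eqref{Aj-case3}; the multiray bookkeeping is close to that of \cite{SY}. The point is that Assumption~\ref{ass:indiff} controls, through regular variation, the slow escape of orbits from the indifferent fixed points, and $\mu_Y[\ell_j\geq n]$ is governed by the $\mu$-mass of a shrinking family of cylinder sets concentrated near $x_j$.

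First I would identify the level sets $Y\cap\{\ell_j=n\}$ geometrically. For the choice \eqref{Aj-case3}, Assumption~\ref{ass:d-ray} holds, and $x\in Y\cap\{\ell_j=n\}$ precisely when $Tx,\dots,T^nx\in A_j$ and $T^{n+1}x\in Y$; since near $x_j$ the map pushes orbits slowly away from $x_j$, this pins $Tx$ into a neighbourhood of $x_j$ inside $A_j$ of ``depth~$n$'', and $x$ is then recovered by applying one of the finitely many inverse branches of $T$ whose image meets $Y$ and which carry the relevant boundary point to a point near $x_j$. Writing $g_j$ for the inverse branch of $T_j$ fixing $x_j$ and $f_i$ for those further inverse branches, this gives $Y\cap\{\ell_j=n\}$ as a finite disjoint union of sets $(f_i\circ g_j^{n})(I)$, over the contributing indices $i\neq j$ and, when $x_j$ is interior, over the two monotone pieces of $T_j$ meeting at $x_j$ --- the exact analogue of $P_{1,n}=(f_2\circ f_1^n)(Y)$, $P_{2,n}=(f_1\circ f_2^n)(Y)$ from the case $d=2$. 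This is where the sum $\sum_{i\neq j}(h\circ f_j)(x_i)f_j'(x_i)$ and the extra factor $2$ for interior $j$ in the definition \eqref{beta_j} of $v_j$ enter.

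Next I would estimate the mass. Since $h$ is continuous and strictly positive off $\{x_i\}_{i=1}^d$, a Thaler-type bounded-distortion estimate for the long compositions $g_j^{n}$ near the indifferent fixed point (\cite{T80,T83}) yields $\mu(Y\cap\{\ell_j\geq n\})\sim v_j\,\lambda_{j,n}$ as $n\to\infty$, where $\lambda_{j,n}$ is the suitably normalized length of the $n$-th inverse-branch image $g_j^{n}(I)$; hence $\mu_Y[\ell_j\geq n]\sim v_j\,\lambda_{j,n}/\mu(Y)$. Finally, Assumption~\ref{ass:indiff}, through the standard regular-variation analysis of the escape recursion $u_{k+1}=u_k+c_j\Psi(u_k)(1+o(1))$ with $\Psi\in\cR_{1+1/\alpha}(0+)$, gives $\lambda_{j,n}\in\cR_{-\alpha}(\infty)$ with precise asymptotics which, matched against the definition \eqref{phi} of $b_n$ (built from the inverse of $s\mapsto\alpha s^{-1}/\Psi(s^{-1})$), produce $\mu_Y[\ell_j\geq n]\sim\beta_j/(\mu(Y)\Gamma(1-\alpha)b_n)$ with $\beta_j=c_j^{-\alpha}v_j/\sum_i c_i^{-\alpha}v_i$: the factor $c_j^{-\alpha}$ comes from the constant in \eqref{reg:T}, and the normalization by $\sum_i c_i^{-\alpha}v_i$ is the one built into $b_n$.

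I expect the main obstacle to be the bounded-distortion control for the iterates $g_j^{n}$ at the indifferent fixed point: since $g_j'(x_j)=1$ there is no uniform contraction, so one cannot invoke the usual distortion bounds for uniformly expanding maps directly, but must instead use Thaler's precise polynomial-rate estimates for iterates near a regular source \cite{T80,T83}, while handling the two one-sided branches simultaneously for each interior fixed point. Once this is in hand, the remainder --- summing the level-set masses, and combining Karamata's Tauberian theorem with \eqref{eq:difference-1} and \eqref{eq:difference-2} to conclude that Assumption~\ref{ass:reg-var} holds with the stated $\alpha$ and $\beta$ --- is identical to the case $d=2$.
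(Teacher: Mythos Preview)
Your proposal is correct and aligns with the paper's own treatment: the paper does not give a proof but simply declares the lemma ``a slight modification of Lemma~\ref{lem:interval-2-reg}'' (Thaler \cite[Lemma~5]{T02}), and your outline --- identifying $Y\cap\{\ell_j\geq n\}$ via inverse branches $f_i\circ g_j^n$, invoking Thaler's distortion estimates near the indifferent fixed points, and reading off the $c_j^{-\alpha}$ and $v_j$ factors (including the factor $2$ for interior $x_j$) from the regular-variation asymptotics --- is exactly what that modification amounts to.
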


\begin{figure}
\begin{minipage}{0.5\hsize}
\centering
\includegraphics[width=0.6\linewidth]{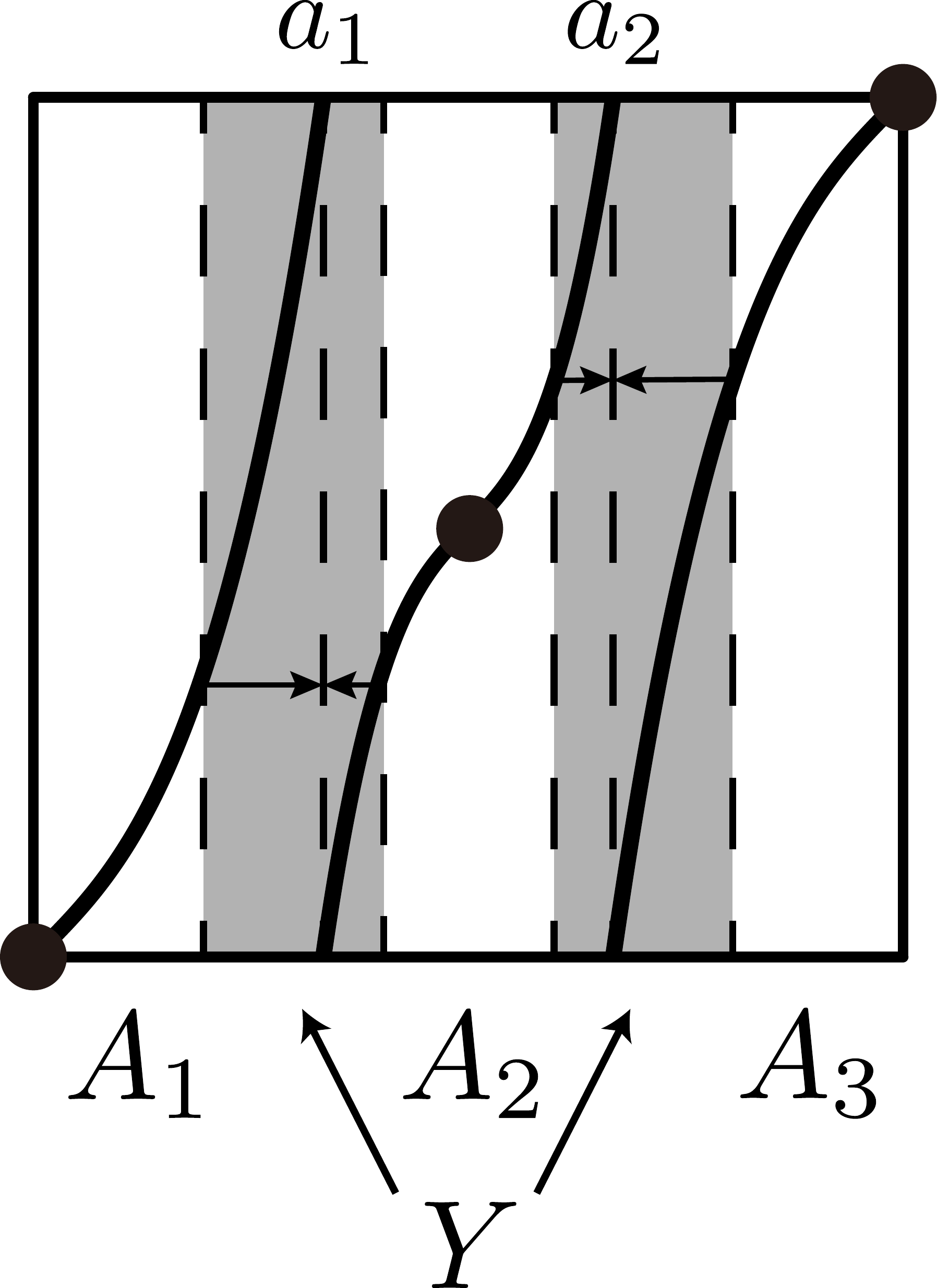}
\caption{Case $d=3$}
\label{fig:case3}	
\end{minipage}
\begin{minipage}{0.5\hsize}
\centering
\includegraphics[width=0.6\linewidth]{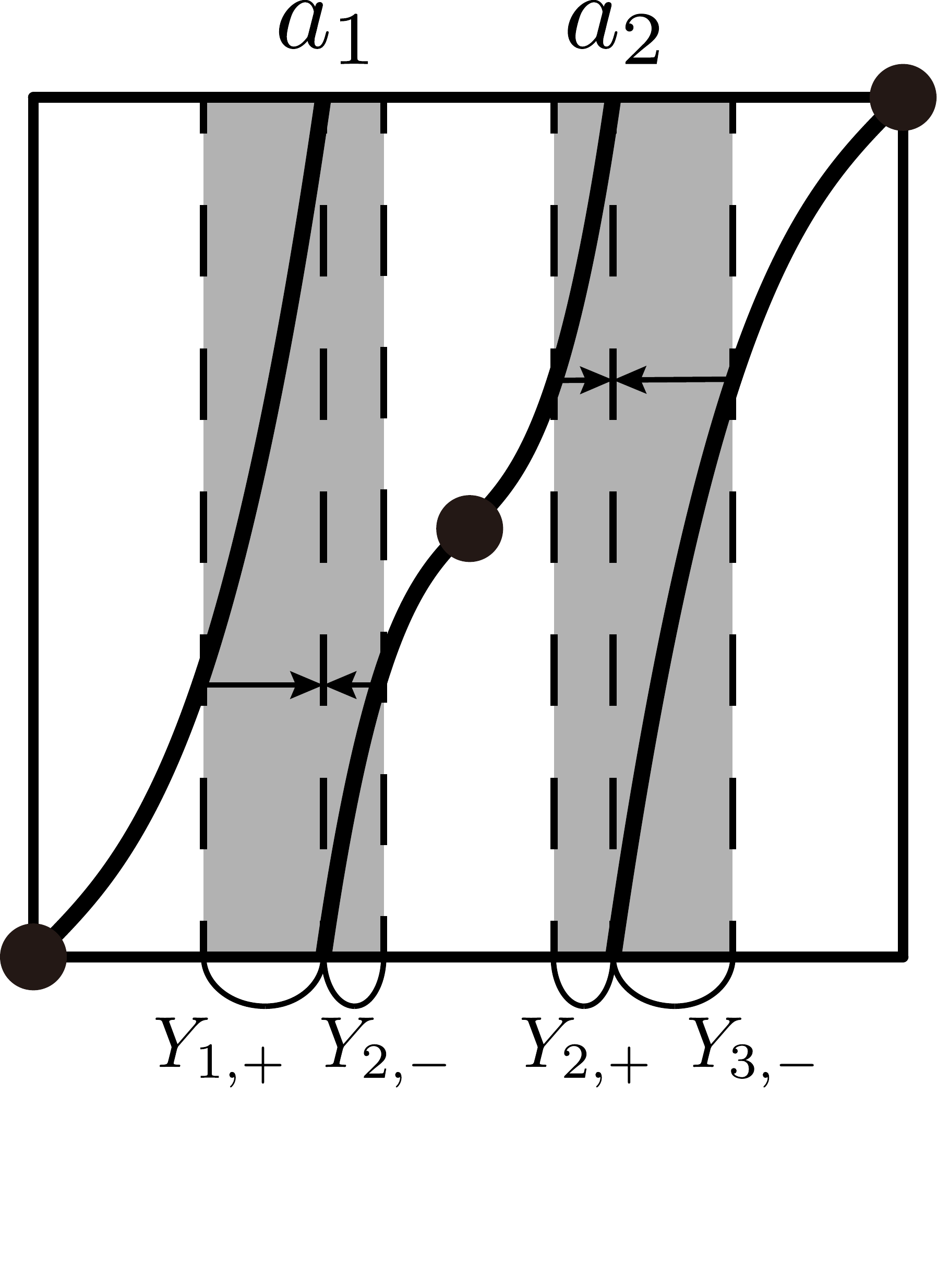}
\caption{$Y_{j,\pm}$}
\label{fig:case3-1}		
\end{minipage}
\end{figure}

Set
\begin{align}
	Y_{j,-}:= Y\cap (a_{j-1},x_j) 
	\quad
	\text{and}
	\quad
	Y_{j,+}:= Y\cap  (x_j,a_j),
	\quad
	j=1,\dots,d.
	\label{J-case3}
\end{align}
See Figure \ref{fig:case3-1}.
For $i,j \in\{1,\dots,d\}$, $\sigma\in\{+,-\}$ and $n\geq0$, we define a subset $P_{i,j,\sigma,n}\subset Y$ by
 \begin{align}
 P_{i,j,\sigma,n}
 :=&\:
 Y\cap J_i\cap T_Y^{-1}(Y_{j,\sigma}) \cap\{\ell_j=n\}
 \notag
 \\
 =&\:
 Y\cap J_i\cap T_Y^{-1}(Y_{j,\sigma}) \cap\{\varphi=n+1\}.
 \label{P-case3}
 %I_i\cap \bigg(\bigcap_{k=1}^{n} T^{-k}X_j\bigg)  \cap T^{-(n+1)}Y.
 \end{align}
We see at once that
\begin{align}\label{partition}
	Y_{i_0,\sigma_0}
  =
  \begin{cases}
  \sum_{j<i_0,\;\sigma=\pm,\;n\geq0} P_{i_0,j,\sigma,n},
  &
  \text{if $\sigma_0=-$},
  \\
  \sum_{j>i_0,\;\sigma=\pm,\;n\geq0} P_{i_0,j,\sigma,n}
  &
  \text{if $\sigma_0=+$},	
  \end{cases}
  \quad\text{a.e.},
\end{align}
and hence $Y=\sum_{i,j,\sigma,n}P_{i,j,\sigma,n}$, a.e.
Set 
\begin{align*}
  \Theta :=\{(i,j,\sigma,n):P_{i,j,\sigma,n}\neq \emptyset \}
  	     =\{(i,j,\sigma,n):i\neq j \;\;\text{and}\;\; (j,\sigma)\neq(1,-) , (d,+)\}.
\end{align*}

\begin{Lem}
	The map $T_Y:Y\to Y$ satisfies the following conditions:
\begin{itemize}
\item[\rm (1)] For each $(i,j,\sigma,n)\in \Theta$, the restriction $T_Y|_{P_{i,j,\sigma,n}}$ can be extended to a $C^2$-bijective map from $\bar{P_{i,j,\sigma,n}}$ to $\bar{Y_{j,\sigma}}$.
\item[\rm (2)] 
$\inf\{T_Y'x:x\in\sum_{i,j,\sigma,n} P_{i,j,\sigma,n}\}>1$.
\item[\rm (3)] 
$\sup\{|T_Y''x|/|T_Y'x|^2:x\in\sum_{i,j,\sigma,n} P_{i,j,\sigma,n}\}<\infty$.
\item[\rm (4)] For each $(i,j,\sigma,n)\in \Theta$, it holds that $T_Y^4(P_{i,j,\sigma,n})=Y$, a.e.
\end{itemize}
\end{Lem}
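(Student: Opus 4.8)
The plan is to deduce all four assertions from elementary properties of the finite family of $C^{2}$ branch maps $T_{1},\dots,T_{d}$, in the spirit of Lemma~\ref{lem:interval-2-exp-mixing} (Zweim\"uller~\cite{Z03}) for (1)--(3), with one extra combinatorial step for (4). The key preliminary observation is a \emph{factorisation} of $T_{Y}$ on each cell. Fix $(i,j,\sigma,n)\in\Theta$. On $P_{i,j,\sigma,n}$ the return time $\varphi$ equals $n+1$, so $T_{Y}=T^{\,n+1}$ there; by \eqref{Aj-case3} we have $A_{j}\subseteq J_{j}$, and then Assumption~\ref{ass:d-ray} together with the defining relation \eqref{P-case3} forces every $x\in P_{i,j,\sigma,n}$ to satisfy $x\in J_{i}$, then $Tx,\dots,T^{\,n}x\in A_{j}\subseteq J_{j}$ all lying on the one side of $x_{j}$ that faces $Y_{j,\sigma}$ (since $T_{j}$ fixes $x_{j}$ and maps each half of $\overline{J_{j}}$ into itself), and finally $T^{\,n+1}x\in Y_{j,\sigma}$. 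Hence $T_{Y}|_{P_{i,j,\sigma,n}}=T_{j}^{\,n}\circ T_{i}$, a composition of restrictions of two of the $C^{2}$ maps; everything below is read off from this.

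For (1), $T_{j}^{\,n}\circ T_{i}$ is $C^{2}$ on $\overline{P_{i,j,\sigma,n}}$ and strictly monotone (each factor has derivative exceeding $1$ in modulus), hence injective, and its image is exactly $\overline{Y_{j,\sigma}}$ because \eqref{P-case3} defines $P_{i,j,\sigma,n}$ as precisely the cylinder on which $T_{Y}$ acts by this branch and maps into $Y_{j,\sigma}$; this is the same bookkeeping as in Lemma~\ref{lem:interval-2-exp-mixing}(1). For (2) the point is the constraint $i\neq j$ built into $\Theta$: since $x_{i}\notin\overline{J_{j}}$, the set $T_{i}^{-1}(\overline{A_{j}})$ is a fixed compact subset of $\overline{J_{i}}\setminus\{x_{i}\}$ that contains $\overline{P_{i,j,\sigma,n}}$ for every $\sigma,n$, and on it $T_{i}'\geq c>1$; combined with $T_{j}'\geq 1$ along the remaining $n$ steps this gives $T_{Y}'\geq c>1$ uniformly over $\Theta$. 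For (3) I would apply the chain rule for distortion, which expresses $T_{Y}''/(T_{Y}')^{2}$ as a sum of the terms $T''(T^{k}x)/(T'(T^{k}x))^{2}$ weighted by the reciprocals of the forward products $\prod_{l>k}T'(T^{l}x)$; Assumption~\ref{ass:indiff} yields $\sup_{[0,1]}|T''|/(T')^{2}<\infty$ (the ratio tends to $0$ at each $x_{j}$ because $\Psi\in\cR_{1+1/\alpha}(0+)$ forces $T''\to 0$ there), while those forward products, which quantify the expansion accumulated in escaping a neighbourhood of $x_{j}$, grow like a power of the number of remaining steps, so the weighted sum is bounded uniformly in $n$. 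Since the factorisation reduces (3) to the one-branch statement already in the literature, I would simply invoke Thaler~\cite{T80,T83} / Zweim\"uller~\cite[Lemma~2]{Z03}, the only change from the $d=2$ case being the label of the branch.

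For (4), (1) gives $T_{Y}(P_{i,j,\sigma,n})=Y_{j,\sigma}$, and a second application of \eqref{partition} and (1) gives, a.e.,
\[
  T_{Y}(Y_{j,-})=\bigcup_{j'<j}(Y_{j',-}\cup Y_{j',+}),
  \qquad
  T_{Y}(Y_{j,+})=\bigcup_{j'>j}(Y_{j',-}\cup Y_{j',+}),
\]
with $Y_{1,-}=Y_{d,+}=\emptyset$. From this, for $d\geq 3$: from any non-empty $Y_{j,\sigma}$ one step reaches $Y_{1,+}$ (if $\sigma=-$) or $Y_{d,-}$ (if $\sigma=+$); then $T_{Y}(Y_{1,+})=Y\setminus Y_{1,+}$ and $T_{Y}(Y_{d,-})=Y\setminus Y_{d,-}$; and $Y\setminus Y_{1,+}$ contains both $Y_{2,+}$ and $Y_{d,-}$, with $T_{Y}(Y_{2,+})\supseteq Y_{d,-}$ and $T_{Y}(Y_{d,-})=Y\setminus Y_{d,-}$, so a third application covers all of $Y$ (and symmetrically starting from $Y\setminus Y_{d,-}$). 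Hence $T_{Y}^{\,3}(Y_{j,\sigma})=Y$ for every $(j,\sigma)\neq(1,-),(d,+)$, and therefore $T_{Y}^{\,4}(P_{i,j,\sigma,n})=Y$ a.e.; this is exactly where $d\geq 3$ is used --- when $d=2$ the corresponding reachability graph degenerates to a single $2$-cycle --- which is the reason for the case split. The only genuinely delicate ingredient I expect is the uniform bounded-distortion estimate in (3); but it is entirely standard for first-return maps of interval maps with indifferent fixed points and, once the factorisation is in place, is a black-box application of the one-branch result, so in the end (1), (2), (3), (4) amount to bookkeeping plus one short combinatorial observation.
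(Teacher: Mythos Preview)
Your proposal is correct and follows essentially the same route as the paper. For (1)--(3) the paper simply says ``the proofs are almost the same as those of Lemma~\ref{lem:interval-2-exp-mixing}'' and omits them; your factorisation $T_Y|_{P_{i,j,\sigma,n}}=T_j^{\,n}\circ T_i$ together with the observation that $i\neq j$ keeps the first step bounded away from the indifferent point is exactly the extra bookkeeping needed to reduce to that lemma. For (4) the paper's argument is the same chain $P_{i,j,-,n}\to Y_{j,-}\to Y_{1,+}\to Y\setminus Y_{1,+}\to Y$ (and symmetrically for $\sigma=+$); the only cosmetic difference is that the paper asserts $T_Y(Y\setminus Y_{1,+})=Y$ in one breath, whereas you unpack it via $Y_{2,+}\mapsto Y_{d,-}$ and $Y_{d,-}\mapsto Y\setminus Y_{d,-}$, which is precisely where $d\geq 3$ enters.
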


\begin{proof}
The proofs of (1), (2) and (3) are almost the same as those of Lemma \ref{lem:interval-2-exp-mixing}. So we omit them. 
Let $(i,j,-,n)\in \Theta$. Then, we have $T_Y(P_{i,j,-,n})= Y_{j,-}$, a.e.
Using (\ref{partition}), we have $T_Y(Y_{j,-})\supset Y_{1,+}$, a.e.
We see at once that
\begin{align*}
  T_Y(J_{1,+})= Y\setminus Y_{1,+}
  \quad
  \text{and}
  \quad
  T_Y(Y\setminus Y_{1,+}) = Y,
  \quad	\text{a.e.}
\end{align*}
Therefore we obtain $T_Y^4(P_{i,j,-,n})=Y$, a.e. Similarly, we can obtain $T^4_Y(P_{i',j',+,n'})=Y$, a.e., for $(i',j',+,n')\in\Theta$.	
\end{proof}

Therefore we obtain the desired result in the case of $d\geq3$,  as in the case of $d=2$.

\begin{Rem}\label{rem:main2}
Let us consider the case of $d=2$ and define $A_j$, $Y$, $Y_{i,\sigma}$ and $P_{i,j,\sigma,n}$ by (\ref{Aj-case3}), (\ref{J-case3}) and (\ref{P-case3}), respectively.
Then we have 
\begin{align*}
   T_Y(Y_{1,+})=Y_{2,-} 
   \quad
   \text{and}
   \quad
   T_Y(Y_{2,-})=Y_{1,+},
   \quad
   \text{a.e.}
\end{align*}
Hence 
$T_Y^{m}(P_{i,j,\sigma,n}) \neq Y$, a.e., for any $m\geq1$, that is, $T_Y$ is not aperiodic. Therefore the choice \eqref{Aj-case3} is not suitable in the case of $d=2$.	
\end{Rem}

%%%%%%%%%%%%%%%%%%%%%%%%%%%%%%%%%%%%%%%%%%%%%%%%%%%%%%%%%%%%%%%%%%%%%%
%%%%%%%%%%%%%%%%%%%%%%%%%%%%%%%%%%%%%%%%%%%%%%%%%%%%%%%%%%%%%%%%%%%%%%
%%%%%%%%%%%%%%%%%%%%%%%%%%%%%%%%%%%%%%%%%%%%%%%%%%%%%%%%%%%%%%%%%%%%%%
%%%%%%%%%%%%%%%%%%%%%%%%%%%%%%%%%%%%%%%%%%%%%%%%%%%%%%%%%%%%%%%%%%%%%%
%%%%%%%%%%%%%%%%%%%%%%%%%%%%%%%%%%%%%%%%%%%%%%%%%%%%%%%%%%%%%%%%%%%%%%
\appendix
%%%%%%%%%%%%%%%%%%%%%%%%%%%%%%%%%%%%%%%%%%%%%%%%%%%%%%%%%%%%%%%%%%%%%%
%%%%%%%%%%%%%%%%%%%%%%%%%%%%%%%%%%%%%%%%%%%%%%%%%%%%%%%%%%%%%%%%%%%%%%

\section{Mixing property of uniformly expanding Markov  interval map}\label{sec:markov-map}
We will recall mixing properties of uniformly expanding Markov interval maps. For  basic discussions of Markov interval maps, see for instance Adler \cite{Adler}, Bowen \cite{Bo}, Bowen--Series \cite{BS} and Pollicott--Yuri \cite[Sections 4 and 12]{PY}.

Let $(P_i)_{i\geq 1}$ be a countable family of disjoint open subintervals of $(0,1)$, and let $Y=\sum_{i\geq1}P_i$, a.e.
Suppose that a map $F:Y\to Y$ satisfies the following conditions: 	
\begin{itemize}
\item[(1)] ($C^2$-extension) for each $i\geq1$, the restriction $F|_{P_i}$ can be extended to a $C^2$-function on $\bar{P_i}$.
\item[(2)] (Markov map) If $F(P_i)\cap P_j\neq \emptyset$ for some $i,j\geq1$, then $F(P_i)\supset P_j$.

\item[(3)] (aperiodicity) There exists $n_0\geq1$ such that, for any $i\geq1$, it holds that $F^{n_0} (P_i)=Y$, a.e.  

\item[(4)] (finite image)  $\{F(P_i):i\geq1\}$ is a finite collection.

\item[(5)] (uniformly expanding) 
$\inf\{|F'(x)|:x\in\sum_i P_i\}>1$.

\item[(6)] (R\'enyi and Adler's condition)
$\sup\{|F''(x)|/|F'(x)|^2:x\in\sum_i P_i\}<\infty$.

\end{itemize}

The following two propositions are slight modifications of \cite[Theorem (I.2)]{BS} and \cite[Theorem 12.5]{PY}, respectively. So we omit the proofs of them.

\begin{Prop}\label{lem:invariant}
Assume that the conditions {\rm (1)--(6)} hold. Then
the map $F$ has a unique invariant probability measure $\nu_0$ equivalent to the Lebesgue measure on $Y$.  
\end{Prop}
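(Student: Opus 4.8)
The plan is to run the classical transfer-operator (Perron--Frobenius) argument for uniformly expanding Markov maps: conditions (5) and (6) yield \emph{bounded distortion}, condition (4) keeps the relevant geometry controlled by a finite collection of intervals, and condition (3) provides positivity of the invariant density and ergodicity, from which uniqueness follows.

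First I would introduce the transfer operator $\mathcal{L}$ of $F$ with respect to Lebesgue measure, $(\mathcal{L}g)(x):=\sum_{Fy=x}g(y)/|F'(y)|$, characterized by $\int_Y(\mathcal{L}g)\,\psi\,\rd x=\int_Y g\,(\psi\circ F)\,\rd x$; an $F$-invariant probability $\nu_0(\rd x)=h(x)\,\rd x$ equivalent to Lebesgue is precisely a fixed point $h\ge0$ of $\mathcal{L}$ with $h>0$ a.e.\ and $\int_Y h\,\rd x=1$. Iterating, $(\mathcal{L}^ng)(x)=\sum_Z\bigl(g/|(F^n)'|\bigr)(y_Z(x))\,\mathbbm{1}_{\{x\in F^n(Z)\}}$, where $Z$ ranges over the level-$n$ cylinders $P_{i_0}\cap F^{-1}P_{i_1}\cap\cdots\cap F^{-(n-1)}P_{i_{n-1}}$, on each of which $F^n$ is a $C^2$ diffeomorphism onto the interval $F^n(Z)=F(P_{i_{n-1}})$, and $y_Z(x)\in Z$ is the preimage of $x$.

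The engine is bounded distortion. Since each $F|_{P_i}$ is a strictly monotone $C^2$ map, the R\'enyi--Adler bound (6) gives $\bigl|\log\bigl(F'(\xi)/F'(\eta)\bigr)\bigr|\le D\,|F(\xi)-F(\eta)|$ for $\xi,\eta\in P_i$; chaining this along a level-$n$ cylinder and using (5) to contract, $|F^{k+1}x-F^{k+1}y|\le\Lambda^{-(n-1-k)}|F^nx-F^ny|$ with $\Lambda:=\inf|F'|>1$, we obtain for $x,y\in Z$
\[
\Bigl|\log\frac{(F^n)'(x)}{(F^n)'(y)}\Bigr|
\le D\sum_{k=0}^{n-1}|F^{k+1}x-F^{k+1}y|
\le D\,|F^nx-F^ny|\sum_{j\ge0}\Lambda^{-j}
\le\frac{D\Lambda}{\Lambda-1}=:\log C_0 ,
\]
the last step because $F^n(Z)$ is one of the \emph{finitely many} intervals $\{F(P_i)\}$ by (4), hence has length $\le1$. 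Thus $|(F^n)'|$ varies by at most the factor $C_0$ over any level-$n$ cylinder, and $\mathrm{Leb}(Z)$ is comparable, within the factor $C_0$, to $\mathrm{Leb}(F^nZ)/|(F^n)'(y)|$ for any $y\in Z$.

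Existence and uniqueness then follow along standard lines. Applying the distortion estimate to $g\equiv1$ and grouping level-$n$ cylinders by their (finitely many) images yields a uniform upper bound $\|\mathcal{L}^n1\|_\infty\le\bar C$; the covering property (3) guarantees that, for $n\ge n_0$, some level-$n$ cylinder maps onto all of $Y$, and one more distortion estimate turns this into a uniform lower bound $\mathcal{L}^n1\ge\underline c>0$ on $Y$. Since the composition operator $\psi\mapsto\psi\circ F$ is bounded on $L^1$ (with norm $\le\bar C$), its adjoint $\mathcal{L}$ is weak-$*$ continuous on $L^\infty$; a weak-$*$ limit point $h$ of the Ces\`aro averages $\tfrac1N\sum_{n<N}\mathcal{L}^n1$ then satisfies $\mathcal{L}h=h$ and $\underline c\le h\le\bar C$, and normalizing gives $\nu_0$, manifestly equivalent to Lebesgue. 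For uniqueness, bounded distortion together with the Lebesgue density theorem and the aperiodicity (3) yields the usual \emph{spread-the-density} argument showing that $F$ has no non-trivial invariant set mod $\mathrm{Leb}$, i.e.\ $\nu_0$ is ergodic; any other $F$-invariant probability is absolutely continuous with respect to $\nu_0$ (as $\nu_0\sim\mathrm{Leb}$), hence equals $\nu_0$. I expect the main obstacle to be the countable-branch bookkeeping: with infinitely many branches neither the uniform lower bound on $\mathcal{L}^n1$ nor the ergodicity argument is automatic, and making them work is exactly where the finite-image hypothesis (4) and the uniform aperiodicity (3) enter in an essential way.
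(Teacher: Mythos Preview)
The paper does not actually give its own proof of this proposition: it states that Propositions~\ref{lem:invariant} and \ref{lem:exact} are slight modifications of \cite[Theorem~(I.2)]{BS} and \cite[Theorem~12.5]{PY}, respectively, and explicitly omits the arguments. Your proposal is exactly the classical bounded-distortion transfer-operator proof that one finds in those references, and it is correct: the R\'enyi--Adler condition~(6) plus uniform expansion~(5) yield uniform distortion on cylinders; the finite-image hypothesis~(4) converts this into a two-sided $L^\infty$ bound $\underline c\le\mathcal{L}^n1\le\bar C$ (for $n\ge n_0$, using~(3) for the lower bound); a weak-$*$ limit of the Ces\`aro averages produces a fixed density bounded away from $0$ and $\infty$; and ergodicity via the usual distortion/Lebesgue-density argument gives uniqueness. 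One small expository point: the clause ``its adjoint $\mathcal{L}$ is weak-$*$ continuous on $L^\infty$'' is correct but perhaps worth spelling out---you are using that the Koopman operator $\psi\mapsto\psi\circ F$ is bounded on $L^1$ precisely because $\mathcal{L}1\in L^\infty$, so that $\mathcal{L}$, viewed as the $L^\infty$-adjoint of that Koopman operator, is automatically weak-$*$--weak-$*$ continuous. With that made explicit there is nothing to object to.
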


\begin{Prop}\label{lem:exact}
	Assume that the conditions {\rm (1)--(6)} hold. Let $\nu_0$ be the $F$-invariant probability measure given by Proposition \ref{lem:invariant}.
	Then the map $F$ is exact (and hence strong mixing) with respect to $\nu_0$, that is, 
$\bigcap_{n=0}^\infty \{F^{-n} A:A\in\cB(Y)\}=\{\emptyset, Y\}$, $\nu_0$-a.e.
\end{Prop}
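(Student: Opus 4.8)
The plan is to derive exactness from convergence of the iterates of the Perron--Frobenius (transfer) operator, which is the mechanism the hypotheses (1)--(6) are tailored for. Let $L=L_F$ be the transfer operator of $F$ with respect to the invariant measure $\nu_0$ of Proposition \ref{lem:invariant}, i.e. the $L^1(\nu_0)$-operator determined by $\int_Y (Lg)\,\phi\,\mathrm d\nu_0=\int_Y g\,(\phi\circ F)\,\mathrm d\nu_0$ for all $g\in L^1(\nu_0)$, $\phi\in L^\infty(\nu_0)$. It satisfies $L\mathbbm 1=\mathbbm 1$ (by $F$-invariance of $\nu_0$) and the chain-rule identity $L\bigl(g\cdot(\phi\circ F)\bigr)=\phi\cdot Lg$. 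Now take $A$ in the tail field $\bigcap_{n\geq0}\{F^{-n}B:B\in\cB(Y)\}$ and write $A=F^{-n}A_n$ with $A_n\in\cB(Y)$ for each $n\geq0$; then $\mathbbm 1_A=\mathbbm 1_{A_n}\circ F^n$, so iterating the identity above gives
\[
  L^n\mathbbm 1_A=L^n\bigl(\mathbbm 1\cdot(\mathbbm 1_{A_n}\circ F^n)\bigr)=\mathbbm 1_{A_n}\cdot L^n\mathbbm 1=\mathbbm 1_{A_n},
\]
a $\{0,1\}$-valued function with $\int_Y\mathbbm 1_{A_n}\,\mathrm d\nu_0=\nu_0(A_n)=\nu_0(A)$. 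Hence, once we know that $L^n g\to\bigl(\int_Y g\,\mathrm d\nu_0\bigr)\mathbbm 1$ in $L^1(\nu_0)$ for every $g\in L^1(\nu_0)$, applying this to $g=\mathbbm 1_A$ gives $\mathbbm 1_{A_n}\to\nu_0(A)\mathbbm 1$ in $L^1(\nu_0)$; since $\|\mathbbm 1_{A_n}-\nu_0(A)\mathbbm 1\|_{L^1(\nu_0)}=2\nu_0(A)\bigl(1-\nu_0(A)\bigr)$, this forces $\nu_0(A)\in\{0,1\}$, which is exactly exactness.

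So the work reduces to proving the $L^1$-convergence $L^n g\to(\int_Y g\,\mathrm d\nu_0)\mathbbm 1$. The first ingredient is the \emph{bounded distortion} estimate: conditions (5) and (6) give a constant $D\geq1$ with $|(F^n)'(x)|\leq D\,|(F^n)'(y)|$ whenever $x,y$ lie in a common cylinder $\bigcap_{k=0}^{n-1}F^{-k}P_{i_k}$ on which $F^n$ is one-to-one (the usual telescoping along the orbit, with $\inf|F'|>1$ summing the geometric series of local distortions). The second ingredient is a Lasota--Yorke (Doeblin--Fortet) inequality $\operatorname{Var}_Y(L^n g)\leq C\theta^n\operatorname{Var}_Y(g)+C'\|g\|_{L^1(\nu_0)}$ for some $\theta\in(0,1)$, obtained from bounded distortion together with the Markov property (2) and the \emph{finite-image} condition (4); aperiodicity (3) then supplies the primitivity that makes $1$ a simple eigenvalue of $L$ on $BV(Y)$ and the only one on the unit circle. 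The resulting spectral gap gives exponentially fast convergence $L^n g\to(\int_Y g\,\mathrm d\nu_0)\mathbbm 1$ in variation, hence in $L^1(\nu_0)$, for $g\in BV(Y)$; since $BV(Y)$ is dense in $L^1(\nu_0)$ and $\|L\|_{L^1(\nu_0)\to L^1(\nu_0)}=1$, the convergence extends to all $g\in L^1(\nu_0)$. All of this is carried out, in a setting essentially identical to ours, in Bowen--Series \cite{BS} and Pollicott--Yuri \cite[\S12]{PY}, which I would cite for the details; equivalently one can replace the Banach-space spectral argument by a Birkhoff/Hilbert-metric contraction of $L$ on a cone of positive functions with controlled oscillation on each image interval.

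The step I expect to be the real obstacle is that $F$ has \emph{countably many} branches with no uniform upper bound on $|F'|$, so the classical finite-branch Lasota--Yorke estimate does not apply verbatim: a priori the variation created at the infinitely many endpoints of the monotonicity intervals of $F^n$ could be uncontrolled. This is exactly what condition (4) rescues — although there are infinitely many inverse branches, their images form a \emph{finite} collection, so the boundary contributions to $\operatorname{Var}(Lg)$ are organised by a finite transition structure and stay bounded — while condition (3) makes that finite structure aperiodic. (In the language of Sarig's thermodynamic formalism for countable Markov shifts, this is the finitely primitive situation.) Once the Lasota--Yorke estimate is in place, the remaining steps — the distortion bound, the spectral gap, and the passage to exactness through the identity $L^n\mathbbm 1_A=\mathbbm 1_{A_n}$ — are routine, matching the assertion that the statement is a slight modification of \cite[Theorem 12.5]{PY}.
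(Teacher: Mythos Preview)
Your proposal is correct and follows the standard route---reducing exactness to $L^1$-convergence of transfer-operator iterates, then obtaining that convergence from a Lasota--Yorke inequality and spectral gap---which is precisely the content of the references the paper invokes. The paper itself gives no argument at all: it simply declares the proposition a slight modification of \cite[Theorem 12.5]{PY} and omits the proof, so your write-up is in fact more detailed than what the paper provides, and your identification of the finite-image condition (4) as the device that tames the countably many branches is exactly the point.
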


Let us define a sub-$\sigma$-field $\cF_n^m\subset \cB(Y)$ by $\cF_n^m:=\sigma\{F^{-k}P_i:\text{$n\leq k\leq m$ and $i\geq1$}\}$.
Combining Propositions \ref{lem:invariant} and \ref{lem:exact} with \cite[Theorem 1.(b)]{AN} or \cite[Corollary 4.7.8]{A97}, we obtain the following proposition.

\begin{Prop}\label{lem:Markov-exp-mixing}
Assume that the conditions {\rm (1)--(6)} hold. Let $\nu_0$ be the $F$-invariant probability measure given by Proposition \ref{lem:invariant}. 
Then there exist $C\in(0,\infty)$ and $\theta\in(0,1)$ such that, for any $k,n\geq0$, $A\in\cF_0^k$ and $B\in\cB(Y)$, 
\begin{align}
	\big|\nu_0(A\cap F^{-(k+n)}(B))-\nu_0(A)\nu_0(B)\big|\leq C \theta^n \nu_0(A)\nu_0(B).
\end{align}
\end{Prop}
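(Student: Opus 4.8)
The plan is to deduce the estimate from the spectral gap of the Perron--Frobenius (transfer) operator of $F$, which is the analytic core of the quasi-compactness results quoted in the statement. Let $P$ be the transfer operator of $F$ with respect to $\nu_0$, acting on the space $\mathrm{BV}(Y)$ of functions of bounded variation on $Y$, so that $\int (Pf)\,g\,\rd\nu_0=\int f\,(g\circ F)\,\rd\nu_0$ for every bounded measurable $g$; recall that the density $h_0$ of $\nu_0$ with respect to Lebesgue measure is bounded and bounded away from $0$ on $Y$ (this is part of the Lasota--Yorke argument behind Proposition~\ref{lem:invariant}). For $A,B\in\cB(Y)$ and $m\geq0$ one has the identity
\begin{align}\label{eq:transfer-id}
  \nu_0\big(A\cap F^{-m}(B)\big)=\int_Y P^{m}(\mathbbm{1}_A)\,\mathbbm{1}_B\,\rd\nu_0 .
\end{align}

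First I would establish the spectral gap. Conditions (2), (4), (5) and (6) yield a Lasota--Yorke inequality for $P$ on $\mathrm{BV}(Y)$, so that by the Ionescu-Tulcea--Marinescu theorem $P$ is quasi-compact there; Proposition~\ref{lem:invariant} identifies $1$ as an eigenvalue with eigenfunction $h_0$, and Proposition~\ref{lem:exact} (exactness) forces $1$ to be simple and to be the only spectral value on the unit circle. Hence $P=\Pi+N$, where $\Pi f:=\int f\,\rd\nu_0$ (viewed as a constant function), $\Pi N=N\Pi=0$, and $\|N^{n}\|_{\mathrm{BV}}\leq C\theta^{n}$ for some $C\in(0,\infty)$ and $\theta\in(0,1)$.

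Second I would prove a uniform distortion bound: with $n_0$ the aperiodicity constant of~(3), there is $D\in(0,\infty)$, independent of $k\geq0$ and of $A\in\cF_0^k$, such that
\begin{align}\label{eq:distortion}
  \big\|P^{k+n_0}(\mathbbm{1}_A)\big\|_{\mathrm{BV}}\leq D\,\nu_0(A).
\end{align}
Indeed, each $A\in\cF_0^k$ is a countable disjoint union of $(k+1)$-cylinders $[i_0,\dots,i_k]$, on each of which $F^{k}$ is, by the Markov property~(2), a bijection onto $P_{i_k}$ whose distortion is bounded uniformly in $k$ (by the uniform expansion~(5) and R\'enyi's condition~(6)); summing over the cylinders gives $P^{k}(\mathbbm{1}_A)=\sum_i c_i\,\mathbbm{1}_{P_i}$ with each $c_i$ of controlled oscillation and $\int_{P_i}c_i\,\rd\nu_0=\nu_0(A\cap F^{-k}P_i)$. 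A further $n_0$ applications of $P$ then spread each summand over all of $Y$ (using aperiodicity~(3) and finiteness of the image family~(4)), which dissolves the dependence on the possibly tiny lengths of the $P_i$ and leaves $\big\|P^{k+n_0}(\mathbbm{1}_A)\big\|_{\mathrm{BV}}\leq D\sum_i\nu_0(A\cap F^{-k}P_i)=D\,\nu_0(A)$.

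Finally I would combine the two ingredients: for $n\geq n_0$, write $P^{k+n}(\mathbbm{1}_A)$ as the constant $\nu_0(A)$ plus $N^{n-n_0}\big(P^{k+n_0}(\mathbbm{1}_A)\big)$ (using $\Pi(P^{k+n_0}(\mathbbm{1}_A))=\nu_0(A)$), so that \eqref{eq:transfer-id} with $m=k+n$, together with $\|\cdot\|_\infty\leq\|\cdot\|_{\mathrm{BV}}$ and \eqref{eq:distortion}, yields
\begin{align*}
  \Big|\nu_0\big(A\cap F^{-(k+n)}(B)\big)-\nu_0(A)\nu_0(B)\Big|\leq C\theta^{n-n_0}D\,\nu_0(A)\,\nu_0(B);
\end{align*}
absorbing $\theta^{-n_0}$ into the constant gives the claim for large $n$, and the remaining bookkeeping for small $n$ is exactly what \cite[Theorem 1.(b)]{AN} (or \cite[Corollary 4.7.8]{A97}) packages once conditions (1)--(6) and the conclusions of Propositions~\ref{lem:invariant} and~\ref{lem:exact} are in hand. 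I expect the uniform distortion estimate~\eqref{eq:distortion} --- controlling the countable and possibly unbounded geometry of $\{P_i\}$ through the finite-image and aperiodicity hypotheses --- to be the only genuinely delicate ingredient; everything else is the standard Lasota--Yorke / Ionescu-Tulcea--Marinescu package.
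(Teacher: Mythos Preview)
Your proposal is correct, and in fact it supplies considerably more detail than the paper itself: the paper gives no argument at all for this proposition, simply stating that it follows by combining Propositions~\ref{lem:invariant} and~\ref{lem:exact} with \cite[Theorem~1.(b)]{AN} or \cite[Corollary~4.7.8]{A97}. Your spectral-gap-plus-uniform-distortion outline is precisely what those cited results package, and you have correctly identified the only delicate point (controlling the BV norm of $P^{k+n_0}(\mathbbm{1}_A)$ uniformly over cylinder sets in the presence of countably many branches, via the finite-image and aperiodicity hypotheses).
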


%%%%%%%%%%%%%%%%%%%%%%%%%%%%%%%%%%%%%%%%%%%%%%%%%%%%%
\section{Functional convergence to $\alpha$-stable L\'evy process}\label{sec:stable}
%%%%%%%%%%%%%%%%%%%%%%%%%%%%%%%%%%%%%%%%%%%%%%%%%%%%%

Following Tyran-Kami\'nska \cite{Ty10b}, we will explain a functional limit theorem for the processes which have stationary increments and local dependence. 

Let $d\geq1$ be a positive integer, and $(Z_n)_{n\geq1}$ a strictly stationary sequence of $\bR^d$-valued random variables. 
Set $\bS^{d-1}:=\{x\in \bR^d:|x|=1\}$, where $|\cdot|$ denotes the Euclid norm on $\bR^d$. We will denote by $\cP_{\bS^{d-1}}$ the class of probability measures on $\bS^{d-1}$. We endow $\cP_{\bS^{d-1}}$ with the Polish topology of weak convergence.

\begin{Ass}[Regular variation]\label{ass:stationary-reg}
The random variable $Z_1$ is \emph{regular varying} with index $\alpha\in(0,1)$ and spectral measure $\rho \in \cP_{\bS^{d-1}}$, that is, it holds that
\begin{align*}
  &\bP\Big[|Z_1|>\lambda r\;\Big|\;|Z_1|>r \Big]
  \underset{r\to\infty}{\to}
  \lambda^{-\alpha}, \;\;\text{in $\bR$, for $\lambda>0$},	
  \\ 
  &\text{and}\quad
  \bP\bigg[\frac{Z_1}{|Z_1|}\in \cdot\;\bigg|\;|Z_1|>r \bigg]
  \underset{r\to\infty}{\to}
  \rho(\cdot),
  \;\;\text{in $\cP_{\bS^{d-1}}$},	
\end{align*}
where $\bP[\cdot|\cdot]$ denotes the conditional probability.
\end{Ass}

We will assume Assumption \ref{ass:stationary-reg} from now on. For $n\geq1$, set
\begin{align}
  b_n:=\frac{1}{\Gamma(1-\alpha)\bP[|Z_1|>n]}\in(0,\infty).	
\end{align}
We define a sub-$\sigma$-field $\cF_n^m\subset\cB(\bR^d)$ by $\cF_n^m:=\sigma\{Z_k:n\leq k \leq m\}$. For $n\geq1$, set
\begin{align}
  \phi_0(n)
  :=
  \sup
 \{|\bP(A \cap B)-\bP(A)\bP(B)|
   :
k\geq0,\;A\in\cF_{0}^k,\;
B\in\cF_{k+n}^\infty
\}.      	
\end{align}	
\begin{Ass}[Local dependence]\label{ass:stationary-mixing}
	For any $\varepsilon>0$, there exist $\bN$-valued sequences $(r_n)_{n\geq1}$ and $(s_n)_{n\geq1}$ such that
\begin{align*}
	s_n, \;\frac{r_n}{s_n},\; \frac{b_n}{r_n}\underset{n\to\infty}{\to}\infty
	\quad
	\text{and}
	\quad\frac{b_n}{r_n}\phi_0(s_n)\underset{n\to\infty}{\to}0,
\end{align*}
and
\begin{align*}
 \bP\Big[ \max_{2\leq k \leq r_n} |Z_k|>\varepsilon n 
       \;\Big|\; |Z_1|>\varepsilon n\Big]
       \underset{n\to\infty}{\to}0.	
\end{align*}
\end{Ass}

\begin{Rem}
Suppose that Assumption \ref{ass:stationary-reg} is satisfied. Furthermore, assume that $(Z_n)_{n\geq1}$ is exponentially continued fraction mixing, that is, 
there exist some constants $C\in(0,\infty)$ and $\theta\in(0,1)$ such that, for any $k,n\geq0$, $A\in\cF_0^k$ and $B\in\cF_{k+n}^\infty$,
\begin{align}
 |\bP(A\cap B)-\bP(A)\bP(B)|
 \leq
 C\theta^n
 \bP(A)\bP(B).	
\end{align}
Then Assumption \ref{ass:stationary-mixing} is satisfied. See Tyran-Kami\'nska \cite[Subsection 4.1]{Ty10a} for the details.
\end{Rem}

For $n\geq1$, we define an $\bR^d$-valued c\`adl\`ag process $X_n=(X_n(t):t\geq0)$ by
\begin{align}
  X_n(t):=\frac{1}{n}\sum_{k=1}^{\lfloor b_nt\rfloor}Z_k, 
  \quad t\geq0.
\end{align}
We will denote by $X_{(\alpha,\rho)}=(X_{(\alpha,\rho)}(t):t\geq0)$ an $\bR^d$-valued $\alpha$-stable L\'evy process with  L\'evy measure $\Pi_{(\alpha,\rho)}$ given by 
\begin{align}\label{Levy-measure}
	\Pi_{(\alpha,\rho)}(A)
	:=
	\int_0^\infty\rd r
	\int_{\bS^{d-1}}\rho(\rd x)
	 \mathbbm{1}_A(rx)\frac{\alpha r^{-\alpha-1}}{\Gamma(1-\alpha)},
	\quad
	A\in\cB(\bR^d). 
\end{align}
The following proposition is a slight modification of Tyran-Kami\'nska \cite[Theorem 1.1]{Ty10b}. So we omit its proof. 
\begin{Prop}[Functional convergence to stable process]\label{prop:stable}
Suppose that Assumptions \ref{ass:stationary-reg} and \ref{ass:stationary-mixing} hold. Then, it holds that 
 \begin{align}
 	X_n
 	\underset{n\to\infty}{\dto}
 	X_{(\alpha,\rho)},
 	\quad
 	\text{in $D([0,\infty),\bR^d)$.}
 \end{align}
\end{Prop}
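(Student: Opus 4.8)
The plan is to deduce the proposition from Tyran-Kami\'nska's functional stable limit theorem \cite[Theorem 1.1]{Ty10b} after a harmless relabeling of the norming constants, and to indicate the internal structure of that argument so one can see what the ``slight modification'' entails. First I would note that, by the definition of $b_n$, one has $b_n\bP[|Z_1|>n]=1/\Gamma(1-\alpha)$; reading the role of the time-norming as $b_n$ and that of the space-norming as $n$, Assumptions \ref{ass:stationary-reg} and \ref{ass:stationary-mixing} become a verbatim transcription of the hypotheses of \cite[Theorem 1.1]{Ty10b}: regular variation of $Z_1$ with index $\alpha$ and spectral measure $\rho$, the mixing condition $\frac{b_n}{r_n}\phi_0(s_n)\to0$ with $s_n,r_n/s_n,b_n/r_n\to\infty$, and the anti-clustering condition on $\max_{2\le k\le r_n}|Z_k|$. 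Moreover, since $X_{(\alpha,\rho)}$ is stochastically continuous (it is a L\'evy process), it is routine, via the characterisation \eqref{conv:lambda} of $J_1$-convergence on $[0,\infty)$ in terms of restrictions to $[0,t_0]$, that $X_n\dto X_{(\alpha,\rho)}$ in $D([0,\infty),\bR^d)$ follows once we have $X_n\dto X_{(\alpha,\rho)}$ in $D([0,T],\bR^d)$ for each fixed $T>0$. So it suffices to treat a fixed compact interval $[0,T]$.

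On $[0,T]$ the proof proceeds through a point-process representation of the normalised jumps, and I would follow the route of \cite{Ty10b}. Partition $\{1,\dots,\lfloor b_nT\rfloor\}$ into consecutive big blocks of length $r_n$ separated by small blocks of length $s_n$, with $(r_n),(s_n)$ as in Assumption \ref{ass:stationary-mixing}. The anti-clustering condition forces each big block to contain, with probability tending to one, at most one summand of size comparable to $n$, so a big-block sum behaves like a single regularly varying vector; the mixing bound $\frac{b_n}{r_n}\phi_0(s_n)\to0$ makes distinct big blocks asymptotically independent; and stationarity together with $r_n/s_n\to\infty$ renders the small-block contributions uniformly negligible. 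Combining these ingredients, the point process $N_n:=\sum_{k\ge1}\delta_{(k/b_n,\,Z_k/n)}$, regarded on $[0,T]\times\{x\in\bR^d:|x|>\delta\}$ for fixed $\delta>0$, converges weakly to a Poisson random measure with intensity $\mathrm{Leb}\otimes\big(\mathbbm{1}_{\{|x|>\delta\}}\,\Pi_{(\alpha,\rho)}\big)$, with $\Pi_{(\alpha,\rho)}$ as in \eqref{Levy-measure}.

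Finally I would transfer this to path space. Applying the summation functional $m\mapsto\big(\sum_{t_i\le t}x_i:t\le T\big)$, which is a.s.\ continuous at the limiting Poisson measure in the $J_1$-topology, and using that no centering is needed since $\alpha\in(0,1)$, one obtains that the truncated process $X_n^{(\delta)}(t):=\frac1n\sum_{k\le b_nt,\,|Z_k|>\delta n}Z_k$ converges in $D([0,T],\bR^d)$ to the L\'evy process with L\'evy measure $\mathbbm{1}_{\{|x|>\delta\}}\,\Pi_{(\alpha,\rho)}$. Letting $\delta\downarrow0$: the small-jump part of $X_{(\alpha,\rho)}$ is negligible because $\int_{|x|\le\delta}|x|\,\Pi_{(\alpha,\rho)}(\rd x)<\infty$ for $\alpha<1$ and tends to $0$ as $\delta\downarrow0$, while on the discrete side the matching estimate $\limsup_{n\to\infty}\bE\big[\sup_{t\le T}\big|X_n(t)-X_n^{(\delta)}(t)\big|\big]\to0$ as $\delta\downarrow0$ follows from stationarity, Assumption \ref{ass:stationary-reg}, and Karamata's theorem (cf.\ \cite[Chapter 1]{BGT}), again with no centering term. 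A standard convergence-together argument then yields $X_n\dto X_{(\alpha,\rho)}$ in $D([0,T],\bR^d)$, hence in $D([0,\infty),\bR^d)$. The hard part is the block-decoupling under $\phi_0$-mixing combined with the anti-clustering step, and the uniform-in-$n$ negligibility of the $\delta$-small summands (which is where $\alpha\in(0,1)$ is essential); but these are carried out in \cite{Ty10b} (see also \cite[Subsection 4.1]{Ty10a} for the passage from exponential continued-fraction mixing to Assumption \ref{ass:stationary-mixing}), and since our assumptions are merely a relabeling of those in \cite{Ty10b}, the required modification is indeed slight, so I would not reproduce the details.
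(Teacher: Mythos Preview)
Your proposal is correct and matches the paper's approach exactly: the paper simply states that Proposition~\ref{prop:stable} is a slight modification of \cite[Theorem 1.1]{Ty10b} and omits the proof, and you likewise reduce to that theorem via a relabeling of the norming constants. Your additional sketch of the internal block-decoupling/point-process argument of \cite{Ty10b} goes beyond what the paper provides, but the underlying route is the same.
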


{\footnotesize

}

\begin{thebibliography}{10}

\bibitem{A77}
J.~Aaronson.
\newblock On the ergodic theory of non-integrable functions and infinite
  measure spaces.
\newblock {\em Israel J. Math.}, 27(2):163--173, 1977.

\bibitem{A81}
J.~Aaronson.
\newblock The asymptotic distributional behaviour of transformations preserving
  infinite measures.
\newblock {\em J. Analyse Math.}, 39:203--234, 1981.

\bibitem{A86}
J.~Aaronson.
\newblock Random {$f$}-expansions.
\newblock {\em Ann. Probab.}, 14(3):1037--1057, 1986.

\bibitem{A97}
J.~Aaronson.
\newblock {\em An introduction to infinite ergodic theory}, volume~50 of {\em
  Mathematical Surveys and Monographs}.
\newblock American Mathematical Society, Providence, RI, 1997.

\bibitem{AN}
J.~Aaronson and H.~Nakada.
\newblock On the mixing coefficients of piecewise monotonic maps.
\newblock {\em Israel J. Math.}, 148:1--10, 2005.
\newblock Probability in mathematics.

\bibitem{ATZ}
J.~Aaronson, M.~Thaler, and R.~Zweim\"uller.
\newblock Occupation times of sets of infinite measure for ergodic
  transformations.
\newblock {\em Ergodic Theory Dynam. Systems}, 25(4):959--976, 2005.

\bibitem{Adler}
R.~L. Adler.
\newblock Continued fractions and {B}ernoulli trials.
\newblock In {\em Ergodic Theory {\rm (J. Moser, E. Phillios and S. Varadhan,
  eds.)}}, pages 111--120. Courant Inst. Math.Sci, New York, 1975.

\bibitem{AW}
R.~L. Adler and B.~Weiss.
\newblock The ergodic infinite measure preserving transformation of {B}oole.
\newblock {\em Israel J. Math.}, 16:263--278, 1973.

\bibitem{BPYa}
M.~Barlow, J.~Pitman, and M.~Yor.
\newblock On {W}alsh's {B}rownian motions.
\newblock In {\em S\'eminaire de {P}robabilit\'es, {XXIII}}, volume 1372 of
  {\em Lecture Notes in Math.}, pages 275--293. Springer, Berlin, 1989.

\bibitem{BPY}
M.~Barlow, J.~Pitman, and M.~Yor.
\newblock Une extension multidimensionnelle de la loi de l'arc sinus.
\newblock In {\em S\'eminaire de {P}robabilit\'es, {XXIII}}, volume 1372 of
  {\em Lecture Notes in Math.}, pages 294--314. Springer, Berlin, 1989.

\bibitem{B71}
N.~H. Bingham.
\newblock Limit theorems for occupation times of {M}arkov processes.
\newblock {\em Z. Wahrscheinlichkeitstheorie und Verw. Gebiete}, 17:1--22,
  1971.

\bibitem{BGT}
N.~H. Bingham, C.~M. Goldie, and J.~L. Teugels.
\newblock {\em Regular variation}, volume~27 of {\em Encyclopedia of
  Mathematics and its Applications}.
\newblock Cambridge University Press, Cambridge, 1987.

\bibitem{Bo}
R.~Bowen.
\newblock Invariant measures for {M}arkov maps of the interval.
\newblock {\em Comm. Math. Phys.}, 69(1):1--17, 1979.
\newblock With an afterword by Roy L. Adler and additional comments by Caroline
  Series.

\bibitem{BS}
R.~Bowen and C.~Series.
\newblock Markov maps associated with {F}uchsian groups.
\newblock {\em Inst. Hautes \'Etudes Sci. Publ. Math.}, (50):153--170, 1979.

\bibitem{DK}
D.~A. Darling and M.~Kac.
\newblock On occupation times for {M}arkoff processes.
\newblock {\em Trans. Amer. Math. Soc.}, 84:444--458, 1957.

\bibitem{DRVY}
C.~Donati-Martin, B.~Roynette, P.~Vallois, and M.~Yor.
\newblock On constants related to the choice of the local time at 0, and the
  corresponding {I}t\^{o} measure for {B}essel processes with dimension
  {$d=2(1-\alpha),\ 0<\alpha<1$}.
\newblock {\em Studia Sci. Math. Hungar.}, 45(2):207--221, 2008.

\bibitem{Don}
M.~D. Donsker.
\newblock An invariance principle for certain probability limit theorems.
\newblock {\em Mem. Amer. Math. Soc.,}, No. 6:12, 1951.

\bibitem{Dyn61}
E.~B. Dynkin.
\newblock Some limit theorems for sums of independent random variables with
  infinite mathematical expectations.
\newblock In {\em Select. {T}ransl. {M}ath. {S}tatist. and {P}robability,
  {V}ol. 1}, pages 171--189. Inst. Math. Statist. and Amer. Math. Soc.,
  Providence, R.I., 1961.

\bibitem{EK}
S.~N. Ethier and T.~G. Kurtz.
\newblock {\em Markov processes: Characterization and convergence}.
\newblock Wiley Series in Probability and Mathematical Statistics: Probability
  and Mathematical Statistics. John Wiley \& Sons, Inc., New York, 1986.

\bibitem{FKY}
E.~Fujihara, Y.~Kawamura, and Y.~Yano.
\newblock Functional limit theorems for occupation times of {L}amperti's
  stochastic processes in discrete time.
\newblock {\em J. Math. Kyoto Univ.}, 47(2):429--440, 2007.

\bibitem{G79}
R.~K. Getoor.
\newblock Excursions of a {M}arkov process.
\newblock {\em Ann. Probab.}, 7(2):244--266, 1979.

\bibitem{HS}
J.~M. Harrison and L.~A. Shepp.
\newblock On skew {B}rownian motion.
\newblock {\em Ann. Probab.}, 9(2):309--313, 1981.

\bibitem{I15}
K.~It\^o.
\newblock {\em Poisson point processes and their application to {M}arkov
  processes}.
\newblock Springer Briefs in Probability and Mathematical Statistics. Springer,
  Singapore, 2015.
\newblock With a foreword by Shinzo Watanabe and Ichiro Shigekawa.

\bibitem{JS}
J.~Jacod and A.~N. Shiryaev.
\newblock {\em Limit theorems for stochastic processes}, volume 288 of {\em
  Grundlehren der Mathematischen Wissenschaften [Fundamental Principles of
  Mathematical Sciences]}.
\newblock Springer-Verlag, Berlin, second edition, 2003.

\bibitem{Kal02}
O.~Kallenberg.
\newblock {\em Foundations of modern probability}.
\newblock Probability and its Applications (New York). Springer-Verlag, New
  York, second edition, 2002.

\bibitem{KaMc}
S.~Karlin and J.~McGregor.
\newblock Occupation time laws for birth and death processes.
\newblock In {\em Proc. 4th {B}erkeley {S}ympos. {M}ath. {S}tatist. and
  {P}rob., {V}ol. {II}}, pages 249--272. Univ. California Press, Berkeley,
  Calif., 1961.

\bibitem{Kas75}
Y.~Kasahara.
\newblock Spectral theory of generalized second order differential operators
  and its applications to {M}arkov processes.
\newblock {\em Japan. J. Math. (N.S.)}, 1(1):67--84, 1975/76.

\bibitem{Kas76}
Y.~Kasahara.
\newblock Limit theorems of occuptation times for {M}arkov processes.
\newblock {\em Publ. Res. Inst. Math. Sci.}, 12(3):801--818, 1976/77.

\bibitem{KZ}
D.~Kocheim and R.~Zweim\"{u}ller.
\newblock A joint limit theorem for compactly regenerative ergodic
  transformations.
\newblock {\em Studia Math.}, 203(1):33--45, 2011.

\bibitem{La}
J.~Lamperti.
\newblock An occupation time theorem for a class of stochastic processes.
\newblock {\em Trans. Amer. Math. Soc.}, 88:380--387, 1958.

\bibitem{La-b}
J.~Lamperti.
\newblock Some limit theorems for stochastic processes.
\newblock {\em J. Math. Mech.}, 7:433--448, 1958.

\bibitem{La62}
J.~Lamperti.
\newblock An invariance principle in renewal theory.
\newblock {\em Ann. Math. Statist.}, 33:685--696, 1962.

\bibitem{Lej}
A.~Lejay.
\newblock On the constructions of the skew {B}rownian motion.
\newblock {\em Probab. Surv.}, 3:413--466, 2006.

\bibitem{Le}
P.~L\'evy.
\newblock Sur certains processus stochastiques homog\`enes.
\newblock {\em Compositio Math.}, 7:283--339, 1939.

\bibitem{Le48}
P.~L\'{e}vy.
\newblock {\em Processus {S}tochastiques et {M}ouvement {B}rownien. {S}uivi
  d'une note de {M}. {L}o\`eve}.
\newblock Gauthier-Villars, Paris, 1948.

\bibitem{M}
P.~Manneville.
\newblock Intermittency, self-similarity and {$1/f$}\ spectrum in dissipative
  dynamical systems.
\newblock {\em J. Physique}, 41(11):1235--1243, 1980.

\bibitem{OSb}
T.~Owada and G.~Samorodnitsky.
\newblock Functional central limit theorem for heavy tailed stationary
  infinitely divisible processes generated by conservative flows.
\newblock {\em Ann. Probab.}, 43(1):240--285, 2015.

\bibitem{PY}
M.~Pollicott and M.~Yuri.
\newblock {\em Dynamical systems and ergodic theory}, volume~40 of {\em London
  Mathematical Society Student Texts}.
\newblock Cambridge University Press, Cambridge, 1998.

\bibitem{PM}
Y.~Pomeau and P.~Manneville.
\newblock Intermittent transition to turbulence in dissipative dynamical
  systems.
\newblock {\em Comm. Math. Phys.}, 74(2):189--197, 1980.

\bibitem{Port}
S.~C. Port.
\newblock Some theorems on functionals of {M}arkov chains.
\newblock {\em Ann. Math. Statist.}, 35:1275--1290, 1964.

\bibitem{RY}
D.~Revuz and M.~Yor.
\newblock {\em Continuous martingales and {B}rownian motion}, volume 293 of
  {\em Grundlehren der Mathematischen Wissenschaften [Fundamental Principles of
  Mathematical Sciences]}.
\newblock Springer-Verlag, Berlin, third edition, 1999.

\bibitem{RW2}
L.~C.~G. Rogers and D.~Williams.
\newblock {\em Diffusions, {M}arkov processes, and martingales. {V}ol. 2. It\^o
  calculus.}
\newblock Cambridge Mathematical Library. Cambridge University Press,
  Cambridge, 2000.
\newblock Reprint of the second (1994) edition.

\bibitem{S97}
P.~Salminen.
\newblock On last exit decompositions of linear diffusions.
\newblock {\em Studia Sci. Math. Hungar.}, 33(1-3):251--262, 1997.

\bibitem{SVY}
P.~Salminen, P.~Vallois, and M.~Yor.
\newblock On the excursion theory for linear diffusions.
\newblock {\em Jpn. J. Math.}, 2(1):97--127, 2007.

\bibitem{SY}
T.~Sera and K.~Yano.
\newblock Multiray generalization of the arcsine laws for occupation times of
  infinite ergodic transformations.
\newblock {\em Trans. Amer. Math. Soc.}, to appear.

\bibitem{St}
C.~Stone.
\newblock Limit theorems for random walks, birth and death processes, and
  diffusion processes.
\newblock {\em Illinois J. Math.}, 7:638--660, 1963.

\bibitem{T80}
M.~Thaler.
\newblock Estimates of the invariant densities of endomorphisms with
  indifferent fixed points.
\newblock {\em Israel J. Math.}, 37(4):303--314, 1980.

\bibitem{T83}
M.~Thaler.
\newblock Transformations on {$[0,\,1]$}\ with infinite invariant measures.
\newblock {\em Israel J. Math.}, 46(1-2):67--96, 1983.

\bibitem{T98}
M.~Thaler.
\newblock The {D}ynkin-{L}amperti arc-sine laws for measure preserving
  transformations.
\newblock {\em Trans. Amer. Math. Soc.}, 350(11):4593--4607, 1998.

\bibitem{T02}
M.~Thaler.
\newblock A limit theorem for sojourns near indifferent fixed points of
  one-dimensional maps.
\newblock {\em Ergodic Theory Dynam. Systems}, 22(4):1289--1312, 2002.

\bibitem{TZ}
M.~Thaler and R.~Zweim\"uller.
\newblock Distributional limit theorems in infinite ergodic theory.
\newblock {\em Probab. Theory Related Fields}, 135(1):15--52, 2006.

\bibitem{Ty10a}
M.~Tyran-Kami\'nska.
\newblock Weak convergence to {L}\'evy stable processes in dynamical systems.
\newblock {\em Stoch. Dyn.}, 10(2):263--289, 2010.


\bibitem{Ty10b}
M.~Tyran-Kami\'nska.
\newblock Convergence to {L}\'evy stable processes under some weak dependence
  conditions.
\newblock {\em Stochastic Process. Appl.}, 120(9):1629--1650, 2010.


\bibitem{W95}
S.~Watanabe.
\newblock Generalized arc-sine laws for one-dimensional diffusion processes and
  random walks.
\newblock In {\em Stochastic analysis ({I}thaca, {NY}, 1993)}, volume~57 of
  {\em Proc. Sympos. Pure Math.}, pages 157--172. Amer. Math. Soc., Providence,
  RI, 1995.

\bibitem{Wi69}
D.~Williams.
\newblock Markov properties of {B}rownian local time.
\newblock {\em Bull. Amer. Math. Soc.}, 75:1035--1036, 1969.

\bibitem{Y06}
Y.~Yano.
\newblock On the occupation time on the half line of pinned diffusion
  processes.
\newblock {\em Publ. Res. Inst. Math. Sci.}, 42(3):787--802, 2006.

\bibitem{Y17}
Y.~Yano.
\newblock On the joint law of the occupation times for a diffusion process on
  multiray.
\newblock {\em J. Theoret. Probab.}, 30(2):490--509, 2017.

\bibitem{Z03}
R.~Zweim\"uller.
\newblock Stable limits for probability preserving maps with indifferent fixed
  points.
\newblock {\em Stoch. Dyn.}, 3(1):83--99, 2003.

\bibitem{Z07}
R.~Zweim\"uller.
\newblock Infinite measure preserving transformations with compact first
  regeneration.
\newblock {\em J. Anal. Math.}, 103:93--131, 2007.

\bibitem{Z07b}
R.~Zweim\"uller.
\newblock Mixing limit theorems for ergodic transformations.
\newblock {\em J. Theoret. Probab.}, 20(4):1059--1071, 2007.

\end{thebibliography}
\end{document}